\NeedsTeXFormat{LaTeX2e}
\documentclass[10pt,twoside,a4paper,reqno,pdftex,final]{amsart}
\usepackage[latin1]{inputenc}

\usepackage[centertags]{amsmath}

\usepackage{amsthm}
\usepackage{amssymb}
\usepackage{amsfonts,a4}
\usepackage{amsxtra}
\usepackage{amscd}
\usepackage{mathabx} 
\usepackage{esint}

\usepackage{tikz}
 \usepackage{pgfplots}

\usepackage[english]{babel}

\usepackage{todonotes}

\usepackage{algorithm}
\usepackage{algorithmic}

\usepackage[mathscr]{eucal}	

\usepackage{comment}

\usepackage{bbm}
\usepackage{enumerate}
\usepackage{color}
\usepackage{ifpdf}
\ifpdf
  \usepackage[bookmarks=true]{hyperref}
  \usepackage{graphicx}
  \DeclareGraphicsExtensions{.pdf,.png,.jpg}
  \graphicspath{{./pic/}}

\else
  \usepackage[xdvi,dvips]{graphicx}
  \DeclareGraphicsExtensions{.ps,.eps}
\fi

\usepackage{graphicx} 
\graphicspath{{./plots/},{./plots/chen_feng_example/astfem/},
  {./plots/chen_feng_example/classic_alg/}}

\usepackage{xspace} %
\usepackage[notcite,notref]{showkeys}

\usepackage[scrtime]{prelim2e}

\newcommand{\dt}{\,{\rm d}t}


\newtheorem{thm}{Theorem}
\newtheorem{cor}[thm]{Corollary}
\newtheorem{lem}[thm]{Lemma}
\newtheorem{prop}[thm]{Proposition}

\newtheorem{rem}[thm]{Remark}
\newtheorem{ass}[thm]{Assumption}

\numberwithin{equation}{section}

\definecolor{applegreen}{rgb}{0.55, 0.71, 0.0}

\newcommand{\ADAPTINIT}[1]{\texttt{ADAPT\_INIT}\ensuremath{{#1}}}
\newcommand{\Am}{\ensuremath{\mathbf{A}}} 
\newcommand{\abs}[1]{\ensuremath{\left|#1\right|}}

\renewcommand{\atop}[2]{\ensuremath{\genfrac{}{}{0pt}{}{#1}{#2}}}

\newcommand{\ASTFEM}{\text{ASTFEM}\xspace}
\DeclareMathOperator*{\argmax}{\arg\!\max}


\newcommand{\B}{\mathcal{B}}
\newcommand{\bilin}[3][]{\ensuremath{\B_{#1}[#2,\,#3]}}


\newcommand{\COARSEN}[1][(U,\grid)]{\texttt{COARSEN}\ensuremath{{#1}}}
\newcommand{\CONSISTENCY}[1][(f,t,\tau)]{\texttt{CONSISTENCY}\ensuremath{{#1}}}


\newcommand{\definedas}{\mathrel{:=}}

\newcommand{\dual}[2]{\ensuremath{\langle #1,\,#2\rangle}}
\newcommand{\Dt}{\ensuremath{\partial _t}}

\newcommand{\dGs}[1][s]{{\rm dG(\ensuremath{#1})}\xspace}
\newcommand{\DUNE}{\textsf{DUNE}\xspace}

\DeclareMathOperator{\divo}{div}


\newcommand{\elm}{\ensuremath{E}\xspace}
\newcommand{\enorm}[2][\Omega]{\left|\negthinspace\left|\negthinspace\left|{#2}%
                      \right|\negthinspace\right|\negthinspace\right|_{#1}}
\newcommand{\Enorm}[2][\Omega]{|\negthinspace|\negthinspace|{#2}%
                      |\negthinspace|\negthinspace|_{#1}}

\newcommand{\est}{\mathcal{E}}

\newcommand{\Est}[3]{\ensuremath{\mathcal{E}_{#1}^{#2}}{(#3)}}
\newcommand{\Einit}[1][v,\grid]{\Est{0}{2}{#1}}
\newcommand{\Econs}[1][f,t,\tau]{\Est{f}{2}{#1}}
\newcommand{\Etime}[1][v^+,v^-,\tau,\grid]{\Est{\tau}{2}{#1}}
\newcommand{\Espace}[1][V,v^-,t,\tau,\bar f,\grid,E]{\Est{\grid}{2}{#1}}
\newcommand{\Ecoarse}[1][v^-, \tau,\grid, E]{\Est{c}{2}{#1}}
\newcommand{\Etc}[1][v^+,v^-,\tau]{\Est{c\tau}{2}{#1}}
\newcommand{\Estar}[1][V,v^-,t,\tau,\grid,E]{\Est{*}{2}{#1}}
\newcommand{\E}[1]{\ensuremath{\mathcal{E}_{#1}}}



\newcommand{\grids}{\ensuremath{\mathbb{G}}\xspace}
\newcommand{\grid}{\mathcal{G}}
\newcommand{\gridn}[1][n]{\grid_{#1}}

\newcommand{\gridinit}{\ensuremath{\grid_\textrm{init}}}
\newcommand{\gridin}{\grid_{\textrm{in}}}

\newcommand{\gridold}{\grid_{\textrm{old}}}

\newcommand{\helm}[1][\ell]{\ensuremath{h_{\elm}}}



\newcommand{\ie}{\hbox{i.\,e.},\xspace}

\newcommand{\I}{\ensuremath{\mathbb{I}}}

\newcommand{\jump}[1]{\left[\negthinspace\left[{#1}\right]\negthinspace\right]}


\renewcommand{\L}{\ensuremath{\mathcal{L}}}


\newcommand{\marked}{\mathcal{M}}

\newcommand{\MARKREFINE}[1]{\texttt{MARK\_REFINE}\ensuremath{{#1}}\xspace}


\newcommand{\N}{\ensuremath{\mathbb{N}}}

\newcommand{\normal}{\ensuremath{\vec{n}}}

\newcommand{\norm}[2][\Omega]{\ensuremath{\|#2\|_{#1}}}


\DeclareMathOperator{\osc}{osc}
\newcommand{\oscG}[1][\grid]{\osc_{#1}}

\providecommand{\Poincare}{{Poincar{\'e}}\xspace}

\newcommand{\Ps}[1][s]{\ensuremath{\mathbb{P}_{#1}}}
\renewcommand{\paragraph}[1]{\noindent\raisebox{0pt}[10pt][0pt]{\textbf{#1.}}}

\newcommand{\Pron}[1][n]{\ensuremath{\Pi_{#1}}}
\newcommand{\ProG}[1][\grid]{\ensuremath{\Pi_{#1}}}



\newcommand{\R}{\ensuremath{\mathbb{R}}}
\DeclareMathOperator{\Res}{Res}
\newcommand{\Resh}{\ensuremath{\Res_h}}
\newcommand{\Rest}{\ensuremath{\Res_\tau}}

\newcommand{\REFINE}{\texttt{REFINE}}


\newcommand{\scp}[3][\Omega]{\ensuremath{\left\langle #2,\,#3\right\rangle}_{#1}}
\newcommand{\Scp}[3][\Omega]{\ensuremath{\langle #2,\,#3\rangle}_{#1}}

\newcommand{\side}{\ensuremath{S}\xspace}

\newcommand{\step}[1]{\noindent\raisebox{1.5pt}[10pt][0pt]{\tiny\framebox{$#1$}}\xspace}

\newcommand{\SOLVE}[1]{\texttt{SOLVE}\ensuremath{{#1}}}


\newcommand{\TAFEM}{\text{TAFEM}\xspace}
\newcommand{\TOLFIND}{\texttt{TOLFIND}}

\newcommand{\tn}[1][n]{\ensuremath{t_{#1}}}
\newcommand{\tauin}{\tau^{\textrm{in}}}
\newcommand{\TOL}{\ensuremath{\text{\texttt{TOL}}}}
\newcommand{\tol}{\ensuremath{\text{\texttt{tol}}}}

\newcommand{\TOLinit}{\ensuremath{\text{\texttt{TOL}}_0}}
\newcommand{\TOLcons}{\ensuremath{\text{\texttt{TOL}}_f}}
\newcommand{\tolcons}{\ensuremath{\text{\texttt{tol}}_f}}
\newcommand{\TOLst}{\ensuremath{\text{\texttt{TOL}}_{\grid\tau}}}
\newcommand{\tolst}{\ensuremath{\text{\texttt{tol}}_{\grid\tau}}}

\newcommand{\STADAPTATION}[1][(U,\tau,\bar f,\grid)]{\texttt{ST\_ADAPTATION}\ensuremath{{#1}}}


\newcommand{\UIn}[1][|I_n]{\ensuremath{U_{#1}}}
\newcommand{\Un}[1][n]{\ensuremath{U_{#1}}}
\newcommand{\U}{\ensuremath{\mathbb{U}}}
\newcommand{\hU}{\ensuremath{\mathcal{U}}}


\renewcommand{\vec}[1]{\ensuremath{\boldsymbol{#1}}}

\newcommand{\V}{\ensuremath{\mathbb{V}}}
\newcommand{\Vn}[1][n]{\ensuremath{\mathbb{V}_{#1}}}

\newcommand{\VG}[1][\grid]{\V(#1)}


\newcommand{\W}[1][0,T]{\ensuremath{\mathbb{W}{(#1)}}}

\newcommand{\Wnorm}[2][0,T]{\|{#2}\|_{\W[#1]}}





\algsetup{indent=2em}

\hyphenation{ }

\usepackage{subcaption}
\begin{document}


\title[A convergent adaptive dG(s) fem]{
  A convergent time-space adaptive dG(s) finite element method for parabolic problems motivated by equal error distribution}

\author[F.\,D. Gaspoz]{Fernando D.\ Gaspoz}
\address{Fernando D.\ Gaspoz, Institut f\"ur Angewandte Analysis und
  Numerische Simulation, Fachbereich Mathematik,
  Universit\"at Stuttgart,
  Pfaffenwaldring 57, D-70569 Stutt\-gart, Germany }
  \urladdr{www.ians.uni-stuttgart.de/nmh/}
\email{fernando.gaspoz@ians.uni-stuttgart.de}

\author[Ch.~Kreuzer]{Christian Kreuzer}
\address{Christian Kreuzer,
 Fakult\"at f\"ur Mathematik,
 Ruhr-Universit\"at Bochum,
 Universit\"atstrasse 150, D-44801 Bochum, Germany
 }%
\urladdr{http://www.ruhr-uni-bochum.de/ffm/Lehrstuehle/Kreuzer/index.html}
\email{christan.kreuzer@rub.de}

\author[K.\,G.~Siebert]{Kunibert~G.~Siebert}
\address{Kunibert~G.~Siebert, Institut f\"ur Angewandte Analysis und
  Numerische Simulation, Fachbereich Mathematik,
  Universit\"at Stuttgart,
  Pfaffenwaldring 57, D-70569 Stutt\-gart, Germany }
\urladdr{www.ians.uni-stuttgart.de/nmh/}
\email{kg.siebert@ians.uni-stuttgart.de}

\author[D.\,A. Ziegler]{Daniel A.\ Ziegler}
\address{Daniel A.\ Ziegler, Institut f\"ur Angewandte und
  Numerische Mathematik, Karlsruher Institut f\"ur Technologie,
  Englerstrsse 2, D-76131 Karlsruhe, Germany }
\urladdr{http://www.math.kit.edu/~daniel.ziegler}
\email{daniel.ziegler@kit.edu}

\keywords{Adaptive finite elements, parabolic problems, convergence}

\subjclass[2000]{Primary 65N30, 65N12, 65N50, 65N15}

\begin{abstract}
  We shall develop a fully discrete space-time adaptive method for linear
  parabolic problems based 
  on new reliable and efficient a posteriori analysis for
  higher order dG(s) finite element discretisations. 
  The adaptive strategy is motivated by the principle of equally distributing the a
  posteriori indicators in time and the convergence of the method is
  guaranteed by the uniform energy estimate from
  \cite{KrMoScSi:12} under minimal assumptions on the 
  regularity  of the data. 
\end{abstract}

\date{\small\today}

\maketitle

\section{Introduction\label{sec:introduction}}

Let $\Omega$ be a bounded polyhedral domain in $\R^d$, $d\in\N$. 
We consider the linear parabolic partial differential equation
\begin{equation}
  \label{eq:strong}
  \begin{aligned}
    \Dt u + \L u &= f \qquad
    &&\text{in } \Omega\times (0,T) \\
    u &= 0 &&\text{on } \partial\Omega\times (0,T)\\
    u(\cdot,0) &= u_0 &&\text{in } \Omega.
  \end{aligned}
\end{equation}
Hereafter, $\L u=-\divo{\Am\nabla u} + cu$ is a second order
elliptic operator with respect to space and $\Dt u=\frac{\partial
  u}{\partial t}$ denotes the partial derivative with respect to
time. In the simplest setting $\mathcal{L}=-\Delta$, whence 
\eqref{eq:strong} is the heat equation. Precise
assumptions on data are provided in Section~\ref{ss:weak_formulation}.

The objective of this paper is the design and a detailed convergence
analysis of an efficient adaptive finite element method for solving 
\eqref{eq:strong} numerically. To this end, we resort 
to adaptive finite elements in space combined with a discontinuous
Galerkin \dGs time-stepping scheme in Section~\ref{ss:fem}. The conforming
finite element spaces are continuous piecewise polynomials of fixed degree over a
simplicial triangulation of the domain $\Omega$. 
In each single time-step, we reduce or enlarge the local time-step size
and refine and coarsen the underlying triangulation.  

The adaptive decisions are based on a posteriori
error indicators. 
{Numerous such estimators for various error notions are available in 
the literature. Error bounds in $L^\infty(L^2)$ can e.g. be found in
\cite{ErikssonJohnson:91} or \cite{LakkisMakridakis:06}, where the
latter result is based on the elliptic reconstruction technique, which was introduced in  
\cite{MakridakisNochetto:03} in the semi-discrete context.
The $L^2(H^1)$ respectively $L^2(H^1)\cap H^1(H^{-1})$ error bounds
in~\cite{Picasso:98,Verfurth:03} 
are based on energy techniques and
have  been used with a \dGs[0] time-stepping scheme in the adaptive 
methods and convergence analysis presented in \cite{ChenFeng:04,KrMoScSi:12}. }
For our purpose, we generalise the residual based 
estimator \cite{Verfurth:03} to higher order \dGs[s] schemes in Section~\ref{s:errest}. 
The estimator is build from five indicators:
an indicator for the initial error, indicators for the temporal and
spatial errors, a coarsening error indicator, and an indicator
controlling the so-called consistency error. 
It is important to notice that besides the first indicator all other indicators
accumulate in $L^2$ in time. 
The adaptation of the time-step-size uses informations of the indicators
for the temporal error and the consistency error. The adaptation of the
spatial triangulation is based on refinement by bisection using
information from the indicators for the spatial error and for the coarsening
error. Very recently an independently developed guaranteed a
posteriori estimator for higher order
$\dGs$ schemes is provided in~\cite{ErnSmearsVohralik:16} using
equilibrated flux based bounds for the spatial error. 

By now the convergence and optimality of adaptive methods for
stationary inf-sup stable respectively coercive
problems is well-estab\-lished
\cite{BiDaDe:04,CaKrNoSi:08,ChenFeng:04,DiKrSt:16,
Dorfler:96,DieningKreuzer:08,KreuzerSiebert:11,
MoSiVe:08,MekchayNochetto:05,MoNoSi:00,MoNoSi:02,
Siebert:11,Stevenson:07};   
compare also with the overview article \cite{NoSiVe:09}.
The essential design principle motivating the adaptive strategies in 
most of the above methods is the equal distribution of the error. 
The importance of this principle is highlighted by the near 
characterisations of nonlinear approximation classes with the help 
of a thresholding algorithm in \cite{BiDaDePe:02,GaspozMorin:14}.

In contrast to the situation for above mentioned problems, the
convergence analysis of adaptive approximation of
time-dependent problems is still in its infancy. In 
\cite{SchwabStevenson:09} optimal computational complexity of
an adaptive wavelet method for parabolic problems is proved using a
symmetric and coercive discretisation based on a least squares formulation. 
To our best
knowledge, there exist only two results \cite{ChenFeng:04,KrMoScSi:12}
concerned with a rigorous convergence analysis of time-space adaptive
finite element methods. In \cite{ChenFeng:04}, it is
proved for the \dGs[0] time-stepping scheme, that each single
time-step terminates and that the error of the computed approximation
is below a prescribed tolerance when the final time is
reached. This, however, is not guaranteed and thus theoretically the adaptively generated
sequence of time instances $\{t_n\}_{n\ge 0}$ may not be finite and
such that $t_n\to t_\star<T$ as $n\to\infty$. This drawback has been overcome
in~\cite{KrMoScSi:12}  with the help of an a priori computed minimal
time-step size 
in terms of the
right-hand side $f$ and the discrete initial value $U_0$. 
However, neither design of the two methods heeds the principle of
equally distributing the error. Let us shed some light on this fact
with the help of the initial value problem 
\begin{align*}
  \Dt u + u = f\quad\text{in}~(0,T) \qquad\text{and}\qquad u(0)=u_0.
\end{align*}
Let $0=t_0<t_1<\ldots<t_N=T$ be some partition of $(0,T)$. Using the 
\dGs[0] time-stepping scheme we obtain $\{U_n\}_{n=0}^N$, such that 
\begin{align*}
  \frac{U_n-U_{n-1}}{\tau_n}+U_n=f_n:=\frac1{\tau_n}\int_{t_{n-1}}^{t_n}f\dt,\quad
  n=1,\ldots, N, \qquad U_0=u_0,
\end{align*}
where $\tau_n=t_n-t_{n-1}$. Let $\hU$ be the piecewise affine
interpolation $\hU$ of the nodal values $\{U_n\}_{n=0}^N$. Then we
have with
Young's inequality, that 
\begin{align*}
  \int_0^T\frac12 \Dt|u-\hU|^2+|u-\hU|^2\dt&=
                                                               \sum_{n=1}^N\int_{t_{n-1}}^{t_n}
                                                               (f-f_n)(u-\hU)+(U_n-\mathfrak{
                                                               u})(
                                                               u-\hU)\dt
  \\
  &\le \sum_{n=1}^N\int_{t_{n-1}}^{t_n} |f-f_n|^2 +
    |U_n-\hU|^2+ \frac12 |u-\hU|^2\dt.  
\end{align*}
A simple computation reveals
$\int_{t_{n-1}}^{t_n}|U_n-\hU|^2\dt= \frac13 \tau_n
|U_n-U_{n-1}|^2$. This term and $\int_{t_{n-1}}^{t_n}|f-f_n|^2\dt$
are the so-called time and consistency a posteriori indicators. 
In order to illustrate the basic differences in the design of the
adaptive schemes, we shall concentrate 
on the time indicator.
In~\cite{ChenFeng:04,KrMoScSi:12} the partition is constructed such that 
\begin{align*}
  |U_n-U_{n-1}|^2 \le \frac{\TOL^2}{T},\quad\text{which implies}
  \quad \sum_{n=1}^N \tau_n |U_n-U_{n-1}|^2\le
  \sum_{n=1}^N \tau_n \frac{\TOL^2}{T}=\TOL^2,
\end{align*}
i.e. the accumulated indicator is below the
prescribed tolerance $\TOL$.
We call this the $L^\infty$-strategy and remark that it does not aim  
at equally distributing the local
indicators. 
In contrast to this, we shall use the $L^2$-strategy 
\begin{align*}
\tau_n|U_n-U_{n-1}|^2\le\tol^2.
\end{align*}
Thanks to the  uniform energy bound 
\begin{align}\label{eq:energy-ode}
  \sum_{n=1}^N|U_n-U_{n-1}|^2\le \int_0^T|f|^2\dt +|U_0|^2
\end{align}
(see Corollary~\ref{cor:uniform_bound} below) we conclude then, that 
\begin{multline*}
  \sum_{n=1}^N \tau_n |U_n-U_{n-1}|^2=\sum_{\tau_n\le\delta}^N \tau_n
  |U_n-U_{n-1}|^2+\sum_{\tau_n>\delta}^N \tau_n |U_n-U_{n-1}|^2
  \\
  \le \delta\, \Big(\int_0^T|f|^2\dt +|U_0|^2\Big)+\frac{T}{\delta}
    \tol^2
    =\left(\frac{T}{\int_0^T|f|^2\dt +|U_0|^2}\right)^{\frac12}\,\tol
\end{multline*}
where $\delta=(T/(\int_0^T|f|^2\dt +|U_0|^2))^{1/2}$. Taking
$\tol=\TOL^2/\delta$ guarantees that the error is below the
prescribed tolerance $\TOL$. 

These arguments directly generalise to 
semi-discretisations of~\eqref{eq:strong} in time. In the case of a full
space-time discretisation of~\eqref{eq:strong} additional
indicators are involved, for which a control similar
to~\eqref{eq:energy-ode} is not available. 
We therefore enforce that these indicators are bounded by the 
time or the consistency indicator. If these indicators are
equally distributed in time, then this results also  in an
equal distribution of the other indicators. Otherwise, we shall use the
$L^\infty$-strategy from \cite{ChenFeng:04,KrMoScSi:12} as a backup strategy. 
The detailed algorithm \TAFEM for \eqref{eq:strong} is
presented in Section~\ref{sec:tafem} and its
convergence analysis is given in Section~\ref{sec:convergence}.

The
advantage of our new approach over the algorithms in
\cite{ChenFeng:04,KrMoScSi:12} is twofold. First, 
from the fact that the \TAFEM aims in an equal distribution of the
error, 
we expect an improved performance.
Second, we use an $L^2$-strategy for the consistency error, which 
requires only $L^2$-regularity of $f$ in time in stead of the
$H^1$-regularity needed for the $L^\infty$-strategy in
\cite{ChenFeng:04,KrMoScSi:12}. 
This makes the proposed method suitable for problems, where
the existing approaches may even fail completely.
We conclude the paper in Section~\ref{sec:numerics} by comments on the
implementation in \DUNE~\cite{DUNE:16} and some numerical experiments. The experiments 
confirm the expectations and show a more than
competitive performance of our algorithm~\TAFEM.

\section{The Continuous and Discrete Problems\label{s:prob+disc}}

In this section, we state the weak formulation of the continuous problem together with the assumptions on
data. Then the discretisation by adaptive finite
elements in space combined with the \dGs scheme in time is introduced.
\subsection{The Weak Formulation\label{ss:weak_formulation}}
For $d\in\N$, let $\Omega\subset\R^d$ be a bounded, polyhedral domain
that is meshed by some conforming simplicial mesh $\gridinit$. 
We denote by $H^1(\Omega)$ the Sobolev space of square integrable 
functions  $L^2(\Omega)$ whose first derivatives are in
$L^2(\Omega)$ and we let $\V:=H_0^1(\Omega)$ be the space of functions in
$H^1(\Omega)$ with vanishing trace on $\partial\Omega$. For 
any measurable set $\omega$ and $k\in\N$,  we denote by $\norm[\omega]{\cdot}$ the
$L^2(\omega;\R^k)$ norm, whence 
\begin{math}
  \norm[H^1(\Omega)]{v}^2=\norm[\Omega]{v}^2 + \norm[\Omega]{\nabla v}^2.
\end{math}

We suppose that the data of \eqref{eq:strong} has the following properties:
$\Am:\Omega\rightarrow \R^{d\times d}$ is piecewise Lipschitz over
$\gridinit$ and is symmetric positive definite with eigenvalues $0<
a_*\leq a^*<\infty$, \ie
\begin{equation}\label{A-bounds}
  a_*|\boldsymbol{\xi}|^2\leq\Am(x)\boldsymbol{\xi}\cdot\boldsymbol{\xi}\leq
  a^*|\boldsymbol{\xi}|^2,
  \qquad \text{for all } \boldsymbol{\xi}\in\R^d,\; x\in\Omega,
\end{equation}
$c\in L^\infty(\Omega)$ is non-negative, \ie $c\geq0$ in $\Omega$,
$f\in L^2((0,T);L^2(\Omega))= L^2(\Omega\times(0,T))$, and $u_0\in L^2(\Omega)$.  

We next turn to the weak formulation of \eqref{eq:strong}; compare with
\cite[Chap.~7]{Evans:10}. We let $\B:\V\times
\V\rightarrow\R$ be
the symmetric bilinear form associated to the weak form
of elliptic operator $\mathcal{L}$, \ie
\begin{align*}
  \bilin{w}{v} \definedas \int_\Omega \Am\nabla v \cdot \nabla w 
  +c\,vw \,dV \qquad\text{for all } v,\,w\in\V.
\end{align*}
Recalling the Poincar\'e-Friedrichs inequality 
\begin{math}
  \norm[\Omega]{v}\le C(d,\Omega)\norm[\Omega]{\nabla v}
\end{math}
for all $v\in \V$ \cite[p.~158]{GilbargTrudinger:01} we deduce from
\eqref{A-bounds} that $\B$ is a scalar product on $\V$ with
induced norm
\begin{align*}
  \Enorm{v}^2:=\bilin{v}{v}= \int_\Omega \Am \nabla v\cdot\nabla v +
cv^2\,dV  \qquad \text{for all } v\in H_0^1(\Omega).
\end{align*}
This \emph{energy norm} is equivalent to the
$H^1$-norm $\norm[H^1(\Omega)]{\cdot}$ and we shall use the energy
norm in the subsequent analysis. We denote the 
restriction of the energy norm to some subset $\omega\subset\Omega$
by $\Enorm[\omega]{\cdot}$ and let $\V^*\definedas H^{-1}(\Omega)$
be the dual space of $H^1_0(\Omega)$ equipped with the operator norm
\begin{math}
\Enorm[*]{g}\definedas \sup_{v\in\V} \frac{\dual{g}{v}}{\Enorm{v}}.
\end{math}

The weak solution space
\begin{align*}
  \W\definedas \left\lbrace u\in L^2(0,T; \V) \mid
    \Dt u\in L^2(0,T;\V^*)
\right\rbrace.
\end{align*}
is a Banach space endowed with the norm
\begin{align*}
  \Wnorm{v}^2=\int_{0}^{T}\Enorm[*]{\Dt v}^2 +
  \Enorm{v}^2\dt+\norm{v(T)}^2,\qquad v\in\W[0,T].
\end{align*}
Moreover, it is continuously embedded into $C^0([0,T];L^2(\Omega))$; see e.g. \cite[Chap.~5]{Evans:10}.

After these preparations, we are in the position to state the weak
formulation of \eqref{eq:strong}: A function $u\in\W$ is a weak
solution to \eqref{eq:strong} if it satisfies
\begin{subequations}\label{eq:weak}
  \begin{alignat}{2}\label{eq:weak.a}
    \dual{\Dt u(t)}{v} + \bilin{u(t)}{ v} &= \scp{f(t)}{v}
    \quad&&\text{for all } v\in \V,\; \text{a.e. } t \in(0,T),\\
    \label{eq:weak.b}
    u(0) &= u_0.
  \end{alignat}
\end{subequations}
Hereafter, $\scp{\cdot}{\!\cdot}$ denotes the $L^2(\Omega)$ scalar
product. Since the operator $\mathcal{L}$ is elliptic, problem
\eqref{eq:weak} admits for any $f\in L^2(0,T;L^2(\Omega))$ and $u_0\in
L^2(\Omega)$ a unique weak solution; compare e.g. with \cite[Chap.~7]{Evans:10}.

\subsection{The Discrete Problem\label{ss:fem}}

For the discretization of \eqref{eq:weak} we use adaptive finite elements in
space and a \dGs scheme with adaptive time-step-size controle. 

\paragraph{Adaptive Grids and Time Steps} For the adaptive space
discretization we restrict ourselves to simplicial grids and
local refinement by bisection; compare with
\cite{Bansch:91,Kossaczky:94,Maubach:95,Traxler:97} as well as
\cite{NoSiVe:09,SchmidtSiebert:05} and the references therein.  To be
more precise, refinement is based on the initial conforming
triangulation $\gridinit$ of $\Omega$ and a procedure $\REFINE$ with the
following properties. Given a conforming triangulation $\grid$ and a
subset $\marked\subset\grid$ of \emph{marked elements}. Then 
\begin{displaymath}
  \REFINE(\grid,\marked)
\end{displaymath}
outputs a conforming refinement $\grid_+$ of $\grid$ such that all elements in
$\marked$ are bisected at least once. In general, additional elements are refined in
order to ensure conformity. The input $\grid$ can either be
$\gridinit$ or the output of a previous application of $\REFINE$.  The
class of all conforming triangulations that can be produced from
$\gridinit$ by finite many applications of $\REFINE$, we denote by $\grids$.  For $\grid\in\grids$
we call $\grid_+\in\grids$ a \emph{refinement} of $\grid$ if $\grid_+$ is
produced from $\grid$ by a finite number of applications of $\REFINE$
and we denote this by $\grid\leq\grid_+$ or
$\grid_+\ge\grid$. Conversely, we call
any $\grid_-\in\grids$ with $\grid_-\le \grid$ a
\emph{coarsening} of $\grid$.  

Throughout the discussion we only deal with conforming grids, this
means, whenever we refer to some triangulations $\grid$, $\grid_+$, and $\grid_-$ we
tacitly assume $\grid,\grid_+,\grid_-\in\grids$.
One key property of the refinement by bisection is uniform shape
regularity for any $\grid\in\grids$. This means that all constants depending
on the shape regularity are uniformly bounded depending on
$\gridinit$.

For the discretization in time we let $0=t_0<t_1<\dots<t_N=T$ be a
partition of $(0,T)$ into half open subintervals $I_n=(t_{n-1},t_n]$ with
corresponding local time-step sizes $\tau_n:=\abs{I_n} = t_n-t_{n-1}$, $n=1,\dots,N$.

\paragraph{Space-Time Discretization}
For the spatial discretization we use Lagrange finite elements. This is, 
for any $\grid\in\grids$ the finite element space $\V(\grid)$ consists of
all continuous, piecewise polynomials of fixed degree $\ell\ge1$ over
$\grid$ that vanish on $\partial\Omega$. This gives a 
conforming discretization of $\V$, \ie $\V(\grid)\subset\V$.
Moreover, Lagrange finite elements give nested spaces, \ie
$\V(\grid)\subset\V(\grid_+)$ whenever $\grid\le\grid_+$. 

We denote by $\gridn[0]$ the triangulation at $t_0=0$ and for $n\ge
1$, we denote by $\gridn$ the grid in $I_n$ and let
$\V_n=\VG[\gridn]$, $n=0,\ldots, N$, be 
the corresponding finite element spaces. For
$\grid\in\grids$ we denote by $\ProG\colon L^2(\Omega)\to\V(\grid)$
the $L^2$ projection onto $\V(\grid)$ and set
$\ProG[n]\definedas\ProG[\gridn]$. 

On each time interval, the discrete approximation is polynomial in time 
over the corresponding spatial finite element
space. Let $s\in\N_0$, for any real vector space $\U$ and interval
$I\subset \R$, we denote by
\begin{align*}
  \Ps\big(I,\U\big):=\Big\{t\mapsto \sum_{i=0}^s t^iV_i:V_i\in\U\Big\}
\end{align*}
the space of all polynomials with degree less or equal $s$ over
$\U$. We write $\Ps(\U):=\Ps(\R,\U)$ and $\Ps:=\Ps(\R)$.

Furthermore, for an interval $I\subset (0,T)$ we let
\begin{displaymath}
  f_I\in \Ps(I,L^2(\Omega))
\end{displaymath}
be the best-approximation of $f_{|I}$ in $L^2(I,L^2(\Omega))$. 
In particular we use
$f_n\definedas f_{I_n}$  as a time-discretization of $f$ on $I_n$. 
For $s=0$, $f_I=\fint_I f\dt$ is the mean value of $f$ on $I$.

In the following, we introduce the so called discontinuous Galerkin
time-stepping scheme $\dGs$ of degree $s$, where $\dGs[0]$
is the well know implicit Euler scheme.
To this end, we denote for $n\ge 1$ the actual grid on $I_n$ by $\gridn$ and
let $\Vn=\V(\gridn)$ be the corresponding finite element space.
We start with a suitable initial refinement
$\gridn[0]$ of $\gridinit$ and an approximation 
$\Un[0]=\Pi_0 u_0=\Pi_{\gridn[0]} u_0\in\Vn[0]$ of the initial value
$u_0$. Note that in principle, any suitable interpolation operator can
be used instead of $\Pi_0$.
We then inductively compute for $n>0$ a
solution $\UIn\in\Ps(\Vn)$ to the problem
\begin{align}\label{eq:discrete}
  \begin{split}
    \int_{I_n}\scp{\Dt \UIn}{V} + \bilin{\UIn}{V}\dt
    &+\scp{\jump{U}_{n-1}}{V(t_{n-1})}
   = \int_{I_n}\scp{f_{n}}{ V} \dt
  \end{split}
\end{align}
for  all  $V\in\Ps(\Vn)$. Thereby $f_n\definedas f_{I_n}$ and $\jump{U}_{n-1}$ denotes the jump
\begin{align*}
  \jump{U}_{n-1}:=U_{n-1}^+-U_{n-1}^-,
\end{align*}
of $U$ across $t_{n-1}$, 
where we used $ U_{n-1}^+:=\lim_{t\downarrow
    t_{n-1}}\UIn(t)$,  $U_{n}^-:=\UIn(t_{n})$,
  $n=1,\ldots,N$, and 
  $U_0^-:=U_0$.
Note that with this definition we have 
$U_{n-1}^-=U(t_{n-1})$. The solution $U$ is uniquely defined
\cite{Thomee:06} 
and we will see below that \eqref{eq:discrete} 
is equivalent to an $s+1$ dimensional second
order elliptic system. Note that $U$ is
allowed to be discontinuous across the nodal points $t_0,\ldots,t_N$ and
hence in general $U\not\in\W$. 

In order to construct from $U$ a conforming function, 
we recall that the \dGs schemes are
closely related to Runge Kutta RadauIIA collocation methods; see
e.g. \cite{AkMaNo:09}. The corresponding RadauIIA quadrature
formula with abscissae $c_1, \ldots, c_{s+1}$ and weights
$b_1,\ldots,b_{s+1}$ is exact of degree $2s$. In fact, we have
\begin{align}
  \label{eq:RadauIIA}
  \sum_{j=1}^{s+1}b_j P(c_j) =\int_0^1P(t)\dt\qquad \text{for all}~P\in\Ps[2s].
\end{align}

We define $\hU\in \W$, $\hU_{|I_n}\in\Ps[s+1](\V)$ as the piecewise interpolation
of $U$ at the local RadauIIA points $t_n^j:=t_{n-1}+c_j\tau_n$, \ie
\begin{subequations}\label{eq:hU}
  \begin{align}\label{eq:hUa}
    \hU(t_{n}^j) &= \UIn(t_{n}^j)\in\Vn, \qquad j =1,\ldots,s+1.
  \intertext{The continuous embedding of  $\W$ in $C^0([0,T];L^2(\Omega))$
  additionally enforces}
\label{eq:hUb}
    \hU(t_{n-1}) &= \Un[n-1]^-\in\Vn[n-1].
  \end{align}
\end{subequations}
Hence $\hU$ is uniquely defined by 
\begin{align}\label{eq:UhLagrange}
  \hU_{|I_n}:= \sum_{j=0}^{s+1} L_j\big(\tfrac{t-t_{n-1}}{\tau_n}\big)\,
  U(t_n^j); 
\end{align}
with the Lagrange polynomials
\begin{align}\label{eq:Lagrange}
  L_j(t):= \prod_{\atop{i=0}{i\neq j}}^{s+1}
  \frac{t-c_j}{c_i-c_j}\in\Ps[s+1],\qquad j=0,\ldots,s+1
\end{align}
and $c_0:=0$. 
Using integration by parts with
respect to time, \eqref{eq:RadauIIA}, and  
\eqref{eq:hU}, we observe  that \eqref{eq:discrete} is equivalent to  
\begin{align}\label{eq:discreteb}
   \int_{I_n}\scp{\Dt \hU}{V} + \bilin{U}{V}\dt
    &
   = \int_{I_n}\scp{f_{n}}{ V} \dt
\end{align}
for all $n=1,\ldots,n$ and $V\in\Ps(\Vn)$.

We emphasize that $\hU(t)$ is a finite element function,
since for $t\in I_n$, we have $\hU(t)\in\V(\gridn[n-1]\oplus\gridn)\subset\V$, 
where $\gridn[n-1]\oplus\gridn$
is the smallest common refinement of $\gridn[n-1]$ and $\gridn$, which we
call \emph{overlay}.
Continuity of $\hU$ in time, in combination with $\hU(t)\in\V$ for all
$t\in I$ then implies $\hU\in\W$. 
\begin{rem}
  For $s=0$ we see from~\eqref{eq:discrete} that in each time-step
  $n\in\N$, we need to solve for partial differential operators of the form $-\Delta+\mu$ with
  $\mu=\frac1{\tau_n}$ in order to
  compute $U_n$. Unfortunately, for $s>0$, though still coercive,
  \eqref{eq:discrete} becomes a $s+1$ dimensional coupled non-symmetric
  system. Recently,  in~\cite{Smears:15} a PCG method for a
  symmetrisation of~\eqref{eq:discrete} is proposed, which is fully
  robust with respect to the discretisation parameters 
  $s$ and $\tau$, provided a solver for the
  operator $-\Delta+\mu$, $\mu\ge0$ is available. 
\end{rem}

\section{A Posteriori Error Estimation\label{s:errest}}
One basic ingredient of adaptive methods are a posteriori error
indicators building up a reliable upper bound for the error in terms
of the discrete solution and given data. The \dGs[0] method
corresponds to the implicit Euler scheme and residual based
estimators for the heat equation can be found in
\cite{Verfurth:03}. In this section we generalize this result 
and prove reliable and efficient residual based estimators for
\dGs schemes  \eqref{eq:discrete}, with arbitrary $s\in\N_0$.

Some arguments in this section are  straight forward
generalizations of those
in \cite{Verfurth:03} and we only sketch their proofs, others are
based on new ideas and therefore we shall prove them in detail.

\subsection{Equivalence of Error and Residual\label{s:err=res}}
In order to prove residual based error estimators, one first has to
relate the error to the residual. To this end we 
note that \eqref{eq:weak} can be taken as an operator equation in
$L^2(0,T;\V^*)\times L^2(\Omega)$. Its residual $\Res(\hU)$ in $\hU\in\W$ is given by  
\begin{align}\label{eq:Res}
  \begin{split}
    \scp[]{\Res(\hU)}{v}&=\scp[]{\Dt(u-\hU)}{v}+\bilin{u-\hU}{v}
    \\
    &=\scp{f-\Dt \hU}{v}-\bilin{\hU}{v}\qquad\qquad\text{for all
      $v\in\V$.}
  \end{split}
\end{align}

From \cite{TantardiniVeeser:16b}, we have the following identity
between the residual and the error.
\begin{prop}[Abstract Error Bound]\label{p:err=res}
  Let $u\in\W$ be the solution of \eqref{eq:weak} and let 
  $\hU\in\W$ be the approximation defined in \eqref{eq:hU} for
  time instances $t_0=0,\ldots,t_N=T$ and time-step sizes
  $\tau_n:=t_{n}-t_{n-1}$, $n=1,\ldots,N$. Then it holds for $0\le k\le N$, that 
  \begin{subequations}\label{eq:err=res}
    \begin{align}\label{eq:err<res}
      \norm[\W]{u-\hU}^2&= \norm{u_0-U_0}^2 + 
      \norm[{L^2(0,T;\V^*)}]{\Res(\hU)}^2
      \intertext{and}\label{eq:res<err}
      \norm[L^2(I_n,\V^*)]{\Res(\hU)}^2&\le 2\big\{
      \norm[L^2(I_n;\V^*)]{\Dt(u-\hU)}^2+\norm[L^2(I_n;\V)]{u-\hU}^2\big\}.
    \end{align}
  \end{subequations}

\end{prop}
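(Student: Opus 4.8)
The plan is to exploit the Hilbert space structure induced by the energy scalar product $\B$ on both $\V$ and its dual. First I would introduce the Riesz isomorphism $R\colon\V\to\V^*$ defined by $\dual{Rw}{v}=\bilin{w}{v}$ for all $v\in\V$; by the very definition of $\Enorm[*]{\cdot}$ this is an isometry from $(\V,\Enorm{\cdot})$ onto $(\V^*,\Enorm[*]{\cdot})$, and it equips $\V^*$ with the Hilbert scalar product $(g,h)_*\definedas\dual{g}{R^{-1}h}$ that induces $\Enorm[*]{\cdot}$. Abbreviating $e\definedas u-\hU$, the defining relation \eqref{eq:Res} then reads $\Res(\hU)=\Dt e+Re$ as an identity in $\V^*$.

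For the equality \eqref{eq:err<res}, I would expand the squared dual norm pointwise in time with respect to $(\cdot,\cdot)_*$:
\[
\Enorm[*]{\Res(\hU)}^2=\Enorm[*]{\Dt e}^2+2\,(\Dt e,Re)_*+\Enorm[*]{Re}^2.
\]
The last term equals $\Enorm{e}^2$ because $R$ is an isometry, while the crucial cross term simplifies to $(\Dt e,Re)_*=\dual{\Dt e}{R^{-1}Re}=\dual{\Dt e}{e}$. Integrating over $(0,T)$ and invoking the integration-by-parts formula $\int_0^T\dual{\Dt e}{e}\dt=\tfrac12\big(\norm{e(T)}^2-\norm{e(0)}^2\big)$, valid for $e\in\W$, the cross term contributes precisely the boundary values $\norm{e(T)}^2$ and $-\norm{e(0)}^2=-\norm{u_0-U_0}^2$. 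Collecting the remaining terms reproduces exactly the definition of $\Wnorm{e}^2$, which is \eqref{eq:err<res}.

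The bound \eqref{eq:res<err} is then immediate: by the triangle inequality in $(\V^*,\Enorm[*]{\cdot})$, the isometry $\Enorm[*]{Re}=\Enorm{e}$, and the elementary estimate $(a+b)^2\le 2a^2+2b^2$, I obtain pointwise in time
\[
\Enorm[*]{\Res(\hU)}^2\le 2\,\Enorm[*]{\Dt e}^2+2\,\Enorm{e}^2,
\]
and integrating over $I_n$ yields \eqref{eq:res<err}.

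The only genuinely delicate point is the integration-by-parts identity used for the cross term: it requires that $t\mapsto\norm{e(t)}^2$ be absolutely continuous with derivative $2\dual{\Dt e}{e}$, which is not automatic for elements of $\W$ but follows from the continuous embedding $\W\hookrightarrow C^0([0,T];L^2(\Omega))$ recorded in Section~\ref{ss:weak_formulation} (and guarantees in particular that $\norm{e(0)}^2=\norm{u_0-U_0}^2$ is well defined via $\hU(t_0)=U_0$). Once the Hilbert structure of $\V^*$ and this identity are in place, the remaining manipulations are routine.
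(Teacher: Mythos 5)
Your proof is correct and complete. For context: the paper itself offers no proof of Proposition~\ref{p:err=res} --- the statement is imported directly from \cite{TantardiniVeeser:16b} --- so there is no in-paper argument to compare against; your argument supplies what the paper leaves to a citation, and it is the natural (and, in essence, the cited) one. Writing $e:=u-\hU$ and $\Res(\hU)=\Dt e+Re$ with the Riesz isometry $R$ of the energy scalar product, the expansion of $\Enorm[*]{\Res(\hU)}^2$ in the Hilbert structure of $(\V^*,\Enorm[*]{\cdot})$ is exact, the cross term reduces to $2\dual{\Dt e}{e}$, and by the integration-by-parts formula on $\W$ its time integral produces precisely the boundary terms $\norm{e(T)}^2-\norm{e(0)}^2$ that turn $\int_0^T\Enorm[*]{\Dt e}^2+\Enorm{e}^2\dt$ into $\Wnorm{e}^2-\norm{u_0-U_0}^2$; the identification $e(0)=u_0-U_0$ is legitimate since $\hU(0)=U_0^-=U_0$ by \eqref{eq:hUb} and $u(0)=u_0$ by \eqref{eq:weak.b}. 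You also correctly isolate the one delicate point --- absolute continuity of $t\mapsto\norm{e(t)}^2$ with derivative $2\dual{\Dt e}{e}$, valid on all of $\W$ --- which is exactly what makes an identity, rather than a mere two-sided equivalence, available here. The bound \eqref{eq:res<err} then follows, as you note, from the triangle inequality, the isometry $\Enorm[*]{Re}=\Enorm{e}$, and Young's inequality; no issues there.
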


The rest of
this section concentrates on proving computable upper and lower bounds for
the error. We note that the initial error $\norm{u_0-U_0}$
in \eqref{eq:err=res} is already a posteriori computable, whence 
it remains to estimate the dual norm of the residual. However, there is
another issue of separating the influence of the temporal and the spatial
discretization to the error. 
In particular, defining the
temporal residual $\Rest(\hU)  \in L^2(0,T;\V^*)$ as
\begin{align}
  \label{df:rest}
  \scp[]{\Rest(\hU)}{v}&\definedas\bilin{U-\hU}{v}
  \intertext{and the spatial residual $\Resh(\hU)\in L^2(0,T;\V^*)$
    as}
  \label{df:resh}
  \scp[]{\Resh(\hU)}{v}&\definedas\scp[]{f_n-\Dt\hU}{v}-\bilin{U}{v}
  \qquad\text{on}\quad I_n,
\end{align}
we obtain
\begin{align}\label{eq:resdeco}
  \Res(\hU)= \Rest(\hU)+\Resh(\hU)+f-f_n \qquad\text{on}\quad I_n.
\end{align}
In what follows we use this decomposition to prove separated time and
space error indicators, which build up a reliable and efficient bound
for the error. 

\subsection{Temporal Residual\label{s:rest}}
Recalling the definition of the Lagrange polynomials
\eqref{eq:Lagrange}, we have the local unique representation 
\begin{align*}
  \UIn(t)= U_{n-1}^+L_0(\tfrac{t-t_{n-1}}{\tau_n})+\sum_{j=1}^{s+1}U(t_{n}^j)
    \,L_j(\tfrac{t-t_{n-1}}{\tau_n})
    \in\Ps(\V_n)
\end{align*}
for all $t\in I_n$. Hence, by \eqref{eq:UhLagrange} we get
\begin{align*}
   \hU(t)-U(t)&=(U_{n-1}^--U_{n-1}^+)\,L_0(\tfrac{t-t_{n-1}}{\tau_n})
\end{align*}
and thanks to~\eqref{eq:hU} and \eqref{eq:RadauIIA}, we obtain
\begin{align}\label{eq:rest}
  \begin{split}
    \int_{I_n}\norm[\V^*]{\Rest(\hU)}^2\dt&=
    \int_{I_n} \Enorm{U- \hU}^2 \dt
    =\Enorm{ U_{n-1}^--
      U_{n-1}^+}^2\int_{I_n}|L_0(\tfrac{t-t_{n-1}}{\tau_n})|^2\dt
    \\
    &=\tau_n\, C_\tau\,\Enorm{U_{n-1}^-- U_{n-1}^+}^2,
  \end{split}
\end{align}
where
$C_\tau=C_\tau(s):=\int_{0}^1|L_0(t)|^2\dt$. 

\begin{rem}\label{r:Ct}
  Observing that the RadauIIA abscissae are the roots of the
  polynomial $\lambda_{s}(2t-1)-\lambda_{s+1}(2t-1)$ and 
  $\lambda_{s}(-1)=(-1)^{s}$, 
  with the Legendre polynomials $\lambda_n$, $n\in\N_0$, it follows
  that we have the representation 
  $$L_0(t)=\frac{(-1)^s}2(\lambda_{s}(2t-1)-\lambda_{s+1}(2t-1))$$ and it 
  can be easily shown that
  $C_\tau=\frac14(\frac{1}{2s+3}+\frac1{2s+1})$.
\end{rem}

\subsection{The Spatial Residual\label{s:Resh}}
In this section we estimate the spatial residual.

\begin{lem}\label{l:UpSpat}
  Let $U$ be the approximation of \eqref{eq:discrete} to the solution $u$
  of \eqref{eq:weak} and let $\hU$ be its interpolation defined by
  \eqref{eq:hU}. Then there exists a constant $C_\grids>0$, such that
  \begin{align*}
    \int_{I_n}\norm[\V^*]{\Resh(\hU)}^2\dt&\le C_\grids \sum_{\elm\in\grid_n}\int_{I_n}
  h_\elm^2\norm[\elm]{\Dt \hU+\L U-f_n}^2+h_\elm\norm[\partial E]{J(U)}^2\dt
  \end{align*}
  for all $1\le n\le N$. Thereby, for $V\in\Vn$ we denote by $J(V)|_S$ for an interior side $\side$
the jump of the normal flux $\Am\nabla V\cdot\normal$ across $\side$
and for boundary sides $\side$ we set $J(V)|_S\equiv0$.
The mesh-size of an element
$\elm\in\grid$ is given by $h_\elm\definedas |\elm|^{1/d}$.
\end{lem}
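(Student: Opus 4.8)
The plan is to localise in time and then apply the classical residual argument for a coercive elliptic problem. The only place where the parabolic structure enters is a preliminary step upgrading the time-integrated Galerkin relation \eqref{eq:discreteb} to a \emph{pointwise-in-time orthogonality}. First I would fix $\phi\in\Vn$ and test \eqref{eq:discreteb} with $V(t)=p(t)\phi$ for arbitrary $p\in\Ps$; since \eqref{eq:discreteb} then reads $\int_{I_n}p(t)\scp[]{\Resh(\hU)(t)}{\phi}\dt=0$, the scalar function
\begin{align*}
  t\mapsto\scp[]{\Resh(\hU)(t)}{\phi}
  &=\scp[]{f_n-\Dt\hU}{\phi}-\bilin{U}{\phi}
\end{align*}
is $L^2(I_n)$-orthogonal to $\Ps$. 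Because $\hU_{|I_n}\in\Ps[s+1](\V)$, $U_{|I_n}\in\Ps(\Vn)$ and $f_n\in\Ps(I_n,L^2(\Omega))$, this function is itself a polynomial of degree at most $s$ in $t$; being orthogonal to all of $\Ps$, it vanishes identically, so that
\begin{align*}
  \scp[]{\Resh(\hU)(t)}{\phi}&=0\qquad\text{for all }\phi\in\Vn,\ t\in I_n.
\end{align*}
This is exactly the Galerkin orthogonality needed to run the stationary estimate at each fixed time.

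With this at hand, I would fix $t\in I_n$ and bound $\norm[\V^*]{\Resh(\hU)(t)}$ by the usual Verf\"urth-type argument. Given $v\in\V$, let $v_n\in\Vn$ be a Scott--Zhang quasi-interpolant of $v$ onto $\V(\gridn)$. The orthogonality gives $\scp[]{\Resh(\hU)(t)}{v}=\scp[]{\Resh(\hU)(t)}{v-v_n}$, and an elementwise integration by parts of $\bilin{U}{v-v_n}$ (understanding $\L U$ elementwise, which is licit since $\Am$ is Lipschitz on each $\elm\in\gridn$) produces the strong residual together with the normal-flux jumps,
\begin{align*}
  \scp[]{\Resh(\hU)(t)}{v}
  &=\sum_{\elm\in\gridn}\int_{\elm}(f_n-\Dt\hU-\L U)(v-v_n)
  -\sum_{\side}\int_{\side}J(U)(v-v_n).
\end{align*}
Cauchy--Schwarz, the local approximation and stability estimates $\norm[\elm]{v-v_n}\Cleq h_\elm\norm[\omega_\elm]{\nabla v}$ and $\norm[\side]{v-v_n}\Cleq h_\elm^{1/2}\norm[\omega_\side]{\nabla v}$, and the finite overlap of the patches $\omega_\elm,\omega_\side$ then bound the right-hand side by the square root of the claimed indicator times $\norm[\Omega]{\nabla v}$. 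Dividing by $\Enorm{v}$ and using $\norm[\Omega]{\nabla v}\le a_*^{-1/2}\Enorm{v}$ from \eqref{A-bounds}, I obtain
\begin{align*}
  \norm[\V^*]{\Resh(\hU)(t)}^2&\Cleq
  \sum_{\elm\in\gridn}h_\elm^2\norm[\elm]{\Dt\hU+\L U-f_n}^2
  +h_\elm\norm[\partial\elm]{J(U)}^2,
\end{align*}
with a constant depending only on the shape regularity, which is uniform over $\grids$; this fixes $C_\grids$. Integrating this pointwise bound over $t\in I_n$ yields the assertion.

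The main obstacle is really the first step: recognising that \eqref{eq:discreteb}, which a priori only holds after integration against time-polynomials, forces $\Resh(\hU)(t)$ to be $\Vn$-orthogonal for \emph{every} $t$, precisely because the tested residual is a time-polynomial of degree $\le s$ orthogonal to all of $\Ps$. Once this pointwise orthogonality is secured the spatial estimate decouples completely from time and reduces to the familiar coercive residual bound of \cite{Verfurth:03}, so I expect the remaining steps --- the interpolation estimates and the bookkeeping of the shape-regularity dependence in $C_\grids$ --- to be routine, and I would only sketch them.
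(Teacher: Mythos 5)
Your proof is correct, but it handles the time variable by a genuinely different mechanism than the paper. The paper never establishes orthogonality of $\Resh(\hU)(t)$ for every $t\in I_n$; instead it exploits the exactness \eqref{eq:RadauIIA} of the RadauIIA quadrature on $\Ps[2s]$ three times: first to rewrite $\int_{I_n}\norm[\V^*]{\Resh(\hU)}^2\dt$ as a weighted sum of point values at the abscissae $t_n^j$, then --- testing \eqref{eq:discreteb} with a $V\in\Ps(\Vn)$ chosen to vanish at all abscissae but one --- to obtain Galerkin orthogonality \emph{only at those points}, and finally to reassemble the pointwise elliptic bounds into the time integral, using that the right-hand side is again a polynomial of degree $2s$ in $t$. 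Your replacement of all three steps by the single algebraic observation that $t\mapsto\scp[]{\Resh(\hU)(t)}{\phi}$ lies in $\Ps$ and is $L^2(I_n)$-orthogonal to $\Ps$, hence vanishes identically, is valid, and it yields a formally stronger intermediate statement (orthogonality for all $t\in I_n$, not just at the $s+1$ Radau points --- though the two are of course equivalent, since a polynomial of degree at most $s$ vanishing at $s+1$ points vanishes everywhere). What your route buys is economy: no quadrature machinery is needed, and integrating the pointwise bound over $I_n$ is immediate. What the paper's route buys is that its proof delivers the bound directly in the quadrature-sum form in which the spatial indicator \eqref{df:sest} is actually defined and computed, and the same RadauIIA identity serves as the workhorse in the neighbouring arguments (Lemma~\ref{l:LowSpat}, Proposition~\ref{prop:uniform_bound}), giving a uniform toolkit. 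The elliptic part of your argument (Scott--Zhang interpolation, elementwise integration by parts, scaled trace inequality, and $\norm[\Omega]{\nabla v}\le a_*^{-1/2}\Enorm{v}$ from \eqref{A-bounds}) coincides with what the paper invokes as standard, so no gap there.
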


\begin{proof}
  Recalling \eqref{df:resh},
  we first observe that 
  $\norm[\V^*]{\Resh(\hU)}^2\in\Ps[2s]$,
  whence by \eqref{eq:RadauIIA} we have
  \begin{align*}
    \int_{I_n} \norm[\V^*]{\Resh(\hU)}^2\dt = \tau_n \sum_{j=1}b_j
    \norm[\V^*]{\Resh(\hU)(t_{n}^j)}^2. 
  \end{align*}
  Therefore, it suffices to estimate $\norm[\V^*]{\Resh(\hU)}^2$ 
  at the abscissae of the RadauIIA quadrature formula. For arbitrary
   $V_j\in\V_n$, $j=1,\ldots,s+1$ choose
   $V\in\Ps(\V_n)$ in \eqref{eq:discreteb} such that 
   $V(t+c_i\tau_n)=V_j\delta_{ij}$, $1\le i\le s+1$. 
   Then  exploiting again \eqref{eq:RadauIIA} yields the Galerkin orthogonality 
  \begin{align*}
    \scp[]{\Resh(\hU)(t_n^j)}{V_j}&= 0 \qquad j=1,\ldots,s+1.
  \end{align*}
  Since $V_j\in\Vn$ was arbitrary, we have for any $v\in\V$, that
  \begin{align*}
    \scp[]{\Resh(\hU)(t_n^j)}{v}&=\scp[]{\Resh(\hU)(t^j_n)}{v-V}\qquad \text{for
      all } V\in\V_n.
  \end{align*}
  Using integration by parts with respect to the space variable,
  the Cauchy-Schwarz inequality, the scaled trace inequality, and choosing $V$ as
  a suitable interpolation of $v$, we arrive at 
  \begin{align*}
    \norm[\V^*]{\Resh(\hU)(t^j_n)}^2 \le C_\grids \sum_{\elm\in\grid_n}\Big\{&
    h_\elm^2\norm[\elm]{(\Dt \hU+\L U-f_n) (t^j_n)}^2
   +h_\elm\norm[\partial E]{J(U) (t^j_n)}^2\Big\}.
  \end{align*}
  The right hand side is a pointwise  evaluation of a polynomial of
  degree $2s$ and thus the claimed upper bound follows from~\eqref{eq:RadauIIA}.
\end{proof}

The following result shows that the spatial indicators are locally
efficient as well. 
\begin{lem}\label{l:LowSpat}
  Under the conditions of Lemma \ref{l:UpSpat}, we have
  \begin{align*}
    \sum_{\elm\in\grid_n}\int_{I_n}
  h_\elm^2\norm[\elm]{\Dt \hU+\L U-f_n}^2&+h_\elm\norm[\partial
  E]{J(U)}^2\dt
  \\
  &\le C  \big\{   \int_{I_n}\norm[\V^*]{\Resh(\hU)}^2+\oscG[\grid_n]^2(f_n,\hU)\dt\Big\},
  \end{align*}
  where
  \begin{align*}
    \oscG[\grid_n]^2(f_n, \hU):=\sum_{\elm\in\grid_n}
    h_\elm^2\norm[\elm]{\Dt \hU+\L U-f_n-R_\elm }^2
  + h_\elm\norm[\partial E]{J(U)-J_E}^2
  \end{align*}
  with at time $t\in I_n$ pointwise $L^2(\Omega)$-best approximations $R_\elm(t)\in \Ps[2\ell-2](E)$
  respectively $J_E(t)_{|\side}\in \Ps[2\ell-1](\side)$ for each side
  $\side\subset \partial\elm$. The constant $C>0$ depends solely on
  the shape regularity of $\grids$. 
\end{lem}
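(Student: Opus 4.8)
The plan is to adapt the classical bubble-function technique of Verf\"urth to the \dGs setting by reducing the time-integrated lower bound to pointwise-in-time estimates at the RadauIIA abscissae $t_n^j=t_{n-1}+c_j\tau_n$. The key observation, exactly as in the proof of Lemma~\ref{l:UpSpat}, is that for fixed $t\in I_n$ the quantities $\norm[\V^*]{\Resh(\hU)(t)}^2$, $h_\elm^2\norm[\elm]{(\Dt\hU+\L U-f_n)(t)}^2$, $h_\elm\norm[\partial E]{J(U)(t)}^2$ and the corresponding oscillation terms are polynomials of degree at most $2s$ in $t$, because $\hU_{|I_n}\in\Ps[s+1]$, $U_{|I_n}\in\Ps(\Vn)$, $f_n\in\Ps$, and the pointwise $L^2$-best approximations $R_\elm(t)$, $J_E(t)$ depend polynomially on the coefficients of the residual. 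Hence, once the inequality is established for every fixed $t=t_n^j$, exactness \eqref{eq:RadauIIA} of the RadauIIA formula converts the weighted pointwise sum $\tau_n\sum_j b_j(\cdots)(t_n^j)$ into the claimed integral over $I_n$. It therefore suffices to prove a purely spatial, elliptic-type bound at a single arbitrary quadrature time $t=t_n^j$.

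At such a fixed time I would first localise the residual: integrating the term $\bilin{U}{v}$ in \eqref{df:resh} by parts element by element, the volume contributions collect into the interior residual $r_\elm:=(f_n-\Dt\hU-\L U)(t)$ on each $\elm\in\grid_n$ and the boundary contributions into the flux jump $J(U)(t)$ on the interior sides, so that
\[
  \scp[]{\Resh(\hU)(t)}{v}=\sum_{\elm\in\grid_n}\scp[\elm]{r_\elm}{v}-\sum_{S}\scp[S]{J(U)(t)}{v}\qquad\text{for all }v\in\V .
\]
For the element term I test with $v=b_\elm R_\elm$, where $b_\elm$ is the interior bubble of $\elm$ and $R_\elm$ the best approximation entering $\oscG[\grid_n]$; since $v$ vanishes on $\partial E$ only $\scp[\elm]{r_\elm}{v}$ survives, and norm equivalence on the finite-dimensional polynomial space together with the inverse estimate $\Enorm{b_\elm R_\elm}\lesssim h_\elm^{-1}\norm[\elm]{b_\elm R_\elm}$ yields, at this time, $h_\elm\norm[\elm]{r_\elm}\lesssim\norm[\V^*]{\Resh(\hU)(t)}+h_\elm\norm[\elm]{r_\elm-R_\elm}$, the last term being exactly the element oscillation (signs being immaterial under the norm).

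For the jump on an interior side $S$ I test with $v=b_S\,P J_E$, where $b_S$ is the side bubble supported on the patch of the two elements sharing $S$ and $P J_E$ is a fixed polynomial extension of the best approximation $J_E$; since $b_S$ vanishes on every other side, the localised identity isolates $\scp[S]{J(U)(t)}{v}$ against the patch volume residuals and against $\Resh(\hU)(t)$. Using the scalings $\norm[\elm]{v}\lesssim h_\elm^{1/2}\norm[S]{J_E}$ and $\Enorm{v}\lesssim h_\elm^{-1/2}\norm[S]{J_E}$, together with the element bound already obtained, gives $h_\elm^{1/2}\norm[S]{J(U)(t)}\lesssim\norm[\V^*]{\Resh(\hU)(t)}+\osc$ on the patch. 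Summing over all $\elm$ and $S$, using finite overlap from the uniform shape regularity of $\grids$, and finally invoking \eqref{eq:RadauIIA} in time completes the estimate; all hidden constants depend only on the shape regularity.

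The spatial bubble estimates are routine, so the main obstacle I anticipate is the bookkeeping that justifies the reduction to the quadrature points: one must verify that the pointwise best approximations $R_\elm(t)\in\Ps[2\ell-2](\elm)$ and $J_E(t)\in\Ps[2\ell-1](S)$ depend polynomially (of degree $\le s$) on $t$, so that the squared oscillation is a genuine degree-$2s$ polynomial in time for which RadauIIA is exact, and that the coupling of the side residual back to the interior residual is tracked with the correct powers of $h_\elm$ so that shape regularity alone controls the constant $C$.
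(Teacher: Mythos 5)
Your proposal is correct and takes essentially the same route as the paper: reduce the time-integrated inequality to pointwise estimates at the RadauIIA abscissae (using that all integrands, including the oscillation terms by linearity of the $L^2$-projections defining $R_\elm$ and $J_E$, are polynomials of degree at most $2s$ in $t$, and that the weights $b_j$ are positive), and then invoke the standard elliptic bubble-function efficiency argument. The paper leaves that second step to the cited references (Verf{\"u}rth's book, Mekchay--Nochetto, and the $s=0$ case in Verf{\"u}rth 2003), which you simply spell out in detail.
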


\begin{proof}
 With the same arguments as in the proof of Lemma \ref{l:UpSpat},
  for each $1\le j\le s+1$ it suffices to
  prove that 
  \begin{align*}
    \begin{split}
       C_\grids\sum_{E\in\grid_n}h_\elm^2\norm[\elm]{(\Dt \hU+\L U-f_n)(t^j_n)}^2
       &+h_\elm\norm[\partial E]{J(U)(t^j_n)}^2
       \\
       &\leq
       C\big\{\norm[\V^*]{\Resh(\hU)(t^j_n)}^2+\oscG[\grid_n]^2(f_n,U)(t^j_n)\big\}
   \end{split}
  \end{align*}
  This however, follows with standard
  techniques used in a posteriori estimation of elliptic second order
  problems; see e.g. \cite{Verfuerth:2013,MekchayNochetto:05} and
  compare with the case of the implicit Euler scheme $s=0$ in \cite{Verfurth:03}.
\end{proof}

\subsection{Estimation of the Error\label{s:err_est}}
By means of the decomposition of the residual \eqref{eq:resdeco}, we can
combine the above results to obtain a reliable and
efficient error estimator for \eqref{eq:strong}. To this end, we
introduce the following error indicators for the sake of brevity of presentation: 
For $\grid\in\grids$ and $v\in\V$, the estimator for the initial value is
given by 
\begin{subequations}\label{eq:indicators}
\begin{align}
  \label{df:inest}
  \Einit:=\norm{v-\mathcal{I}_\grid v}^2
\end{align}
For $f\in
L^2(0,T;L^2(\Omega))$, $t_\star\in (0,T)$ and $I=(t_\star,t_\star+\tau]\subset (t_\star,T]$,
the so called consistency
error, which  is inherited by the decomposition of the residual
\eqref{eq:resdeco} is defined by
  \begin{align}
    \label{df:fest}
    \Econs[f,t_\star,\tau]&:=3\inf_{\bar f \in
      \Ps(L^2(\Omega))}\int_{I}\norm{f-\bar f}^2\dt.
    \intertext{For $v^-,v^+\in\V$,
      $\grid\in\grids$, $V\in\Ps(\VG)$, $\elm\in\grid$, and
      $g\in\Ps(L^2(\Omega))$ the indicator
      }
    \label{df:test}
    \Etc[v^+,v^-,\tau]&:=\tau\, 3\, C_{\tau}\, \Enorm{v^--
      v^+}^2
    \intertext{is motivated by
  \eqref{eq:rest} and Lemma \ref{l:UpSpat} suggests to define the
                        spatial indicators by}
    \label{df:sest}
    \begin{split}
      \Espace[V,v^-,t_\star,\tau,g,\grid,\elm]&
      :=3\,C_\grids\int_{I}
      h_\elm^2\norm[\elm]{\Dt \mathcal{V}+\L V-g}^2
      +h_\elm\norm[\partial E]{J(V)}^2\dt
      \\
      &=3\,C_\grids\,\tau\,\sum_{j=1}^{s+1}b_j\Big\{
      h_\elm^2\norm[\elm]{(\Dt \mathcal{V}+\L V-g)(t_\star+c_j\tau)}^2
      \\
      &\phantom{=3\,C_\grids\,\tau\,\sum_{j=1}^{s+1}b_j\Big\{ }+h_\elm\norm[\partial E]{J(V)(t_\star+c_j\tau)}^2\Big\}.
    \end{split}
  \end{align}
 \end{subequations}
 Here we have used, analogously to \eqref{eq:UhLagrange}, that 
  \begin{align}\label{eq:Vh}
    \mathcal{V}(t):= \sum_{j}^{s+1} L_j\big(\tfrac{t-t_\star}{\tau}\big)\,
  V(t_\star+c_j\tau)+L_0\big(\tfrac{t-t_\star}{\tau}\big) v^-\in\Ps[s+1](\V).
  \end{align}

\begin{prop}[Upper Bound]\label{p:upper}
Let $u\in\W$ be the solution of \eqref{eq:weak} and let 
  $\hU\in\W$ be the approximation defined in \eqref{eq:hU} for
  time instances $t_0=0,\ldots,t_N=T$ and time-step sizes
  $\tau_n:=t_{n}-t_{n-1}$, $n=1,\ldots,N$. Then we have the estimate
  \begin{align*}
    \norm[\W]{u-\hU}^2&\le \Einit[u_0,\grid_0]+\sum_{n=1}^N\Big\{
    \Etc[U_{n-1}^+,U_{n-1}^-,\tau_n]
    \\
    &\phantom{\le \Einit[u_0,\grid_0]+\sum_{n=1}^N\Big\{
      } +
    \Espace[U,U_{n-1}^-,\tn,\tau_n,f_n,\grid_n]+\Econs[f,t_{n-1},\tau_n]\Big\}.
  \end{align*}
  
\end{prop}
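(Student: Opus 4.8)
The plan is to combine the abstract error identity from Proposition~\ref{p:err=res} with the residual decomposition~\eqref{eq:resdeco} and then bound each piece by the corresponding indicator. Let me sketch the steps.

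First I would invoke the abstract error bound~\eqref{eq:err<res}, which gives
\[
  \norm[\W]{u-\hU}^2 = \norm{u_0-U_0}^2 + \sum_{n=1}^N \int_{I_n}\norm[\V^*]{\Res(\hU)}^2\dt.
\]
The initial term is immediately controlled: since $U_0 = \Pi_0 u_0 = \mathcal{I}_{\grid_0} u_0$, we have $\norm{u_0-U_0}^2 = \Einit[u_0,\grid_0]$ by definition~\eqref{df:inest}.

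Next I would treat the time-integrated residual on each interval $I_n$. Using the decomposition $\Res(\hU) = \Rest(\hU) + \Resh(\hU) + (f-f_n)$ from~\eqref{eq:resdeco}, I apply the elementary inequality $\norm[\V^*]{a+b+c}^2 \le 3(\norm[\V^*]{a}^2 + \norm[\V^*]{b}^2 + \norm[\V^*]{c}^2)$ pointwise in $t$ and integrate over $I_n$:
\[
  \int_{I_n}\norm[\V^*]{\Res(\hU)}^2\dt \le 3\int_{I_n}\!\Big\{\norm[\V^*]{\Rest(\hU)}^2 + \norm[\V^*]{\Resh(\hU)}^2 + \norm[\V^*]{f-f_n}^2\Big\}\dt.
\]
For the first summand I use the exact identity~\eqref{eq:rest}, which yields $3\int_{I_n}\norm[\V^*]{\Rest(\hU)}^2\dt = 3\tau_n C_\tau \Enorm{U_{n-1}^- - U_{n-1}^+}^2 = \Etc[U_{n-1}^+,U_{n-1}^-,\tau_n]$, matching~\eqref{df:test}. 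For the second summand I invoke Lemma~\ref{l:UpSpat} directly, whose right-hand side (times $3$) is precisely $\Espace[U,U_{n-1}^-,\tn,\tau_n,f_n,\grid_n]$ after the RadauIIA quadrature rewriting in~\eqref{df:sest}. For the third summand I note that $\norm[\V^*]{g}\le C_P\norm{g}$ by Poincar\'e (the embedding $L^2 \hookrightarrow \V^*$), so $3\int_{I_n}\norm[\V^*]{f-f_n}^2\dt$ is bounded by the consistency indicator $\Econs[f,t_{n-1},\tau_n]$ from~\eqref{df:fest}; since $f_n$ is the best approximation, the infimum in~\eqref{df:fest} is attained at $f_n$ and the factor $3$ and constant are absorbed there.

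Summing over $n$ and adding the initial term completes the estimate. \textbf{The main obstacle} I anticipate is bookkeeping the constants so that each indicator's prefactor (the explicit $3$, $C_\tau$, $C_\grids$) is correctly carried through, and in particular justifying the $\V^*$-to-$L^2$ passage for the consistency term: one must verify that the constant from the dual-norm-to-$L^2$ estimate is absorbed into the definition~\eqref{df:fest} (where it is implicitly folded into the factor $3$) rather than left explicit. Everything else is a clean concatenation of already-proven identities and inequalities, so the proof is essentially a matter of assembling the pieces in the right order.
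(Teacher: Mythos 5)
Your proof is correct and is essentially the paper's own argument: Proposition~\ref{p:err=res}, the decomposition~\eqref{eq:resdeco} with the triangle inequality (giving the factor $3$), the identity~\eqref{eq:rest}, and Lemma~\ref{l:UpSpat}, assembled in the same order. One caveat on your closing remark: the $L^2(\Omega)\hookrightarrow\V^*$ embedding constant is \emph{not} ``folded into the factor $3$'' of \eqref{df:fest} --- that $3$ is exactly the triangle-inequality factor --- rather, your proof (and, implicitly, the paper's, which passes silently from $\norm[L^2(I_n;\V^*)]{f-f_n}$ to the $L^2$-norm in \eqref{df:fest}) uses the bound $\Enorm[*]{f-f_n}\le\norm{f-f_n}$, i.e.\ that the \Poincare constant relative to the energy norm is at most one.
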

\begin{proof}
  By the decomposition of the residual \eqref{eq:resdeco} and the triangle inequality, we
  estimate on each interval $I_n$, $n=1,\ldots,N$
  \begin{align*}
    \norm[L^2(I_n;\V^*)]{\Res(\hU)}^2&\leq 3
    \norm[L^2(I_n;\V^*)]{\Rest(\hU)}^2
    +3\norm[L^2(I_n;\V^*)]{\Resh(\hU)}^2
    \\
    &\qquad
    +3     \norm[L^2(I_n;\V^*)]{f-f_n}^2.
  \end{align*}
  Now the assertion follows by Proposition \ref{p:err=res},
  \eqref{eq:rest}, and Lemma \ref{l:UpSpat}. 
\end{proof}

\begin{prop}[Lower Bound]\label{p:lower}
  Supposing the conditions of Proposition \ref{p:upper}, we have 
  \begin{align*}
    \Etc[U_{n-1}^+,U_{n-1}^-,\tau_n]&+
    \Espace[U,U_{n-1}^-,\tn,\tau_n,f_n,\grid_n]
      \\
      &\qquad\leq C\,\Big\{
      \norm[L^2(I_n;\V^*)]{\Dt(u-\hU)}^2+\norm[L^2(I_n;\V)]{u-\hU}^2
      \\
      &\qquad\qquad\quad+\int_{I_n}\oscG[\grid_n]^2(f_n, \hU)\dt+\Econs[f,t_{n-1},\tau_n]\Big\},
  \end{align*}
  where the constant $C$ depends solely on the shape regularity of
  $\grids$ and on $s$.
\end{prop}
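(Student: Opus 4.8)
The plan is to express both indicators through the temporal and spatial residuals and then to separate these two residuals. First, comparing the definition \eqref{df:test} with the identity \eqref{eq:rest} shows that the time indicator is, up to the factor $3$, the squared temporal residual,
\[
  \Etc[U_{n-1}^+,U_{n-1}^-,\tau_n]=3\,\norm[L^2(I_n;\V^*)]{\Rest(\hU)}^2,
\]
while Lemma~\ref{l:LowSpat}, evaluated at the RadauIIA abscissae and summed through \eqref{eq:RadauIIA} exactly as in \eqref{df:sest}, gives a constant $C>0$ with
\[
  \Espace[U,U_{n-1}^-,\tn,\tau_n,f_n,\grid_n]\le C\Big(\norm[L^2(I_n;\V^*)]{\Resh(\hU)}^2+\int_{I_n}\oscG[\grid_n]^2(f_n,\hU)\dt\Big).
\]
Thus it suffices to bound $\norm[L^2(I_n;\V^*)]{\Rest(\hU)}^2+\norm[L^2(I_n;\V^*)]{\Resh(\hU)}^2$ by the stated right-hand side.

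Next I would use the residual decomposition \eqref{eq:resdeco}, which reads $\Rest(\hU)+\Resh(\hU)=\Res(\hU)-(f-f_n)$ on $I_n$, to write
\[
  \norm[L^2(I_n;\V^*)]{\Rest(\hU)}^2+\norm[L^2(I_n;\V^*)]{\Resh(\hU)}^2=\norm[L^2(I_n;\V^*)]{\Res(\hU)-(f-f_n)}^2-2X,
\]
where $X:=\int_{I_n}\scp[]{\Resh(\hU)}{U-\hU}\dt$. By \eqref{df:rest} the function $U-\hU$ is the $\B$-Riesz representative of $\Rest(\hU)$, so $X$ is precisely the $L^2(I_n;\V^*)$ inner product of $\Rest$ and $\Resh$. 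The key step is a \emph{strengthened} Cauchy--Schwarz bound for $X$: since $U-\hU=(U_{n-1}^+-U_{n-1}^-)\,L_0(\tfrac{\cdot-t_{n-1}}{\tau_n})$ and $t\mapsto\scp[]{\Resh(\hU)(t)}{v}$ lies in $\Ps[s]$, the integrand of $X$ factors as $\phi(\hat t)\,L_0(\hat t)$ with $\phi\in\Ps[s]$. Because $L_0$ is the RadauIIA nodal polynomial of exact degree $s+1$, it is not an element of $\Ps[s]$; hence, with $P_s$ the $L^2(0,1)$ projection onto $\Ps[s]$, the constant $\theta_0:=\norm[L^2(0,1)]{P_sL_0}/\norm[L^2(0,1)]{L_0}$ satisfies $\theta_0<1$ and depends only on $s$. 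Replacing $L_0$ by $P_sL_0$ against $\phi$, and then applying Cauchy--Schwarz in time and in space together with \eqref{eq:rest}, I obtain $\abs{X}\le\theta_0\,\norm[L^2(I_n;\V^*)]{\Rest(\hU)}\,\norm[L^2(I_n;\V^*)]{\Resh(\hU)}$, and therefore
\[
  (1-\theta_0)\big(\norm[L^2(I_n;\V^*)]{\Rest(\hU)}^2+\norm[L^2(I_n;\V^*)]{\Resh(\hU)}^2\big)\le\norm[L^2(I_n;\V^*)]{\Res(\hU)-(f-f_n)}^2.
\]

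Finally I would estimate the right-hand side. By the triangle inequality and \eqref{eq:res<err},
\[
  \norm[L^2(I_n;\V^*)]{\Res(\hU)-(f-f_n)}^2\le 4\big(\norm[L^2(I_n;\V^*)]{\Dt(u-\hU)}^2+\norm[L^2(I_n;\V)]{u-\hU}^2\big)+2\norm[L^2(I_n;\V^*)]{f-f_n}^2,
\]
and the Poincar\'e--Friedrichs inequality yields $\norm[\V^*]{g}\le C\norm{g}$ for $g\in L^2(\Omega)$, so the consistency term is controlled by $\norm[L^2(I_n;\V^*)]{f-f_n}^2\le C\int_{I_n}\norm{f-f_n}^2\dt\le C\,\Econs[f,t_{n-1},\tau_n]$, using \eqref{df:fest} and the optimality of $f_n$. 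Collecting all constants gives the assertion. I expect the quantitative separation of the temporal and spatial residuals to be the main obstacle: a plain Cauchy--Schwarz only delivers $\abs{X}\le\norm[L^2(I_n;\V^*)]{\Rest(\hU)}\,\norm[L^2(I_n;\V^*)]{\Resh(\hU)}$, i.e. $\theta_0\le1$, which renders the combined residual bound circular and useless. The entire gain stems from the degree mismatch between $L_0$ (degree $s+1$) and the temporal degree $s$ of $\scp[]{\Resh}{\cdot}$, and the delicate point is to confirm that the resulting $\theta_0$ is strictly below $1$ with a bound depending only on $s$, hence uniform in $n$, $\tau_n$, and the grids $\grid_n$.
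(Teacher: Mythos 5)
Your proof is correct, and it takes a genuinely different route from the paper at the one point where the real work happens: decoupling the temporal from the spatial residual. The paper treats the two indicators asymmetrically. It bounds $\norm[L^2(I_n;\V^*)]{\Resh(\hU)}$ by the triangle inequality applied to \eqref{eq:resdeco} (which brings the temporal indicator onto the right-hand side), and then bounds $\Etc[U_{n-1}^+,U_{n-1}^-,\tau_n]$ by testing with an auxiliary polynomial $\alpha\in\Ps[2s+2]$ satisfying $\alpha\perp\Ps[2s+1]$ and $\int_0^1L_0^2\alpha\dt=1$: the orthogonality annihilates $\int_{I_n}\alpha_n\,\scp[]{\Resh(\hU)}{U-\hU}\dt$ since $\scp[]{\Resh(\hU)}{U-\hU}\in\Ps[2s+1]$, and Cauchy--Schwarz/Young then insert $\norm[L^\infty(0,1)]{\alpha}^2$ into the constant. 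You instead treat the two residuals symmetrically, via polarization in the $L^2(I_n;\V^*)$ inner product and a strengthened Cauchy--Schwarz inequality with angle constant $\theta_0=\norm[L^2(0,1)]{P_sL_0}/\norm[L^2(0,1)]{L_0}<1$. Both arguments exploit exactly the same structural fact, namely that $U-\hU$ is proportional in time to $L_0$, of exact degree $s+1$, while $t\mapsto\scp[]{\Resh(\hU)(t)}{v}$ has degree $s$; but your version buys two things. First, it yields the bound for both residuals in one stroke, so no separate treatment of $\Espace[]$ via \eqref{eq:3} is needed. Second, the constants are explicit and much smaller: by Remark~\ref{r:Ct} one has $P_sL_0=\tfrac{(-1)^s}{2}\lambda_s(2t-1)$, hence $\theta_0^2=\tfrac{2s+3}{4s+4}$, and for $s=0$ this gives $1/(1-\theta_0)\approx 7.5$, to be compared with the factor $600=2\norm[L^\infty(0,1)]{\alpha}^2C_\tau$ entering the paper's estimate \eqref{eq:4}. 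Two small points you should make explicit: your identification of $X$ with the $L^2(I_n;\V^*)$ inner product of $\Rest(\hU)$ and $\Resh(\hU)$ uses that $\B$ is symmetric (true here by assumption on $\Am$), since otherwise the Riesz-map calculation does not produce a symmetric bilinear form on $\V^*$; and, exactly as in the paper's own proof, the conversion of $\norm[L^2(I_n;\V^*)]{f-f_n}^2$ into $\Econs[f,t_{n-1},\tau_n]$ requires the embedding $\norm[\V^*]{g}\le C\norm{g}$, whose constant involves the Poincar\'e--Friedrichs constant and $a_*$, so in both proofs the final constant depends on these data as well --- a shared, minor imprecision in the statement of the proposition rather than a defect of your argument.
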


\begin{proof}
  We first consider the spatial indicators. By Lemma \ref{l:LowSpat}
  there exists $C>0$, such that 
  \begin{align*}
    \Espace[U,U_{n-1}^-,\tn,\tau_n,f_n,\grid_n]\le
    C
    \norm[L^2(I_n;\V^*)]{\Resh(\hU)}^2+C\int_{I_n}\oscG[\grid_n]^2(f_n,
    \hU)\dt.
  \end{align*}
  The first term on the right hand side can be further estimated using
  the decomposition of the residual, the triangle inequality, and
  \eqref{eq:rest} to obtain
  \begin{align}\label{eq:3}
    \begin{split}
    \norm[L^2(I_n;\V^*)]{\Resh(\hU)} 
    &\le\norm[L^2(I_n;\V^*)]{\Res(\hU)}+
    \norm[L^2(I_n;\V^*)]{f-f_n}
    \\
    &\quad + \E{c\tau}(U_{n-1}^-,U_{n-1}^+,\tau_n).
  \end{split}
  \end{align}
  
  It remains to bound the temporal estimator. To this end, we
  introduce a nontrivial auxiliary function $\alpha\in\Ps[2s+2]$ such
  that $\alpha\perp\Ps[2s+1]$ and 
  \begin{align*}
    \int_0^1L_0^2(t)\,\alpha(t)\dt = 1,
  \end{align*}
  which is possible since $L_0^2\in\Ps[2s+2]\setminus \Ps[2s+1]$. 
  Recalling \eqref{eq:rest}, \eqref{df:rest}, and \eqref{eq:resdeco},
  we have for $\alpha_n(t)\definedas
  \alpha\big(\tfrac{t-t_{n-1}}{\tau_n}\big)$ that
  \begin{align*}
    \Etc[U_{n-1}^+,U_{n-1}^-,\tau_n]&
     = C_\tau\Enorm{U_{n-1}^+-U_{n-1}^-}^2\int_{I_n}
    L_0^2\big(\tfrac{t-t_{n-1}}{\tau_n}\big)\,\alpha_n(t)\dt
    \\
    &=C_\tau\int_{I_n}\alpha_n\,\scp[]{\Res(\hU)}{U-\hU} 
    -\alpha_n\,\scp{f-f_n}{U-\hU}\dt  
    \\
    &\quad-C_\tau
    \int_{I_n}\alpha_n\,\scp[]{\Resh(\hU)}{U-\hU}\dt.    
  \end{align*}
  The last term vanishes since
  $\Scp[]{\Resh(\hU)}{U-\hU}\in\Ps[2s+1]$. Using the Cauchy Schwarz and Young inequalities,  
  we can hence estimate
  \begin{align*}
    \Etc[U_{n-1}^+,U_{n-1}^-,\tau_n]\leq
    2C_\tau\norm[L^\infty(0,1)]{\alpha}^2\,
    \Big\{\norm[L^2(I_n;\V^*)]{\Res(\hU)}^2+\norm[L^2(I_n;\V^*)]{f-f_n}^2\Big\}. 
  \end{align*}
  Combining this with \eqref{eq:3}, we arrive at 
  \begin{align}\label{eq:4}
    \begin{split}
    \Etc[U_{n-1}^+,U_{n-1}^-,\tau_n]&+
    \Espace[U,U_{n-1}^-,t,\tau_n,f_n,\grid_n]
      \\
      &\le \Big(C
      \big(1+2\norm[L^\infty(0,1)]{\alpha}^2C_\tau\big)+2\norm[L^\infty(0,1)]{\alpha}^2C_\tau\Big)
      \\
      &\qquad \Big\{ \norm[L^2(I_n;\V^*)]{\Res(\hU)}^2+\norm[L^2(I_n;\V^*)]{f-f_n}^2\Big\}.
    \end{split}
  \end{align}
  Together with Proposition \ref{p:err=res} this is the desired estimate.
\end{proof}

\begin{rem}[Implicit Euler]
  We emphasize that the proof of the lower bound Proposition
  \ref{p:lower} is slightly different from the one in
  \cite{Verfurth:03} and yields different constants also for  
  the  \dGs[0] scheme. To see
  this, we observe that the definition of $\alpha$ implies for $s=0$
  that
  \begin{align*}
    \alpha(t)=30(6t^2-6t+1),\qquad\text{whence}\qquad
    \norm[L^\infty(0,1)]{\alpha}= 30.
  \end{align*}
  Therefore,
  we conclude for the constant in~\eqref{eq:4} with
  $C_\tau=\frac13$ from Remark~\ref{r:Ct}, that
  \begin{align*}
    C
      \big(1+2\norm[L^\infty(0,1)]{\alpha}^2C_\tau\big)+2\norm[L^\infty(0,1)]{\alpha}^2
    C_\tau=
      601\,C +600,
  \end{align*}
  where $C$ is the constant in the estimate of Lemma~\ref{l:LowSpat}.
  In contrast to this, the techniques used in \cite{Verfurth:03} for the
  implicit Euler scheme yield the 
  constant  
  \begin{align*}
    \big(1+7C_\grids^{1/2}\big)^2\,C_\grids^{1/2}\, C^{3/2}\, 12^2.
  \end{align*}
\end{rem}

\begin{rem}[Elliptic Problem]\label{r:elliptic}
  In case of the implicit Euler scheme \dGs[0], it is well known, that
  in each time-step $1\le n\le N$, $\UIn\in \Ps[0](\Vn)=\Vn$ is the
  Ritz approximation to a coercive elliptic problem. Moreover, the
  spatial estimators \eqref{df:sest} are the standard residual based
  estimators for this elliptic problem. This observation transfers to
  the \dGs scheme for $s\ge1$. To see this, we observe that (after
  transformation to the unit interval) \eqref{eq:discrete} is a
  Galerkin approximation to the solution $u_\tau\in\Ps(\V)$ of a
  problem of the kind
  \begin{align}\label{eq:elliptic}
    \begin{split}
      \int_0^1\frac1\tau\scp{\Dt u_\tau}{v} + \bilin{u_\tau}{v}\dt
      &+\frac1\tau\scp{u_\tau(0)}{v(0)}
      \\
      &= \int_0^1\scp{\bar f}{v} \dt +\frac1\tau\scp{v^-}{v(0)}
    \end{split}
  \end{align}
  for all $v\in\Ps(\V)$ and some data $\bar f\in\Ps(L^2(\Omega))$,
  $v^-\in L^2(\Omega)$, and $\tau>0$. The mappings $v\mapsto v(0)$ and
  $v\mapsto\Dt v$ are linear and continuous on $\Ps(\V)$, whence this
  equation can be taken as a vector valued
  linear variational problem of second
  order on $\V^{s+1}$.  Testing with $v=u_\tau$
  proves coercivity
  \begin{align*}
    \begin{split}
      \int_0^1\frac1\tau\scp{\Dt u_\tau}{u_\tau} + \bilin{u_\tau}{u_\tau}\dt
      &+\frac1\tau\scp{u_\tau(0)}{u_\tau(0)}
      \\
      &=\frac1{2\tau}\norm{u_\tau(0)}^2+\frac1{2\tau}\norm{u_\tau(1)}^2+
      \int_0^1\Enorm{u_\tau}^2\dt.
    \end{split}
  \end{align*}
  Obviously, its residual in $V\in\Ps(\V)$ is given by
  \begin{align*}
    \scp[]{\Resh(\mathcal{V})}{v}&=\scp[]{\bar
                                   f-\Dt\mathcal{V}}{v}-\bilin{V}{v},\quad v\in\Ps(\V),
  \end{align*}
  where $\mathcal{V}\in\Ps[s+1](\V)$ is such that
  $\mathcal{V}(c_j)=V(c_j)$, $j=1,\ldots, s$ and
  $\mathcal{V}(0)=v^-$; compare with~\eqref{eq:hU}. 
  Thanks to  Lemmas \ref{l:UpSpat} and \ref{l:LowSpat}, for
  $V\in\Ps(\VG)$, $\grid\in\grids$,  the standard 
  residual based estimator for this problem is given by 
  $\Espace[V,v^-,\tau,0,\bar f,\grid]$.
\end{rem}

\paragraph{Energy Estimation}
We shall now generalise the energy estimate from \cite{KrMoScSi:12} to
higher order $\dGs$ schemes.

\begin{prop}[Uniform global energy estimate]
  \label{prop:uniform_bound}
  Assume $N\in\N\cup\{\infty\}$ and arbitrary time instances
  $0=t_0<\cdots<t_N\le T$ with time -step-sizes
  $\tau_1,\ldots,\tau_N>0$. 
  Let $U_0=\ProG[0]u_0$ and for $1\le n\le N$ let $\UIn\in\Ps(\Vn)$
  be the discrete solutions to \eqref{eq:discrete} and let $\hU\in\W$
  as defined in \eqref{eq:hU}. 
  Then for any $m=1,\dots,N$ we have
  \begin{gather*}
    \sum_{n=1}^m \norm{\Dt\hU}^2 +
    \enorm{U_{n-1}^+-\ProG[n]U_{n-1}^-}^2 
    +\enorm{U_{n}^-}^2 -\enorm{\ProG[n]U_{n-1}^-}^2
    \le \sum_{n=1}^m\int_{I_n}\norm{f_n}^2\dt. 
  \end{gather*}
\end{prop}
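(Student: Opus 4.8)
The plan is to derive the identity by testing the discrete equation \eqref{eq:discrete} on each interval $I_n$ with the natural energy test function and then summing over $n=1,\dots,m$, exploiting a telescoping structure in the terms $\enorm{U_n^-}^2$. First I would choose the test function $V=U_{|I_n}\in\Ps(\Vn)$ in \eqref{eq:discrete}; this is admissible since $U_{|I_n}$ lies in $\Ps(\Vn)$. With this choice the elliptic term becomes $\int_{I_n}\bilin{U}{U}\dt=\int_{I_n}\Enorm{U}^2\dt\ge 0$, which I expect to be discarded (bounded below by zero) after the identity is assembled. The right-hand side is $\int_{I_n}\scp{f_n}{U}\dt$, to which I would apply Cauchy--Schwarz and Young's inequality; but since the statement retains $\norm{f_n}^2$ on the right and $\norm{\Dt\hU}^2$ on the left, I suspect the cleaner route is to test instead with $V=\Dt\hU$ (or a combination), so that the forcing pairs directly against $\Dt\hU$ and Young's inequality produces exactly $\norm{f_n}^2$ together with an absorbable $\tfrac12\norm{\Dt\hU}^2$ — I would experiment with both test functions and keep whichever yields the stated constants.

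The crux is handling the jump term $\scp{\jump{U}_{n-1}}{V(t_{n-1})}=\scp{U_{n-1}^+-U_{n-1}^-}{V(t_{n-1})}$ together with the time-derivative term $\int_{I_n}\scp{\Dt U}{V}\dt$ so that, upon summation, the boundary contributions telescope into $\enorm{U_n^-}^2-\enorm{U_{n-1}^-}^2$-type differences. Here the appearance of $\ProG[n]$ in the statement signals the key subtlety: since $U_{n-1}^-\in\Vn[n-1]$ but the test functions live in $\Vn$, the jump must be interpreted through the $L^2$-projection, i.e. $\scp{U_{n-1}^+-U_{n-1}^-}{V(t_{n-1})}=\scp{U_{n-1}^+-\ProG[n]U_{n-1}^-}{V(t_{n-1})}$ for $V(t_{n-1})\in\Vn$. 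I would use the orthogonality $\scp{U_{n-1}^- -\ProG[n]U_{n-1}^-}{W}=0$ for all $W\in\Vn$ to rewrite the jump in terms of $\ProG[n]U_{n-1}^-$, and then invoke the algebraic identity $\scp{a-b}{a}=\tfrac12\norm{a}^2-\tfrac12\norm{b}^2+\tfrac12\norm{a-b}^2$ to extract precisely the term $\enorm{U_{n-1}^+-\ProG[n]U_{n-1}^-}^2$ appearing in the claim. The relation \eqref{eq:discreteb} between $\Dt\hU$ and $\Dt U$ (obtained via integration by parts and RadauIIA exactness) is what lets me replace $\int_{I_n}\scp{\Dt U}{V}$ by the $\Dt\hU$ expression, which is essential because the final estimate is stated in terms of $\norm{\Dt\hU}^2$.

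The main obstacle I anticipate is bookkeeping the mismatch of spatial spaces across time-steps and making the telescoping clean despite the projection $\ProG[n]$: the terms $\enorm{\ProG[n]U_{n-1}^-}^2$ and $\enorm{U_{n}^-}^2$ must be arranged so that summation collapses them correctly, and one must verify that the energy-norm (rather than $L^2$-norm) versions of the boundary terms emerge with the right signs — this hinges on whether the elliptic bilinear form or the $L^2$ inner product governs each boundary contribution. I would carefully track which inner product each telescoped term inherits, using coercivity $\Enorm{\cdot}^2=\bilin{\cdot}{\cdot}$ from Section~\ref{ss:weak_formulation}, and I expect the nonnegative discarded term $\int_{I_n}\Enorm{U}^2\dt$ and any surplus from Young's inequality to be exactly what closes the estimate to an inequality rather than an identity. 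Once the per-interval identity is established with these pieces isolated, summing over $n=1,\dots,m$ and discarding the nonnegative elliptic term yields the claimed bound.
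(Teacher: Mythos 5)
Your starting point is in the same family as the paper's proof --- the paper also tests with the (projected) time derivative and closes with Young's inequality --- but as written your plan has two genuine gaps, and the second one is the heart of the argument. First, $\Dt\hU_{|I_n}$ is not an admissible test function: because of coarsening, $\hU(t_{n-1})=\Un[n-1]^-\in\Vn[n-1]$ need not lie in $\Vn$, so $\Dt\hU_{|I_n}\notin\Ps(\Vn)$. The correct choice is $V=\ProG[n]\Dt\hU_{|I_n}=\Dt\ProG[n]\hU_{|I_n}\in\Ps(\Vn)$; this, rather than a rewriting of the jump term, is how the projection $\ProG[n]$ enters the final estimate. You flag the space mismatch, but you attach it to the jump instead of to the test function. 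With this $V$ in \eqref{eq:discreteb}, the time-derivative term (which already absorbs the jump from \eqref{eq:discrete}) yields exactly the dissipation term $\int_{I_n}\norm{\ProG[n]\Dt\hU}^2\dt$ --- it does not telescope, contrary to your \emph{crux} paragraph, which attributes the energy telescoping to the $L^2$ pairing of the jump with $\scp{\Dt U}{V}$.

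Second, the energy terms $\enorm{\Un[n-1]^+-\ProG[n]\Un[n-1]^-}^2+\enorm{\Un^-}^2-\enorm{\ProG[n]\Un[n-1]^-}^2$ come from the \emph{elliptic} term $\int_{I_n}\bilin{U}{\ProG[n]\Dt\hU}\dt$, and extracting them is the step your plan does not contain. One integrates by parts in time, obtaining the boundary contribution $\Enorm{\Un^-}^2-\bilin{\Un[n-1]^+}{\ProG[n]\Un[n-1]^-}$ together with $-\int_{I_n}\bilin{\Dt U}{\ProG[n]\hU}\dt$; since $\bilin{\Dt U}{\ProG[n]\hU}_{|I_n}\in\Ps[2s]$ and $\ProG[n]\hU(t_n^j)=U(t_n^j)$ at the RadauIIA nodes (by \eqref{eq:hUa} and $U(t_n^j)\in\Vn$), the quadrature exactness \eqref{eq:RadauIIA} lets one replace $\ProG[n]\hU$ by $U$, whence $\int_{I_n}\bilin{\Dt U}{U}\dt=\frac12\Enorm{\Un^-}^2-\frac12\Enorm{\Un[n-1]^+}^2$; the algebraic identity you quote --- but taken in the energy inner product $\B$, not in $L^2$ --- then produces the three energy terms. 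Without this RadauIIA argument the elliptic term cannot be evaluated, and no choice among your proposed test functions circumvents it. Note also that on this route nothing of the form $\int_{I_n}\Enorm{U}^2\dt\ge0$ is discarded: each time step gives an exact identity, and the only inequality is Young's applied to $\int_{I_n}\scp{f_n}{\ProG[n]\Dt\hU}\dt$. Your expectation that a discarded elliptic term closes the estimate conflates this route with the $V=U$ energy argument, which leads to a different bound altogether (and, if pursued, would not produce $\norm{\Dt\hU}^2$ on the left at all).
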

\begin{proof} 
  We choose $V\definedas\ProG[n]\Dt\hU_{|I_n}\in\Ps(\Vn)$ as a test function in
  \eqref{eq:discreteb} obtaining
  \begin{align}\label{eq:1}
    \int_{I_n}\norm{\ProG[n]\Dt\hU}^2 + \bilin{
    U}{\ProG[n] \Dt\hU}\dt = \int_{I_n} \scp{f_{n}}{\ProG[n]\Dt\hU}\dt. 
  \end{align}
  In order to analyse the second term on the left hand side, we first observe
  that
  $\ProG[n]\Dt\hU_{|I_n}=\Dt\ProG[n]\hU_{|I_n}\in\Ps(\Vn)$. Recalling
  \eqref{eq:hUb} and 
  that $\B:\V\times\V\to\R$ is constant in time, we obtain integrating by
  parts, that 
  \begin{align*}
     \int_{I_n} \bilin{U}{\ProG[n] \Dt\hU}\dt&=
     - \int_{I_n} \bilin{\Dt U}{\ProG[n]\hU}\dt +
     \Enorm{\Un^-}^2-\bilin{\Un[n-1]^+}{\ProG[n]\Un[n-1]^-}. 
     \intertext{Since $\bilin{\Dt U}{\ProG[n]\hU}_{|I_n}\in\Ps[2s]$, we can apply
   \eqref{eq:RadauIIA} and conclude with \eqref{eq:hUa} that}
 \int_{I_n} \bilin{U}{\ProG[n] \Dt\hU}\dt&=
     - \int_{I_n} \bilin{\Dt U}{U}\dt +
     \Enorm{\Un^-}^2-\bilin{\Un[n-1]^+}{\ProG[n]\Un[n-1]^-}
                                           \\
 &=
     \frac12\Enorm{\Un[n-1]^+-\ProG[n]\Un[n-1]^-}^2 
    -  \frac12\Enorm{ \ProG[n]\Un[n-1]^-}^2+  \frac12\Enorm{ \Un^-}^2,
   \end{align*}
   where we used that $\bilin{\Dt \UIn}{\UIn}= \frac12\Dt \Enorm{\UIn}^2$.
   Inserting this in \eqref{eq:1} yields 
   \begin{multline*}
      \int_{I_n}\norm{\ProG[n]\Dt\hU}^2\dt+  \frac12\Enorm{\Un[n-1]^+-\ProG[n]\Un[n-1]^-}^2 
    -  \frac12\Enorm{ \ProG[n]\Un[n-1]^-}^2+  \frac12\Enorm{ \Un^-}^2
    \\
    = \int_{I_n} \scp{f_{n}}{\ProG[n]\Dt\hU}\dt. 
   \end{multline*}
   Estimating the right hand side with the help of the Cauchy-Schwarz
   and the Young
   inequality proves the assertion. 
\end{proof}

\begin{cor}
  \label{cor:uniform_bound}
 Under the conditions of Proposition \ref{prop:uniform_bound}, assume that 
  \begin{align}\label{cond:energ-cont}
    \enorm{\Un[n-1]^-}^2-\enorm{\ProG[n]\Un[n-1]^-}^2 + 
    \frac12\int_{I_n} \norm{\ProG[n]\Dt \hU}^2\dt \ge 0\quad\text{ for}\quad
    n=1,\dots,N.
  \end{align}
  Then we have the estimate 
  \begin{align*}
    \sum_{n=1}^m  \frac12\int_{I_n} \norm{\ProG[n]\Dt \hU}^2\dt +
    \enorm{\Un[n-1]^+-\ProG[n]\Un[n-1]^-}^2 
      \le \norm[\Omega\times(0,t_m)]{f}^2 +
    \enorm{ U_0}^2-\enorm{\Un[m]^-}^2. 
  \end{align*}
  In particular, the series $\sum_{n=1}^N \enorm{\Un[n-1]^+-\ProG[n]\Un[n-1]^-}^2 
  $ is uniformly bounded irrespective of the
  sequence of time-step-sizes used.
\end{cor}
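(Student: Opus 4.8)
The plan is to derive the Corollary directly from Proposition~\ref{prop:uniform_bound} by a telescoping argument, using the hypothesis~\eqref{cond:energ-cont} exactly to absorb the terms that obstruct a clean telescoping. First I would dispose of the right-hand side. Since $f_n=f_{I_n}$ is the $L^2(I_n;L^2(\Omega))$-best approximation of $f$, the orthogonality of the projection gives $\int_{I_n}\norm{f_n}^2\dt\le\int_{I_n}\norm{f}^2\dt$ for each $n$, so that
\begin{align*}
  \sum_{n=1}^m\int_{I_n}\norm{f_n}^2\dt\le\int_0^{t_m}\norm{f}^2\dt=\norm[\Omega\times(0,t_m)]{f}^2,
\end{align*}
which already matches the $f$-contribution on the right-hand side of the Corollary.

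Next I would treat the left-hand side of Proposition~\ref{prop:uniform_bound}, whose critical feature is the mismatched pair $\enorm{\Un^-}^2-\enorm{\ProG[n]\Un[n-1]^-}^2$. The key step is to insert $\pm\enorm{\Un[n-1]^-}^2$ and split
\begin{align*}
  \enorm{\Un^-}^2-\enorm{\ProG[n]\Un[n-1]^-}^2
  =\big(\enorm{\Un^-}^2-\enorm{\Un[n-1]^-}^2\big)
  +\big(\enorm{\Un[n-1]^-}^2-\enorm{\ProG[n]\Un[n-1]^-}^2\big).
\end{align*}
The first bracket telescopes over $n=1,\dots,m$; using $\Un[0]^-=U_0$ it sums to $\enorm{\Un[m]^-}^2-\enorm{U_0}^2$, which, moved to the right, produces exactly the summand $+\enorm{U_0}^2-\enorm{\Un[m]^-}^2$ of the Corollary. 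The second bracket is the obstruction to telescoping and is precisely where the hypothesis must enter.

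Finally I would combine that second bracket with half of the time-derivative contribution. Writing $\int_{I_n}\norm{\ProG[n]\Dt\hU}^2\dt=\tfrac12\int_{I_n}\norm{\ProG[n]\Dt\hU}^2\dt+\tfrac12\int_{I_n}\norm{\ProG[n]\Dt\hU}^2\dt$, the assumption~\eqref{cond:energ-cont} asserts for each $n$ that
\begin{align*}
  \enorm{\Un[n-1]^-}^2-\enorm{\ProG[n]\Un[n-1]^-}^2+\tfrac12\int_{I_n}\norm{\ProG[n]\Dt\hU}^2\dt\ge0,
\end{align*}
so discarding these nonnegative summands from the left leaves exactly $\sum_{n=1}^m\big(\tfrac12\int_{I_n}\norm{\ProG[n]\Dt\hU}^2\dt+\enorm{\Un[n-1]^+-\ProG[n]\Un[n-1]^-}^2\big)$, i.e.\ the claimed bound. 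For the concluding statement I would drop the nonnegative $\Dt\hU$-terms and the nonnegative $\enorm{\Un[m]^-}^2$, estimate $\norm[\Omega\times(0,t_m)]{f}\le\norm[\Omega\times(0,T)]{f}$, and note that the resulting bound on $\sum_{n=1}^m\enorm{\Un[n-1]^+-\ProG[n]\Un[n-1]^-}^2$ is independent of $m$ and of the step sizes; since the partial sums are monotone and bounded, letting $m\to N$ yields uniform boundedness. The only real subtlety is recognising that the split above isolates precisely the quantity controlled by~\eqref{cond:energ-cont}, so that no further structural estimate is required.
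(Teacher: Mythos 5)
Your proof is correct and is essentially the paper's own argument: both rest on inserting $\pm\Enorm{\Un[n-1]^-}^2$ so that one bracket telescopes to $\Enorm{\Un[m]^-}^2-\Enorm{U_0}^2$ while the other bracket, paired with half of the $\Dt\hU$-term, is exactly the nonnegative quantity in~\eqref{cond:energ-cont} and can be discarded, after which Proposition~\ref{prop:uniform_bound} gives the bound. The only difference is organisational (the paper first sums and telescopes the assumption, then inserts it into the Proposition), and you additionally spell out the best-approximation step $\sum_n\int_{I_n}\norm{f_n}^2\dt\le\norm[\Omega\times(0,t_m)]{f}^2$, which the paper uses implicitly.
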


\begin{proof}
  Summing up the nonnegative terms in \eqref{cond:energ-cont} yields
  \begin{gather*}
    0\le \sum_{n=1}^{m}\enorm{\Un[n-1]^-}^2-\enorm{\ProG[n]\Un[n-1]^-}^2 + 
     \frac12\int_{I_n} \norm{\ProG[n]\Dt \hU}^2\dt,
  \end{gather*}
  which is equivalent to
  \begin{gather*}
    \enorm{\Un[m]^-}^2 - \enorm{\UIn[0]}^2\le\sum_{n=1}^{m}\enorm{\Un[n-1]^+}^2-\enorm{\ProG[n]\UIn[n-1]^-}^2 + 
     \frac12\int_{I_n} \norm{\ProG[n]\Dt \hU}^2\dt.
  \end{gather*}
  Using this in the estimate of Proposition
  \ref{prop:uniform_bound} yields the desired estimate.
\end{proof}  

Having a closer look at the indicator $\E{c\tau}$ we note that, since
we allow for coarsening, it is not a pure temporal error
indicator. Coarsening may cause the loss of information and 
to few information my lead to wrong decisions within the adaptive method.
For this reason we use the triangle inequality to
split 
\begin{subequations}
  \label{eq:indicators_2}
\begin{align}
\Etc[v^-,v^+,\tau,\grid]\le \Ecoarse[v^-,\tau,\grid] + \Etime 
\end{align}
into a measure
  \begin{align}
     \label{eq:Ec}
     \Ecoarse[v^-,\tau,\grid] &\definedas\sum_{\elm\in\grid}\Ecoarse:=
     6\,C_{\tau}\sum_{\elm\in\grid}\tau\Enorm[\elm]{\ProG v^- - v^-}^2
     \intertext{for the coarsening error and}
    \label{eq:Et}
    \Etime &\definedas 6\,C_{\tau}\tau\Enorm{v^+ - \ProG v^-}^2,
  \end{align}
\end{subequations}
which serves as an indicator for the temporal error. This allows us to control the coarsening error separately.

Assuming that \eqref{cond:energ-cont} holds, Corollary~\ref{cor:uniform_bound} provides
control of the sum of the time error indicators
\begin{math}
  \Etime[{\Un[n-1]^+},{\Un[n-1]^-},\tau_n,\grid_n] = 6C_{c\tau}\tau\enorm{{\Un[n-1]^+}-\ProG[n]{\Un[n-1]^-}}^2
\end{math}.  Assumption \eqref{cond:energ-cont} would trivially be
satisfied for the Ritz-projection $R_n\Un[n-1]^-$ of $\Un[n-1]^-$ into
$\Vn$, since $\enorm{R_n\Un[n-1]^-}\le\enorm{\Un[n-1]^-}$. The
$L^2$-projection $\Pron\Un[n-1]^-$, however, does not satisfy
this monotonicity property in general and therefore coarsening may
lead to an increase of energy.  The algorithm presented below ensures
that \eqref{cond:energ-cont} is fulfilled at the end of every time-step.
To this end, using the notation~\eqref{eq:Vh}, we 
define for $V\in\Ps(\V(\grid))$, $v^-\in\V$,
$t_\star\in(0,T)$, $I=(t_\star,t_\star+\tau]\subset(t_\star,T]$, $\grid\in\grids$, and $\elm\in\grid$, the indicators
\begin{equation*}
  \Estar[V,v^-,t_\star,\tau,\grid,\elm] \definedas \enorm[\elm]{\ProG v^-}^2 - \enorm[\elm]{v^-}^2
  - \frac12\int_I \norm[E]{ \ProG\Dt \mathcal{V}}^2\dt,
\end{equation*}
as well as the convenient notation
\begin{math}
  \Estar[V,v^-,t_\star,\tau,\grid] \definedas \sum_{E\in\grid} \Estar[V,v^-,t_\star,\tau,\grid,\elm]
\end{math}.
Condition \eqref{cond:energ-cont} is then equivalent to
$\Estar[U,{\Un[n-1]^-},t_{n-1},\tau_n,\grid_n]\le 0$, $n=1,\dots,N$.
Note that the term $- \int_{I_n}\norm[E]{ \ProG \Dt \mathcal{V}}^2$ may
compensate for $\enorm[\elm]{\ProG v^-}^2 > \enorm[\elm]{v^-}^2$.

\section{The adaptive algorithm \TAFEM}
\label{sec:tafem}
Based on the observations in the previous section and a new concept for
marking we shall next describe the adaptive algorithm \TAFEM~in this section. 
In contrast to the algorithms presented in~\cite{KrMoScSi:12} and
\cite{ChenFeng:04}, the
\TAFEM is based on a different marking philosophy.
In fact, they mark according to the same indicators, 
\eqref{df:fest}-\eqref{df:sest} and \eqref{eq:indicators_2}, but 
in contrast to \cite{KrMoScSi:12,ChenFeng:04}, the \TAFEM uses an $L^2$ instead of an
$L^\infty$ strategy. 
Philosophically, this aims at an $L^2$
rather than an
$L^\infty$ equal-distribution of the error in time; compare also with
the introductory 
section~\ref{sec:introduction}. 

We follow a bottom 
up approach, i.e., we first state basic properties on some rudimentary modules
that are treated as black box routines, then describe three core modules
in detail, and finally combine these procedures in the adaptive algorithm
\TAFEM. 

\subsection{Black Box Modules}

As in~\cite{KrMoScSi:12}, we use 
standard modules \ADAPTINIT{}, \COARSEN[], \MARKREFINE{}, and \SOLVE\
as black box routines. In particular, we use the subroutine \MARKREFINE{} in an
object-oriented fashion, \ie the functionality of \MARKREFINE{} changes
according to its arguments. We next state the basic properties of these
routines.

\begin{ass}[Properties of modules]\label{ass:modules}
  We suppose that all rudimentary modules terminate with an output
  having the following properties.
  \begin{enumerate}[(1)]
  \item For a given initial datum $u_0\in L^2(\Omega)$ and tolerance $\TOLinit>0$,
    the output
    \begin{displaymath}
      (\Un[0],\gridn[0]) = \ADAPTINIT{(u_0,\gridinit,\TOLinit)}
    \end{displaymath}
    is a refinement $\gridn[0]\ge\gridinit$ and an approximation
    $\Un[0]\in\V(\gridn[0])$ to $u_0$ such that $\Einit[{u_0,\gridn[0]}]\le\TOLinit^2$.
  \item For given $g\in L^2(\Omega)$, 
    $\bar f\in\Ps(L^2(\Omega))$, $t_\star\in(0,T)$,
      $I=(t_\star,t_\star
      +\tau]\subset
    (t_\star,T]$, and $\grid\in\grids$, the output
    \begin{displaymath}
      U_{I} = \SOLVE{(g, \bar f, t, \tau, \grid)}
    \end{displaymath}
    is the solution $U_{I}\in\Ps(I,\VG)$ to the discrete elliptic problem 
    \begin{displaymath}
      \int_{I}\scp{\Dt U_{I}}{V} +
      \bilin{U_{I}}{V}\dt +\scp{U_{I}(t)}{V(t)}= \scp{g}{V} + \int_{I}\scp{\bar f}{V} \dt
    \end{displaymath}
          for all $V\in\Ps(\VG)$; compare with~\eqref{eq:discrete}. Hereby we assume exact integration and linear algebra.
  \item For a given grid $\grid\in\grids$ and a discrete function $V\in\V(\grid)$ the output
    \begin{displaymath}
      \grid_* = \COARSEN[{(V,\grid)}]
    \end{displaymath}
    satisfies $\grid_*\le\grid$.
  \item For a given grid $\grid$ and a set of indicators
    $\{\E{\elm}\}_{\elm\in\grid}$ the output 
    \begin{displaymath}
      \grid_* = \MARKREFINE{(\{\E{\elm}\}_{\elm\in\grid}, \grid)}\in\grids
    \end{displaymath}
    is a conforming refinement of $\grid$, where at least one
    element in the subset
    $\argmax\{\E{\elm}:\elm\in\grid\}\subset\grid$ has been refined.
  \item For given grids $\grid,\gridold\in\grids$ and a set of indicators
    $\{\E{\elm}\}_{\elm\in\grid}$, the output 
    \begin{displaymath}
      \grid_* = \MARKREFINE{(\{\E{\elm}\}_{\elm\in\grid}, \grid,\gridold)}\in\grids
    \end{displaymath}
    is a conforming refinement of $\grid$, where at least one element
    of the set  $\{\elm\in\grid\colon
    h_{\grid|\elm}>h_{\gridold|\elm}\}$ of
      \emph{coarsened elements (with respect to $\gridold$)}  
      is refined.
 \end{enumerate}
\end{ass}
For a more detailed description of these modules see Section~3.3.1 of~\cite{KrMoScSi:12}.

\subsection{The Core Modules}
The first core module \texttt{CONSISTENCY} controls  the consistency error
$\E{f}$.  Recalling its definition in \eqref{df:fest}, we see that the
consistency error is solely influenced by the time-step
size and can be computed without solving expensive discrete systems. 
Therefore, \texttt{CONSISTENCY} is used in the initialization of each time
step to adjust the time-step-size such that the local consistency
indicator $\Econs$ is below a local tolerance
$\tolcons$. 
It is important to notice that this module follows the classic
\emph{thresholding} algorithm, which ensures quasi-optimal order of
convergence in terms of the degrees of freedom; compare e.g. with \cite{BiDaDePe:02}.

\begin{algorithm}
  \caption{Module \texttt{CONSISTENCY} (Parameters $\sigma,\kappa_1\in(0,1)$
    and $\kappa_2>1$)}
  \label{alg:consistency}
  \baselineskip=15pt
  \flushleft
  \CONSISTENCY[(f,t,\tau,\tolcons)]
  \begin{algorithmic}[1]
    \STATE{compute $\Econs$}
    \WHILE[$\bigstar$\,enlarge $\tau$]{$\Econs < \sigma\,\tolcons^2$ and $\tau < T-t$} \label{line:CONS_IF_start}
     \STATE \label{line:CONS_IF_taun} $\tau = \min\{\kappa_2\tau, T-t\}$ 
    \STATE{compute $\Econs$}
    \ENDWHILE \label{line:CONS_IF_end}
    \WHILE[$\bigstar$\,reduce $\tau$]{$\Econs > \tolcons^2$}
    \label{line:CONS_while2_start}
    \STATE $\tau = \kappa_1\tau$
    \STATE{compute $\Econs$}
    \ENDWHILE\label{line:CONS_while2_end}
    \STATE $\bar f = f_{[t,{t+\tau}]} $
    \RETURN $\bar f,\tau$
  \end{algorithmic}
\end{algorithm}

We start with termination of the module \texttt{CONSISTENCY}.
\begin{lem}[Termination of \texttt{CONSISTENCY}]
  \label{lem:termination-consistency}
  Assume $f \in L^2((0,T);L^2(\Omega))$.  Then for any $t\in(0,T)$ and $\tauin\in(0,T-t]$, 
  \begin{displaymath}
    (\bar f,\tau)=\texttt{CONSISTENCY}(f,t,\tauin,\tolcons)
  \end{displaymath}
  terminates
  and
  \begin{align}\label{Cons:ineqtol}
    \Econs \le \tolcons^2.
  \end{align}
\end{lem}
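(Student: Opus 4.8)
The plan is to reduce the whole analysis to the single scalar quantity
\begin{align*}
  \phi(\tau)\definedas 3\int_t^{t+\tau}\norm{f(s)}^2\,{\rm d}s,
\end{align*}
and to exploit that it tends to zero as $\tau\downarrow0$. First I would record the elementary bound obtained by testing the infimum in the definition \eqref{df:fest} of the consistency indicator against the admissible polynomial $\bar f\equiv0$ (legitimate since $s\ge0$, so the zero polynomial lies in $\Ps(L^2(\Omega))$):
\begin{align*}
  \Econs[f,t,\tau]\le\phi(\tau).
\end{align*}
Because $\norm{f(\cdot)}^2\in L^1(0,T)$, absolute continuity of the Lebesgue integral shows that $\phi$ is continuous, nondecreasing in $\tau$, and satisfies $\phi(\tau)\to0$ as $\tau\downarrow0$. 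Consequently there is a threshold $\tau^\star>0$ with $\Econs[f,t,\tau]\le\tolcons^2$ for every $\tau\le\tau^\star$. This is the sole analytic input and, I expect, the main (if mild) obstacle: it is precisely the step where only $L^2$-regularity of $f$ in time is needed rather than the $H^1$-regularity used in the $L^\infty$-strategy of \cite{ChenFeng:04,KrMoScSi:12}.

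Next I would treat the two \textbf{while}-loops separately, noting that they run in sequence, so it suffices to show that each terminates. In the first loop every pass replaces $\tau$ by $\min\{\kappa_2\tau,T-t\}$ with $\kappa_2>1$; starting from $\tauin>0$ the value of $\tau$ grows geometrically and is capped at $T-t$, so after finitely many passes $\tau=T-t$, the guard $\tau<T-t$ fails, and the loop exits irrespective of the behaviour of $\Econs$. In the second loop every pass multiplies $\tau$ by $\kappa_1\in(0,1)$, hence $\tau\to0$ geometrically; after finitely many passes $\tau\le\tau^\star$, whence $\Econs[f,t,\tau]\le\tolcons^2$ and the guard $\Econs>\tolcons^2$ fails. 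Thus both loops, and therefore the module \texttt{CONSISTENCY}, terminate.

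Finally, for the bound \eqref{Cons:ineqtol} I would argue from the exit condition of the second loop. If that loop executed at least once, it terminates exactly when $\Econs[f,t,\tau]\le\tolcons^2$. If it never executed, then its entry guard $\Econs[f,t,\tau]>\tolcons^2$ was already false at the value of $\tau$ inherited from the first loop, so again $\Econs[f,t,\tau]\le\tolcons^2$. In either case the returned $\tau$ satisfies the claim, and since the returned $\bar f=f_{[t,t+\tau]}$ is the best approximation realising the infimum in \eqref{df:fest}, the indicator $\Econs$ associated with the output pair $(\bar f,\tau)$ is exactly the quantity just bounded. The only bookkeeping point worth stating explicitly is that the loops do not interact — the algorithm never returns to the enlarging phase — so no oscillation between the two phases can occur.
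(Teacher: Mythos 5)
Your proposal is correct and takes essentially the same approach as the paper, whose entire proof is the one-line observation that $\Econs$ is monotone and tends to zero as $\tau\to0$. You have merely made explicit the details the paper leaves implicit: the bound $\Econs \le 3\int_t^{t+\tau}\norm{f}^2\dt$ obtained from the admissible choice $\bar f\equiv 0$, absolute continuity of the Lebesgue integral, and the finite-step termination of each of the two sequential loops.
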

\begin{proof}
The proof is straightforward since $\Econs$ is monotone non-increasing and  $\Econs \rightarrow 0$ when  $\tau\rightarrow 0$.
\end{proof}

Obviously, a local control of the form~\eqref{Cons:ineqtol}  does
  not guarantee, that the global consistency error is below some prescribed tolerance $\TOLcons$.
  For this reason, we first precompute some local tolerance
  $\tolcons$ from the global tolerance $\TOLcons$  by the following module $\TOLFIND$.
\begin{algorithm}[H]
  \caption{\TOLFIND (Parameters: $\tilde\tau_0$)}
  \label{alg:TOLFIND}
  \baselineskip=15pt
  \flushleft
  \TOLFIND$(f,T,\TOLcons)$
  \begin{algorithmic}[1]
    \STATE initialize   $N_f$ and  set
    $\tolcons=\TOLcons$, 
    $\tilde t_0 =0$,
    \label{lines:tolfind-start-init}
    \LOOP\label{line:tol-loop}
    \STATE $\epsilon=n=0$
    \REPEAT\label{line:time-loopTF}
    \STATE $n = n+1$
    \STATE $(f_n,\tilde\tau_n) = \CONSISTENCY[({f,\tilde t_{n-1},\tilde\tau_{n-1}}, \tolcons)]$
    \STATE $\epsilon =\epsilon+ \E{f}^2(f,\tilde t_{n-1},\tilde\tau_n)$
    \UNTIL{$\tilde t_n=\tilde t_{n-1}+\tilde\tau_n <
      T$}   \label{line:time-loop-endTF}
    \STATE $N_f=n$ 
    \IF{$\epsilon >\frac12 \TOLcons^2$}\label{line:tf-second-case}
            \STATE  $\tolcons^2=\frac12\tolcons^2$ 
                   \ELSE
           \STATE  
           \textbf{break}\COMMENT{$\bigstar$\,std.~exit}\label{line:TOLFIND-break3} 
    \ENDIF
    \ENDLOOP \label{line:tol-loop-end}
    \STATE $\tolcons^2=\min\{\tolcons^2 ,\  \frac{\TOLcons^2}{2N_f}\}$ \label{line:tol-def}
    \RETURN $\tolcons$
  \end{algorithmic}
\end{algorithm}

The next result states that if all local consistency indicators 
are  below the threshold $\tolcons$ then
the accumulation of the consistency indicators stays indeed below the
prescribed global tolerance $\TOLcons$.

\begin{lem}[Termination of \texttt{TOLFIND}]
  \label{lem:termination-tolfind}
  Assume $f \in L^2((0,T);L^2(\Omega))$.  Then for any $\TOLcons>0$,
  we have that 
  \begin{displaymath}
    \tolcons=\texttt{TOLFIND}(f,T,\TOLcons)>0
  \end{displaymath}
  terminates. Moreover, let $0= t_0< t_1<\cdots<  t_N=T$ be arbitrary 
  with $\tau_n=\tn-\tn[n-1]$, $n=1,\ldots,N$, then 
  \begin{align}\label{Cons:ineq}
    \Econs[f,{\tn[n-1]},\tau_n]\le \tolcons^2,~n=1,\ldots, N  \quad\Rightarrow\quad
    \sum_{n=1}^N \Econs[f,{\tn[n-1]},\tau_n] \le \TOLcons^2.
  \end{align}
\end{lem}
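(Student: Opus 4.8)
The plan is to prove the two assertions of Lemma~\ref{lem:termination-tolfind} separately: first the termination of \texttt{TOLFIND}, and then the implication~\eqref{Cons:ineq}. The implication is actually the easier part and I would dispatch it first, since it fixes in my mind what the final choice of $\tolcons$ must achieve.

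For the implication, assume that the local bound $\Econs[f,{\tn[n-1]},\tau_n]\le\tolcons^2$ holds for every $n=1,\dots,N$ on an \emph{arbitrary} partition $0=t_0<\dots<t_N=T$. Summing over $n$ immediately gives $\sum_{n=1}^N\Econs[f,{\tn[n-1]},\tau_n]\le N\,\tolcons^2$. The difficulty is that $N$ is the cardinality of a \emph{user-supplied} partition, which need not coincide with the internal count $N_f$ computed by \texttt{TOLFIND}. So the real content is to bound $N$ in terms of $N_f$. Here I would exploit the fact that the local consistency indicator is monotone non-increasing in $\tau$ (as already used in Lemma~\ref{lem:termination-consistency}) together with the final line~\ref{line:tol-def}, where $\tolcons^2$ is set to at most $\frac{\TOLcons^2}{2N_f}$. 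I expect the key observation to be that any admissible partition satisfying the local bound can be compared to the one generated internally: since \texttt{CONSISTENCY} enlarges $\tau$ greedily until the indicator would exceed $\sigma\tolcons^2$, the internally generated steps are in a precise sense the \emph{largest} steps compatible with the threshold, so that a partition meeting the hypothesis cannot use more than $N_f$ intervals. Granting $N\le N_f$, the final choice $\tolcons^2\le\frac{\TOLcons^2}{2N_f}$ yields $\sum_{n=1}^N\Econs\le N_f\cdot\frac{\TOLcons^2}{2N_f}=\frac12\TOLcons^2\le\TOLcons^2$, which proves~\eqref{Cons:ineq}.

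For termination, the structure is the outer \texttt{LOOP} (lines~\ref{line:tol-loop}--\ref{line:tol-loop-end}), which repeatedly halves $\tolcons^2$ until the accumulated error $\epsilon$ drops below $\frac12\TOLcons^2$, and the inner \texttt{REPEAT} loop that marches through $(0,T)$ generating the internal partition. First I would argue that the inner loop terminates for each fixed $\tolcons$: by Lemma~\ref{lem:termination-consistency} each call to \texttt{CONSISTENCY} terminates and returns $\tilde\tau_n>0$, and since the enlargement step forces $\tilde\tau_n\ge$ some positive amount whenever the indicator is small, the times $\tilde t_n$ increase and reach $T$ in finitely many steps. The crux is then termination of the outer loop, \ie showing that $\epsilon>\frac12\TOLcons^2$ cannot persist forever as $\tolcons^2$ is halved. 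Here I would use the $L^2$-integrability of $f$: the total consistency error over any partition obeying the threshold is controlled by the absolute continuity of the Lebesgue integral $\int_0^T\norm{f-\bar f}^2\dt$, so that as $\tolcons\to0$ the accumulated $\epsilon$ tends to $0$. This is precisely the mechanism that the $L^2$-strategy buys us, and it is exactly the place where only $L^2$-regularity (rather than $H^1$) of $f$ is needed, matching the claim in the introduction.

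I expect the \emph{main obstacle} to be making the comparison $N\le N_f$ rigorous, because the user partition and the internally generated partition are a priori unrelated: the lemma must hold for every admissible external partition simultaneously, whereas $N_f$ is defined by one specific greedy march. The honest way to handle this is to show that the greedy \texttt{CONSISTENCY} enlargement produces the coarsest possible threshold-admissible partition, so that $N_f$ is a \emph{maximal} interval count and hence dominates $N$; alternatively, if a clean monotonicity of this kind is unavailable, one falls back on the safeguard $\tolcons^2\le\frac{\TOLcons^2}{2N_f}$ combined with a worst-case bound on $N$. A secondary technical point is that halving $\tolcons^2$ changes the internal partition and thus $N_f$ itself, so the termination argument for the outer loop must be decoupled from the final relabelling on line~\ref{line:tol-def}; I would treat the outer-loop exit (driven by $\epsilon\le\frac12\TOLcons^2$) and the final definition of $\tolcons$ as two independent guarantees, which together deliver both termination and~\eqref{Cons:ineq}.
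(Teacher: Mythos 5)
There is a genuine gap, and it sits exactly where you flagged your ``main obstacle'': the comparison $N\le N_f$ is not just hard to make rigorous, it is \emph{false}. The hypothesis of \eqref{Cons:ineq} is an upper bound on the local indicators, and such a bound is preserved under arbitrary refinement of the partition: since $\Econs[f,t_\star,\tau]$ is sub-additive (splitting an interval and using the restriction of its best approximation in $\Ps(L^2(\Omega))$ as a competitor on each piece can only decrease the sum), every refinement of an admissible partition is again admissible. Hence admissible partitions with $N\gg N_f$ exist, no greedy-maximality property of \texttt{CONSISTENCY} can rescue the counting argument, and the fallback ``worst-case bound on $N$'' does not exist either. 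The bound $\sum_n\Econs[f,t_{n-1},\tau_n]\le N\,\tolcons^2$ is therefore useless; the sum stays bounded as $N\to\infty$ only because of sub-additivity, not because $N$ is controlled. The paper's proof works differently: it splits the \emph{user} partition relative to the \emph{internal} one into $\I_{in}$, the intervals $(t_{n-1},t_n]$ contained in some internal interval $(\tilde t_{m-1},\tilde t_m]$, and $\I_{out}$, the rest. For $\I_{in}$, sub-additivity gives $\sum_{n\in\I_{in}}\Econs[f,t_{n-1},\tau_n]\le\sum_{m=1}^{N_f}\Econs[f,\tilde t_{m-1},\tilde\tau_m]=\epsilon\le\tfrac12\TOLcons^2$ by the exit condition of the outer loop. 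For $\I_{out}$, each such interval must contain an internal breakpoint in its interior, so $\#\I_{out}\le N_f$ --- it is only the \emph{straddling} count, never $N$, that gets compared to $N_f$ --- and each term is $\le\tolcons^2\le\TOLcons^2/(2N_f)$ by line~\ref{line:tol-def}, giving another $\tfrac12\TOLcons^2$.

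A secondary weakness: for termination of the outer loop, ``absolute continuity of the Lebesgue integral'' alone does not suffice, because the internal partition changes every time $\tolcons$ is halved, so you cannot pass to a limit along a fixed partition. The paper makes this rigorous with the same two-scale device: fix a dyadic reference partition of $2^M$ intervals whose total consistency error is below $\tfrac14\TOLcons^2$ (density of piecewise polynomials over dyadic intervals in $L^2(0,T;L^2(\Omega))$), then split the internal intervals into those contained in dyadic cells (total $\le\tfrac14\TOLcons^2$ by sub-additivity) and at most $2^M$ straddling ones (total $\le 2^M\tolcons^2$), so that $\epsilon\le\tfrac14\TOLcons^2+2^M\tolcons^2\le\tfrac12\TOLcons^2$ once $\tolcons^2<2^{-(M+2)}\TOLcons^2$. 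Your intuition that only $L^2$-regularity of $f$ is needed is correct, and your reading of the inner-loop termination is broadly right (though the paper's contradiction argument with the parameters $\sigma,\kappa_1,\kappa_2$ and the two cases $t^\star<T$, $t^\star=T$ is more delicate than your sketch suggests), but both the implication and the outer-loop termination hinge on the contained/straddling decomposition plus sub-additivity, which is the idea missing from your proposal.
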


\begin{proof}
  The proof is divided into three steps. 

  \step{1} We show that the process from lines~\ref{line:time-loopTF}
  to~\ref{line:time-loop-endTF} terminates. To this end, we recall the
  parameters $\sigma,\kappa_1\in(0,1)$ and $\kappa_2>1$ from
  $\CONSISTENCY[(f,{\tilde t_{n-1}},\tilde\tau_{n-1},\tolcons)]$. 
  We argue by contradiction
  and  assume that an infinite monotone sequence $\{\tilde t_n\}_{n\ge0}
  \subset[0,T]$ is constructed by \TOLFIND~with
  $\lim_{n\to\infty}\tilde t_n=t^\star\in(0,T]$.

Let us first assume that
  $t^\star <T$, and let $\ell_0,m_0\in\N$ such that $\kappa^{\ell_0}_2\ge
  \kappa_1^{-m_0}\ge \kappa_2$.   Then there exists $n_0\in\N$, such that 
  \begin{align}\label{eq:ell_0}
    t^\star+\kappa^{\ell_0}_2
    \tilde\tau_{n}<T\qquad\text{and}\qquad
    \Econs[f,{\tilde t_n},\kappa^{\ell_0-1}_2
    \tilde\tau_{n} ]\le \sigma\tolcons^2
  \end{align}
  for all $n\ge n_0$ since $\tilde\tau_{n}\to 0$ and 
  \begin{align*}
    \Econs[f,{\tilde t_n},\kappa^{\ell_0-1}_2
    \tilde\tau_{n} ]\le
    \norm[\Omega\times(\tilde t_n,\tilde t_n+\kappa^{\ell_0-1}_2
   \tilde\tau_{n})]{f}^2\to 0\quad\text{as}~n\to\infty.
  \end{align*}
  Therefore, from the loops in lines \ref{line:CONS_IF_start} to
  \ref{line:CONS_IF_end} and in \ref{line:CONS_while2_start} to
  \ref{line:CONS_while2_end} of $\CONSISTENCY[]$, we conclude
  that 
  $\tilde\tau_{n_0+1}\ge
  \kappa_2^{\ell_0}\kappa^{m_0}_1\tilde\tau_{n_0}\ge
  \tilde\tau_{n_0}$. Indeed, we have by~\eqref{eq:ell_0} and 
   \begin{align*}
  \Econs[f,{\tilde t_{n_0}},\kappa^{\ell}_2\kappa^{m_0}_1
    \tilde\tau_{n_0} ]\le
    \Econs[f,{\tilde t_{n_0}},\kappa^{\ell-1}_2\tilde\tau_{n_0} ]\le
     \sigma\tolcons^2
  \end{align*}
  that $\ell\ge \ell_0$ and $m\le m_0$.
  Consequently, we have 
  $\tilde\tau_{n}\ge \tilde\tau_{n_0}$, for all $n\ge n_0$ by induction.
  This is the contradiction.

  Let now $t^\star=T$, then with similar arguments as before, we conclude
  that $\Econs[f,{\tilde t_n},T-{\tilde t_n}]\le \sigma\tolcons^2$ for some
  $n\in\N$, and we have from line~\ref{line:CONS_IF_taun} of
  \CONSISTENCY[], that
  $\tilde\tau_{n}=T-\tilde t_n$, which contradicts the assumption in this case.

\step{2} We next check that the condition of line~\ref{line:tf-second-case}
is violated after finite many steps. 
Since the span of characteristics of dyadic intervals is
dense in $L^2(0,T)$, we  can choose $M>0$, such that the squared 
consistency error on
the grid of $2^M$ uniform intervals is below $\frac14 \TOLcons^2$. 
We split the intervals generated in $\TOLFIND(f,T,\tolcons)$ into 
\begin{align*}
\I_{in}:=\big\{n: (\tilde t_{n-1},\tilde t_n]\subset T(m2^{-M},(m+1)2^{-M}]~\text{for some}~m\in\{0,\ldots,2^M-1\}\big\}
\end{align*}
and $\I_{out}:=\{1,\ldots,N_f\}\setminus\I_{in}$
according to whether or not they are included in one of the dyadic
intervals. Therefore, we have, with the monotonicity of the consistency
error and $\#\I_{out}\le 2^M$, that 
\begin{align*}
  \epsilon =
  \sum_{n\in\I_{in}}\Econs[f,{\tilde t_{n-1}},\tilde\tau_{n}]+\sum_{n\in\I_{out}}\Econs[f,{\tilde
  t_{n-1}},\tilde\tau_{n}]\le
  \frac14\TOLcons^2 + 2^M\tolcons^2.
\end{align*}
Taking $\tolcons^2< 2^{-(M+2)}\TOLcons^2$, we see that the condition of
line~\ref{line:tf-second-case} is violated, which proves the assertion.

\step{3} Combining the above steps, we conclude that \TOLFIND \
terminates and it remains to prove~\eqref{Cons:ineq}. To this end, we
proceed similarly as in \step{2} and let 
\begin{align*}
  \I_{in}:=\big\{n: (t_{n-1},t_n]\subset (\tilde t_{m-1},\tilde
  t_m]~\text{for some}~m\in\{1,\ldots,N_f\}\big\}. 
\end{align*}
and $\I_{out}:=\{1,\ldots, N\}\setminus\I_{in}$. 
By monotonicity, we have $\sum_{n\in\I_{in}}\Econs[f,{ t_{n-1}},\tau_{n}]\le
\sum_{n=1}^{N_f}\Econs[f,{ \tilde t_{n-1}},\tilde
\tau_{n}]\le\TOLcons^2/2$ and thus the assertion follows from
\begin{align*}
  \sum_{n=1}^N\Econs[f,{t_{n-1}},\tau_{n}]&=
  \sum_{n\in\I_{in}}\Econs[f,{ t_{n-1}},\tau_{n}]+\sum_{n\in\I_{out}}\Econs[f,{
  t_{n-1}},\tau_{n}]
                                            \\
  &\le \frac{\TOLcons^2}2 + N_f\tolcons^2 = \frac{\TOLcons^2}2 +
    N_f\frac{\TOLcons^2}{2N_f}\le \TOLcons^2.\qedhere
\end{align*}
\end{proof}

\begin{rem}[Estimation of $\tolcons$ under regularity assumptions]\label{rem:underregular}
Supposing the regularity assumption $f \in
H^s((0,T);L^2(\Omega))$, $s\in(0,1]$, the following idea may be used
as an alternative for the estimation of $\tolcons$ with \TOLFIND.

Let $\delta>0$. Then using Lemma~\ref{lem:termination-consistency}
together with  \Poincare's inequality in $H^s$ and the fact that there
are at most $ \frac{T}{\delta}$ disjoint intervals of length $\delta$
in $(0,T]$, we obtain
\begin{align*}
  \sum_{n=1}^N\Econs[f,{t_{n-1}},\tau_{n}]&=
  \sum_{\tau_n>\delta}\Econs[f,{ t_{n-1}},\tau_{n}]+\sum_{\tau_n\leq\delta}\Econs[f,{
  t_{n-1}},\tau_{n}]
                                            \\
  &\le \frac{T}{\delta}\tolcons^2 +\sum_{\tau_n\leq\delta} \tau_n^{2s}
    \norm[H^s(t_{n-1},t_n,L^2(\Omega))]{f}^{2} 
  \\ 
 &= \frac{T}{\delta}\tolcons^2 + \delta^{2s}  \norm[H^s(0,T,L^2(\Omega))]{f}^{2}.
\end{align*}
 By choosing $\delta= \left(\frac{T \, \tolcons}{
     \norm[H^s(0,T,L^2(\Omega))]{f}}\right)^{\frac{2}{2s+1}}$, the
 previous estimate turns into 
 \begin{align*}
  \sum_{n=1}^N\Econs[f,{t_{n-1}},\tau_{n}]&\leq  2\,T^{\frac{2s}{2s+1}} \, \norm[H^s(0,T,L^2(\Omega))]{f}^{\frac{2}{2s+1}}  \tolcons^{\frac{4s}{2s+1}}.
\end{align*}
In other words, if a priori knowledge 
on the regularity of the right hand side is available then
\TOLFIND \ can be replaced by the somewhat simpler term
$$
\tolcons^2 =
2^{-\frac{2s+1}{2s}}\,T^{-1}
\norm[H^s(0,T,L^2(\Omega))]{f}^{-\frac{1}{s}}
\,\TOLcons^{\frac{2s+1}{s}}.$$ 
\end{rem}

We turn to the module \texttt{ST\_ADAPTATION}, listed in
Algorithm~\ref{alg:TS_ADAPTATION}, which handles a single time-step.
The module adapts the grid and the time-step-size according to the 
indicators involving the discrete solution of the current time-step,
namely the space indicator $\E{\grid}$ and 
the separated coarsening and time
indicators  $\E{c}$ and $\E{\tau}$. 
The routine requires right at te start of each iteration the computation of the discrete solution on
the actual grid and with the current time-step-size; see
line~\ref{line:TSA-solve}. Note that in
 \texttt{ST\_ADAPTATION} only refinements are performed (both in space and in
time). 
Recalling the discussion in the introductory section,
Section~\ref{sec:introduction}, we aim to use a thresholding algorithm
for the  indicators $\E{\tau}$, in order to equally distribute the
time error. To this end, we first need to guarantee $\E{*}\le
0$ in order to control the global time error 
with the help of the uniform energy estimate form
Corollary~\ref{cor:uniform_bound}. Since for neither  the
space nor the coarsening errors  there is a similar control available, 
we relate the corresponding indicators to the time
or the consistency indicator, i.e. to
adapt the spatial triangulation until 
\begin{align}\label{eq:l2strat}
\E{c}^2,\E{\grid}^2\le
\E{\tau}^2+\E{f}^2.
\end{align}
Here we have invoked the consistency indicator $\E{f}$ on the right
hand side although it is controlled 
by \texttt{CONSISTENCY} outside \texttt{ST\_ADAPTATION}
-- note that $\E{f}$ does not depend on the discrete solution. In
fact, from the uniform
energy estimate, Corollary~\ref{cor:uniform_bound}, we have that 
$\E{\tau}$ vanishes faster than $\E{f}$ by one order, when no additional regularity of $f$
is assumed. Consequently, the time-step size is dictated by $\E{f}$,
which may leed to $\E{\tau}\ll\tolst$. Thanks to
Lemma~\ref{lem:termination-tolfind}, we expect that \eqref{eq:l2strat}
leads to an equal distribution of the errors in time
in most cases. However, the case
$\max\{\E{\tau},\E{f}\}\ll\min\{\tolst,\tolcons\}$ cannot be avoided
theoretically, hence we have accomplished~\eqref{eq:l2strat}
with a safeguard $L^\infty$ marking; compare with
lines \ref{line:TSA-Espace-condition}
and~\ref{line:TSA-Ecoarse-condition} of \texttt{ST\_ADAPTATION}.

Note that in the above discussion, we have concentrated
  on an equal distribution in time and have tacitly assumed that
  in each time-step the local space indicators are optimally
  distributed, which is motivated by the optimal convergence analysis
  for elliptic problems; compare e.g.  with \cite{Stevenson:07,CaKrNoSi:08,DiKrSt:16}.

\begin{algorithm}
  \caption{Module \texttt{ST\_ADAPTATION} (Parameter
    $\kappa\in(0,1)$
   )}
  \label{alg:TS_ADAPTATION}
  \baselineskip=15pt
  \flushleft{\STADAPTATION[(U_t^-, f,  t, \tau, \grid,\gridold,\tolst)]}
  \algsetup{indent=1.5em}
  \begin{algorithmic}[1]
    \STATE compute $\Econs$
    \LOOP  
    \STATE $I=[t,t+\tau]$
    \STATE $\bar f =  f_I$
    \STATE $U_{I} = \SOLVE{(U_t^-, \bar f, t,\tau,
      \grid)}$ \label{line:TSA-solve}
    \STATE $U_t^+=\lim_{s\searrow t}U_I(s)$
    \STATE compute $\{\Espace[U_{I},U_t^-,t,\tau,\bar
    f,\grid,\elm]\}_{\elm\in\grid}$, $\{\Estar[{U_t^+,
      U_t^-,\tau,\grid,E}]\}_{\elm\in\grid}$
   $\Etime[U_t^+,U_t^-,\tau,\grid]$,  and
    $\{\Ecoarse[U_t^-,\tau,\grid,E]\}_{E\in\grid}$    \label{line:TSA-est-space-tc} 
    \IF
    {$ \Etime[U_t^+,U_t^-,\tau,\grid] >
      \tolst^2
      $} 
    \label{line:TSA-Etime-condition}
    \STATE $\tau =
    \kappa\tau$ \COMMENT{\boxed{\textsf{A}}} \label{line:TSA-adapttau}
    \STATE compute $\Econs$
    \ELSIF
    {$ \Espace[U_{I},U_t^-,t,\tau,\bar
      f,\grid] > 
     \Etime[U_t^+,U_t^-,\tau,\grid]+\Econs+ \tau \tolst
     $}  \label{line:TSA-Espace-condition}
    \STATE $\grid =
    \MARKREFINE{(\{\Espace[U_{I},U_t^-,t,\tau,\bar f,\grid,\elm]\}_{\elm\in\grid},
      \,\grid)}$ \COMMENT{\boxed{\textsf{B}}}
    \ELSIF
    {$ \Ecoarse[U_t^-,\tau,\grid]>
      \Etime[U_t^+,U_t^-,\tau,\grid] +\Econs+ \tau \tolst
      $}  
    \label{line:TSA-Ecoarse-condition}
    \STATE $\grid=\MARKREFINE{(\{\Ecoarse[U_t^-,\tau,\grid,\elm]\}_{\elm\in\grid},
      \,\grid)}$ \COMMENT{\boxed{\textsf{C}}}
      \ELSIF
    {$\Estar[{U_t^+, U_t^-,\tau,\grid}]>0$} \label{line:TSA-Estar-condition}
    \STATE $\grid = \MARKREFINE{(\{\Estar[{U_t^+,
        U_t^-,\tau,\grid,\elm}]\}_{\elm\in\grid},
      \grid,\gridold)}$\COMMENT{\boxed{\textsf{D}}} 
    \ELSE
    \STATE \textbf{break}\COMMENT{$\bigstar$\,exit\,}\label{line:TSA-break1}
    \ENDIF  
     \ENDLOOP \label{line:TSA-outer-end}
    \RETURN $U_{I},\tau,\bar f,\grid$
  \end{algorithmic}
\end{algorithm}

\begin{rem}\label{rem:custimisation}
  We note that the \textbf{if} conditions in
  lines~\ref{line:TSA-Espace-condition}
  and~\ref{line:TSA-Ecoarse-condition} of \STADAPTATION[] may involve additional
  parameters. For instance, line \ref{line:TSA-Ecoarse-condition} may be
  replaced by 
  \begin{algorithmic}[1]
    \setcounter{ALC@line}{14}
    \STATE{\textbf{else if}
    {$  \Ecoarse[U_t^-,\tau,\grid]\ge 
     \gamma_{c}\,
     \Etime[U_t^+,U_t^-,\tau,\grid] + \rho_c\Econs+\sigma_c \tau \tolst$}
   \textbf{then}}  
  \end{algorithmic}
  with $\gamma_c,\rho_c,\sigma_c>0$ and similar for the space
  indicator $\est_\grid$ in line~\ref{line:TSA-Espace-condition} with constants $\gamma_\grid,\rho_\grid,
  \sigma_\grid>0$.  This requires some modifications of the \TAFEM, 
  which would make the presentation more technical.  
  For the sake of clarity of the presentation, we decided to skip
  these customisation possibilities; compare also with
  Remark~\ref{rem:custimisation2}.  
\end{rem}

\subsection{The main module \TAFEM}

We are now in the position to formulate the \TAFEM in
Algorithm~\ref{alg:TAFEM} below. 

In the initialization phase
the given tolerance $\TOL>0$ is split into 
tolerances 
$\TOLinit,\TOLcons,\TOLst>0$.  Next, \ADAPTINIT{} provides a
sufficiently good approximation $\Un[0]$ of the initial datum $u_0$.
Then the time-step iteration is entered, where each single time-step
consists of the following main steps.  We first initialize the time-step
size by \texttt{CONSISTENCY} and then conduct one coarsening step with
\texttt{COARSEN}.
The adaptation of the grid and time-step-size with respect to the indicators for
the spatial, temporal, and coarsening error is done by
\texttt{ST\_ADAPTATION}.
\begin{algorithm}[H]
  \caption{\TAFEM}
  \label{alg:TAFEM}
  \baselineskip=15pt
  \begin{algorithmic}[1]
    \STATE initialize $\grid_\text{init}$, $\tau_0$ and set $t_0 =0$,
    $n=0$ \label{lines:astfem-start-init}
    \STATE split tolerance $\TOL>0$ such that
    \begin{math}
      \TOLinit^2 + 
      3\,\TOLcons^2 + \TOLst^2 = \TOL^2 \label{TAFEM:splitTOL}
    \end{math}
    \STATE $\tolcons=\TOLFIND(f,T,\TOLcons)$
    \STATE $(U_0^-,\grid_0) = \ADAPTINIT{(u_0, \gridinit,\TOLinit)}$
    \label{lines:astfem-end-init}
    \STATE compute $C_T:= 
    6\,\sqrt{6 \,C_{c\tau} \, T}  \left( \norm[\Omega\times(0,T)]{f}^2 +
    \enorm{ U_0^-}^2 \right)^\frac12   + 2\,T $\label{TAFEM:C_T}
    \REPEAT\label{line:time-loop}
    \STATE $n = n+1$
    \STATE $\tau_n=\min\{\tau_{n-1},T-t\}$
    \STATE $\tau_n = \CONSISTENCY[({f,t_{n-1},\tau_{n-1}}, \tolcons)]$
    \label{line:astfem-end-I} 
    \STATE $\gridn =  \COARSEN[{(U_{n-1}^-,\gridn[n-1])}]$ \label{line:astfem-start-I}
    \STATE $(U_{|I_n}, \tau_n, f_n,\gridn) = \STADAPTATION[{(\Un[n-1]^-,t_n,\tau_n,
      f,
      \gridn,\gridn[n-1],\TOLst^2/C_T)}]$  \label{line:astfem-IIa}
    \STATE $U_{n}^-=U_{|I_n}(t_{n-1}+\tau_n)$
    \UNTIL{$t_n=t_{n-1}+\tau_n < T$}
  \end{algorithmic}
\end{algorithm}

\section{Convergence}
\label{sec:convergence}
In this section, we first prove that the core modules and \TAFEM
terminate and then verify that the estimators and thus the error is below the
given tolerance. Throughout the section we suppose that the black-box
modules satisfy Assumption~\ref{ass:modules}.

Before turning to the main module \texttt{ST\_ADAPTATION}, as an
auxiliary result, we shall
consider convergence of the adaptive finite element method for 
stationary elliptic problems of the kind~\eqref{eq:elliptic}, which have to be solved
in each timestep.
\begin{algorithm}
  \caption{\texttt{AFEM}}
  \label{alg:AFEM}
  \flushleft{\texttt{AFEM}$(v^-, \bar f,  t, \tau, \grid^0)$}
  \algsetup{indent=1.5em}
  \begin{algorithmic}[1]
    \STATE set $k=0$
    \LOOP
    \STATE  $U_\tau^k = \SOLVE{(v^-, \bar f, 0,\tau,
      \grid^k)}$
    \STATE  compute $\{\Espace[U_\tau^k,v^-,0,\tau,\bar
    f,\grid,\elm]\}_{\elm\in\grid}$,
    \STATE $\grid^{k+1} =
    \MARKREFINE{(\{\Espace[U_\tau^k,v^-,0,\tau,\bar
      f,\grid^k,\elm]\}_{\elm\in\grid}, 
      \,\grid^k)}$
    \STATE $k=k+1$
    \ENDLOOP
  \end{algorithmic}
\end{algorithm}

\begin{prop}[Convergence for the Elliptic
  Problem]\label{P:adapt-ellipt}
  Suppose that $U_t^- \in L^2(\Omega)$, $\bar f\in\Ps(L^2(\Omega))$, and
  $\tau>0$. Then, starting from any grid $\grid^0\in\grids$ we have
  for the sequence $\{\grid^k ,U_\tau^k\}_{k\ge 0}\subset \grids\times
  \Ps(\V)$ generated by \texttt{AFEM}$(v^-,\bar f,t,\tau,\grid^0)$, that 
  \begin{displaymath}
    \Espace[U_\tau^k,v^-,\tau,t,\bar f,\grid^K] \rightarrow
    0\quad\text{as}~k\to\infty. 
  \end{displaymath}
\end{prop}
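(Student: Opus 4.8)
The plan is to read \texttt{AFEM} as a standard conforming adaptive finite element iteration for the coercive linear variational problem~\eqref{eq:elliptic} identified in Remark~\ref{r:elliptic}. Indeed, after freezing $v^-$, $\bar f$, $t$ and $\tau$, the RadauIIA identity~\eqref{eq:RadauIIA} turns~\eqref{eq:discreteb} into an $(s+1)$-fold coupled spatial problem which, by Remark~\ref{r:elliptic}, is a bounded and coercive second order problem on the product space, with $\Espace$ its residual based estimator (Lemmas~\ref{l:UpSpat} and~\ref{l:LowSpat}). Since \MARKREFINE{} refines at least one element of maximal indicator (Assumption~\ref{ass:modules}(4)), the convergence $\Espace\to0$ is an instance of the plain convergence theory for adaptive methods applied to coercive problems under maximum marking \cite{MoSiVe:08,Siebert:11}; the work consists in checking that theory's hypotheses in the present vector-valued, non-symmetric situation.

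First I would prove convergence of the discrete solutions. By Remark~\ref{r:elliptic} the bilinear form underlying~\eqref{eq:elliptic} is bounded and coercive, so $U_\tau^k\in\Ps(\V(\grid^k))$ is its unique Galerkin approximation. Refinement by bisection yields nested spaces $\Ps(\V(\grid^0))\subseteq\Ps(\V(\grid^1))\subseteq\cdots$, whence C\'ea's lemma makes $\{U_\tau^k\}_k$ quasi-best approximations; since Galerkin approximations over an increasing chain of closed subspaces converge to the Galerkin solution over the closure, $U_\tau^k\to U_\tau^\infty$ in the energy norm, where $U_\tau^\infty$ solves~\eqref{eq:elliptic} on the limit space $\V_\infty:=\overline{\bigcup_{k\ge0}\Ps(\V(\grid^k))}$. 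I record two consequences: the family $\{U_\tau^k\}_k$ is bounded, and on every element $E$ that is never refined after some step the restriction $U_\tau^\infty|_E$ is a polynomial of the fixed degree, being an $H^1(E)$-limit of such polynomials.

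Next I would follow the dichotomy of \cite{MoSiVe:08}. Monotone refinement yields a pointwise non-increasing mesh-size function with limit $h_\infty\ge0$, and I split $\grid^k$ into the elements that survive in all later meshes and those refined again at a later step; on the latter $h_\infty=0$. On a surviving element the mesh is eventually fixed, so by the previous paragraph its local indicator converges to the corresponding indicator of $U_\tau^\infty$, and maximum marking forces $\max_{E\in\grid^k}\Espace[U_\tau^k,v^-,t,\tau,\bar f,\grid^k,\elm]\to0$ — any element retaining a uniformly positive indicator would infinitely often realise the maximum and hence be refined, contradicting survival — so each surviving indicator vanishes in the limit. On the refined region the mesh becomes arbitrarily fine, hence $\V_\infty$ provides test functions there at that scale; the Galerkin orthogonality of $U_\tau^\infty$ in $\V_\infty$ together with $U_\tau^k\to U_\tau^\infty$ makes $\Resh$ small in the local dual norm, and the efficiency estimate of Lemma~\ref{l:LowSpat} bounds the indicators on these elements by this vanishing quantity plus oscillation. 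Summing both parts gives $\Espace[U_\tau^k,v^-,t,\tau,\bar f,\grid^k]\to0$.

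The step I expect to require the most care is the refined region. The interior residual in~\eqref{df:sest} contains $\L V$, i.e.\ second spatial derivatives, and is therefore \emph{not} continuous under the $H^1$-type convergence of Step~1; moreover $U_\tau^\infty$ has no $H^2$ regularity there, so one cannot control this contribution by inserting the strong residual of the limit. The way around, as in \cite{MoSiVe:08,Siebert:11}, is to stay at the level of the dual norm of $\Resh$ and to use the quasi-interpolation that the arbitrarily fine parts of $\V_\infty$ allow on this region, so that Galerkin orthogonality renders the tested residual negligible. The remaining ingredients are routine: the data oscillation generated by $\bar f\in\Ps(L^2(\Omega))$ and the flux jumps obey the same surviving/refined split, and the non-symmetry and $(s+1)$-vector structure of~\eqref{eq:elliptic} cause no difficulty once the coercivity established in Remark~\ref{r:elliptic} is available.
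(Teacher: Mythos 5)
Your plan follows the same route as the paper's proof, which consists of exactly the reduction you set up: Remark~\ref{r:elliptic} identifies~\eqref{eq:elliptic} as a coercive problem whose standard residual estimator is $\Espace[\cdot]$, and Lemmas~\ref{l:UpSpat} and~\ref{l:LowSpat} together with Assumption~\ref{ass:modules}~(4) are used to verify the hypotheses of \cite[Theorem~2.2]{Siebert:11}, which is then invoked as a black box. You instead unfold that black box along the lines of \cite{MoSiVe:08,Siebert:11}, and most of your unfolding is the correct mechanism: the limit space $\V_\infty$ with $U_\tau^k\to U_\tau^\infty$, the surviving/refined dichotomy, and, on the refined region, Galerkin orthogonality in $\V_\infty$ combined with the localised efficiency of Lemma~\ref{l:LowSpat} and the $h$-weighted oscillation.

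There is, however, one genuine logical gap: your justification that maximum marking forces $\max_{\elm\in\grid^k}\Espace[U_\tau^k,v^-,t,\tau,\bar f,\grid^k,\elm]\to 0$, namely that ``any element retaining a uniformly positive indicator would infinitely often realise the maximum and hence be refined''. This inference fails as stated: a surviving element $\elm$ whose indicator stays above some $\delta>0$ need never realise the maximum, because the maximum may at every step be attained by a freshly created element (a child of a previously refined one) with an even larger indicator; nothing in the marking assumption alone prevents the sequence of maximal indicators from staying bounded away from zero forever while $\elm$ survives untouched. Ruling out precisely this scenario is the heart of \cite{MoSiVe:08,Siebert:11}, and the argument runs in the opposite direction to yours: one first proves that the indicators of the \emph{marked} elements tend to zero --- marked elements are refined, so a bisection-tree volume-counting argument gives that their mesh sizes tend to zero, whereupon local stability of the indicators (inverse estimates), the convergence $U_\tau^k\to U_\tau^\infty$, and absolute continuity of the integral force their indicators to vanish --- and only afterwards does Assumption~\ref{ass:modules}~(4) (the marked set contains a maximiser) yield that the global maximum, and hence every limit indicator on a surviving element, vanishes. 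Your own refined-region analysis contains the estimate needed for this (apply it to the marked elements, which lie in the refined region), but in your plan the surviving-region conclusion precedes that analysis and rests solely on the circular parenthetical; the two steps must be re-ordered, establishing the marked/refined-element estimate first and deriving the maximum-marking step from it.
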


\begin{proof}
  Recalling Remark~\ref{r:elliptic}, we have that
  $\Espace[U_\tau^k,v^-,\tau,0,\bar f,\grid^K]$ are the standard
  residual based a
  posteriori error estimators for the coercive
  problem~\eqref{eq:elliptic}. From Lemmas~\ref{l:UpSpat}
  and~\ref{l:LowSpat} and Assumption~\ref{ass:modules} on
  \MARKREFINE{}, we have that the conditions of \cite[Theorem
  2.2]{Siebert:11} are satisfied. This yields the assertion.
\end{proof}

\begin{lem}[Termination of \texttt{ST\_ADAPTATION}]
  \label{lem:termination-tsa}
  For any $t\in(0,T)$, $\tauin\in(0,T-t]$, $\grid,\gridold\in\grids$,
  and $U_t^-\in\V(\gridold)$, we have that  
  \begin{displaymath}
    (U_I,\tau,\bar f,\grid)=\STADAPTATION[{(U_t^-,f,t,\tauin,\gridin,\gridold,
      \tolst)}]
  \end{displaymath}
  terminates. 
  Moreover, we have $\grid\ge\grid_0$, $\Estar[{U_t^+, U_t^-,\tau,\grid}]\le0$,
  \begin{align*}
    &\tauin\ge \tau \ge  
    \min\left\{\tauin,\frac{\kappa\,\tolst^2}{ 6 \big(
    \norm[\Omega\times(t,t+\tau)]{f}^2 + 
    \enorm{ U_t^-}^2 \big)}\right\},
  \end{align*}
  and the indicators satisfy the tolerances
  \begin{align*}
    \Etime[U_t^+,U_t^-,\tau,\grid] &\le
    \tolst^2,
    \\
    \Espace[U_{I},U_t^-,t,\tau,\bar f,\grid]&\leq
        \Etime[U_t^+,U_t^-,\tau,\grid]
      +\Econs  + \tau \tolst,
    \\ 
     \Ecoarse[U_t^-,\tau,\grid]&\leq 
        \Etime[U_t^+,U_t^-,\tau,\grid]+
      \Econs  + \tau \tolst,
  \end{align*}
  where $U_t^+=\lim_{s\searrow t}U_{(t,t+\tau]}(s)$.
\end{lem}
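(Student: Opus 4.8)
The plan is to establish termination first and then read off the stated properties from the structure of the loop in \texttt{ST\_ADAPTATION}. The module is a single \textbf{loop forever} containing a cascade of \textbf{if}/\textbf{elsif} branches labelled \textsf{A}--\textsf{D}, and the loop exits (line~\ref{line:TSA-break1}) precisely when none of the four conditions holds. I would therefore argue that each branch can be taken only finitely often, so that after finitely many iterations control reaches the \textbf{break}. First I would treat branch~\textsf{A}: each time line~\ref{line:TSA-Etime-condition} fires, the time-step is reduced by the factor $\kappa\in(0,1)$. Recalling the definition \eqref{eq:Et}, $\Etime[U_t^+,U_t^-,\tau,\grid]=6\,C_{c\tau}\,\tau\,\enorm{U_t^+-\ProG U_t^-}^2$, and using the coercivity/energy bound from Proposition~\ref{prop:uniform_bound} applied to the single step on $(t,t+\tau]$ to control $\enorm{U_t^+-\ProG U_t^-}^2$ by $\norm[\Omega\times(t,t+\tau)]{f}^2+\enorm{U_t^-}^2$, one sees $\Etime$ depends at most linearly on $\tau$ (up to the data term, which only decreases as $\tau$ shrinks). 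Hence for $\tau$ small enough the condition $\Etime>\tolst^2$ must fail; this simultaneously yields the asserted lower bound on $\tau$, namely $\tau\ge\min\{\tauin,\kappa\,\tolst^2/(6(\norm[\Omega\times(t,t+\tau)]{f}^2+\enorm{U_t^-}^2))\}$, since the last reduction step dividing by $\kappa$ must have left $\tau/\kappa$ still violating the tolerance.

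Once $\tau$ is frozen (branch~\textsf{A} no longer taken), the remaining branches~\textsf{B}, \textsf{C}, \textsf{D} only refine the spatial grid and never touch $\tau$, so I would argue on the fixed interval $(t,t+\tau]$ that these cannot cycle indefinitely. For branch~\textsf{B} this is exactly Proposition~\ref{P:adapt-ellipt}: with $\tau$, $\bar f$, and $U_t^-$ fixed, repeatedly applying \MARKREFINE{} to the spatial indicators $\{\Espace\}$ drives $\Espace[U_I,U_t^-,t,\tau,\bar f,\grid]\to0$ by convergence of \texttt{AFEM} for the coercive elliptic problem \eqref{eq:elliptic}; since the right-hand side $\Etime+\Econs+\tau\tolst$ of the branch~\textsf{B} condition is bounded below by the positive quantity $\tau\tolst$ (note $\Econs\ge0$ and, after branch~\textsf{A} is done, $\Etime\ge0$), the condition must eventually fail. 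The subtlety is that branches~\textsf{C} and \textsf{D} also modify the grid, so strictly speaking the grid is not monotone under a single branch; however, all of \textsf{B}, \textsf{C}, \textsf{D} produce refinements ($\grid\le\grid_*$) by Assumption~\ref{ass:modules}, the spaces are nested, and on a fixed grid-hierarchy $\Espace$, $\Ecoarse$, and $\Estar$ all decrease under refinement. I would make this precise by observing that each of the three spatial quantities is driven to zero (or below the relevant threshold) by sufficiently fine refinement, and that each branch, when triggered, strictly refines at least one marked element; combining this with a convergence-of-indicators argument yields termination of the inner spatial loop.

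For branches~\textsf{C} and \textsf{D} I would argue analogously. Branch~\textsf{C} refines until the coarsening indicator $\Ecoarse[U_t^-,\tau,\grid]=6\,C_\tau\sum_\elm\tau\enorm[\elm]{\ProG U_t^--U_t^-}^2$ drops below $\Etime+\Econs+\tau\tolst$; since refining the grid drives $\ProG U_t^-\to U_t^-$ in energy (as $\V(\grid)$ eventually resolves $U_t^-\in\V(\gridold)$, the coarsened data), $\Ecoarse\to0$ and the condition fails. Branch~\textsf{D} refines until $\Estar[U_t^+,U_t^-,\tau,\grid]\le0$; recalling the definition $\Estar=\enorm{\ProG v^-}^2-\enorm{v^-}^2-\tfrac12\int_I\norm{\ProG\Dt\mathcal V}^2$, the offending positive term $\enorm{\ProG U_t^-}^2-\enorm{U_t^-}^2$ vanishes as $\ProG U_t^-\to U_t^-$, so $\Estar$ becomes nonpositive after enough refinement, giving $\Estar\le0$ at exit. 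Because $U_t^-\in\V(\gridold)$ is a finite element function, all three limits are attained after finitely many bisections, so the inner loop terminates. The main obstacle is the bookkeeping that intermingled triggering of \textsf{B}, \textsf{C}, \textsf{D} cannot produce an infinite refinement sequence; I would handle this cleanly by noting that refinement is monotone, that on any fixed grid each branch either fires and strictly refines or does not fire, and that the quantity $\Espace+\Ecoarse+\max\{\Estar,0\}$ is monotonically driven below $\tau\tolst$ by refinement, so only finitely many refinements occur. At the exit all four conditions are violated, which is exactly the list of asserted inequalities; monotonicity of refinement gives $\grid\ge\grid_0$, completing the proof.
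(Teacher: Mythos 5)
Your proposal follows the same skeleton as the paper's proof --- show that each of the four branches can fire only finitely often, so the loop exits through the \textbf{break}, and the negated \textbf{if}-conditions at exit are exactly the asserted inequalities; your treatment of branch \boxed{\textsf{B}} (convergence of \texttt{AFEM} via Proposition~\ref{P:adapt-ellipt} against the positive floor $\tau\tolst$ in the threshold) is essentially the paper's argument. The first genuine gap is in branch \boxed{\textsf{A}}: you invoke Proposition~\ref{prop:uniform_bound} to bound $\enorm{U_t^+-\ProG U_t^-}^2$ by $\norm[\Omega\times(t,t+\tau)]{f}^2+\enorm{U_t^-}^2$, but that proposition only yields this bound with $\enorm{\ProG U_t^-}^2$ in place of $\enorm{U_t^-}^2$. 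Since $\grid$ may still be coarser than $\gridold$ at this stage, the $L^2$-projection $\ProG U_t^-$ can have \emph{larger} energy norm than $U_t^-$ --- precisely the phenomenon that the indicator $\est_*$ and branch \boxed{\textsf{D}} exist to control; see the discussion following Corollary~\ref{cor:uniform_bound}. Passing to $\enorm{U_t^-}^2$ requires hypothesis \eqref{cond:energ-cont}, i.e.\ the negation of the condition in line~\ref{line:TSA-Estar-condition}, which is why the paper cites Corollary~\ref{cor:uniform_bound} rather than Proposition~\ref{prop:uniform_bound}. Your ordering makes this gap structural: you dispose of \boxed{\textsf{A}} first and ``freeze'' $\tau$ before analysing \boxed{\textsf{C}} and \boxed{\textsf{D}}, but the uniform data bound needed to stop \boxed{\textsf{A}} --- and to obtain the asserted lower bound for $\tau$ with the constant $6\big(\norm[\Omega\times(t,t+\tau)]{f}^2+\enorm{U_t^-}^2\big)$ --- is not available while coarsening effects persist: the term $\enorm{\ProG U_t^-}^2$ is grid-dependent, changes under the interleaved refinements, and admits no uniform bound in terms of $\enorm{U_t^-}^2$.

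The second gap is your termination argument for \boxed{\textsf{C}} and \boxed{\textsf{D}}. You rely on (i) monotone decrease of the indicators under refinement, and (ii) the statements $\E{c}\to0$ and $\est_*\le0$ being ``attained after finitely many bisections''. Claim (i) is false in general: $\E{\grid}$ changes with the new discrete solution on each refined grid, and the energy-norm error $\enorm{\ProG U_t^--U_t^-}$ of the $L^2$-projection need not decrease when the space is enlarged, so your quantity $\E{\grid}^2+\E{c}^2+\max\{\est_*,0\}$ is not monotonically driven below $\tau\tolst$. Claim (ii) does not follow from convergence alone: it needs the refinements to actually target the elements on which $\grid$ is coarser than $\gridold$. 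This targeting is exactly what part (5) of Assumption~\ref{ass:modules} provides, and it is the paper's key --- purely combinatorial --- step: each execution of \boxed{\textsf{C}} or \boxed{\textsf{D}} bisects at least one coarsened element with respect to the \emph{fixed} grid $\gridold$, refinement never creates new coarsened elements, and hence after finitely many such executions, irrespective of what the other branches do in between, one has $\grid\ge\gridold$; then $U_t^-\in\V(\grid)$, so $\ProG U_t^-=U_t^-$ exactly, $\E{c}=0$ and $\est_*\le0$. No limiting argument is needed, and this counting step is what is missing from your proof.
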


\begin{proof}
  In each iteration of the loop in \texttt{ST\_ADAPTATION} at first, a discrete
  solution $U_I$ is computed on
  the current grid $\grid$ with the actual time-step size
  $\tau$.
  Then either the time-step-size is
  reduced or the actual grid is refined. More precisely, exactly one of
  the statements labeled as
  \boxed{\textsf{A}},\dots,\boxed{\textsf{D}} in
  Algorithm~\ref{alg:TS_ADAPTATION} is executed, any of them terminating
  by Assumption~\ref{ass:modules}.  Whenever one of these statements is
  executed the corresponding indicator is positive.

  In statement \boxed{\textsf{C}} 
  the grid is refined due to the coarsening indicator $\E{c}$.
  Thanks to Assumption~\ref{ass:modules}~(5), after a finite number of
  executions of \boxed{\textsf{C}}, a grid $\grid$ is obtained with
  $\gridold\le\grid$ and thus $\Ecoarse[U_t^-,\grid]=0$,
  i.e. statement \boxed{\textsf{C}} is not entered anymore. 
  This happens irrespective of 
  refinements in other statements. 

  In statement \boxed{\textsf{D}} 
  the grid is refined with respect to the indicators $\est_*$
  controlling the energy gain due to coarsening. Therefore, it follows
  from the same reasoning as for statement \boxed{\textsf{C}}, that
  statement \boxed{\textsf{D}} is also executed at most until the coarsening is fully
  removed after finite many refinement steps.

It is important to notice that if statement \boxed{\textsf{A}} is
executed then the conditions in
lines~\ref{line:TSA-Estar-condition} and~\ref{line:TSA-Etime-condition}
imply
   \begin{align*}
    \frac{1}{\tau} \leq  \frac{1}{\tau}
     6\,\tau\enorm{U_t^+-\ProG U_t^-}^2 \frac{1}{\tolst^2} \leq 
        \frac{1}{\tolst^2}  6\, \left(
     \norm[\Omega\times(t,t+\tau)]{f}^2 +
    \enorm{ U^-_t}^2 \right),
  \end{align*}
where the last inequality follows from
Corollary~\ref{cor:uniform_bound}. 
This implies that $\tau$ is bounded from below and thus Statement \boxed{\textsf{A}} is only executed
  finite many times. This also proves the asserted lower bound on
  the final time-step size.

  Assuming that \texttt{ST\_ADAPTATION} does not terminate, we 
  infer from the fact that all other statements are only conducted finitely many times,
  that statement \boxed{\textsf{B}} has to be executed infinite many
  times. In other words, the loop reduces to the adaptive iteration \texttt{AFEM}
  with fixed data $U_t^-$, $\bar f$, $t$, and $\tau$. Therefore, 
  Proposition~\ref{P:adapt-ellipt} contradicts the condition
     in
  line~\ref{line:TSA-Espace-condition}.

  In summary, we deduce that \texttt{ST\_ADAPTATION} terminates and the
  iteration is abandoned in line \ref{line:TSA-break1}. This proves
  the assertion.
\end{proof}

We next address the termination of the main module \TAFEM.

\begin{prop}[Termination of \TAFEM]\label{P:termination_tafem}
  The adaptive algorithm \TAFEM terminates for any initial 
  time-step-size $\tau_0>0$ and produces a finite number of time instances
  $0=t_0<\dots<t_N=T$.
  
  Moreover, we have 
  $\Einit[{u_0,\gridn[0]}]\le\TOLinit^2$ and that the consistency error complies with
  \eqref{Cons:ineq}. For all $n=1,\ldots,N$, we have that the
  estimates in Lemma~\ref{lem:termination-tsa} are satisfied with
  $t=t_{n-1}$, $\tau=\tau_n$, $U_I=U_{|I_n}$, $U_t^\pm=U_{n-1}^\pm$,
  $\grid=\grid_n$, and $\gridold=\grid_{n-1}$.
\end{prop}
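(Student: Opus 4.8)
The plan is to establish termination in two stages — first that the initialization and every individual time-step terminate, then that the time-loop itself executes only finitely often — and finally to read the three asserted properties off the constituent modules.

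For the first stage, I would note that the initialization block (lines~\ref{lines:astfem-start-init}--\ref{lines:astfem-end-init}) terminates because \TOLFIND\ terminates and returns $\tolcons>0$ by Lemma~\ref{lem:termination-tolfind}, and $\ADAPTINIT{}$ terminates by Assumption~\ref{ass:modules}~(1), the latter also yielding $\Einit[{u_0,\gridn[0]}]\le\TOLinit^2$. Inside the repeat-loop, $\CONSISTENCY[]$ terminates by Lemma~\ref{lem:termination-consistency}, $\COARSEN[]$ by Assumption~\ref{ass:modules}~(3), and $\STADAPTATION[]$ by Lemma~\ref{lem:termination-tsa}; its precondition $U_{n-1}^-\in\V(\grid_{n-1})$ holds since $U_{n-1}^-=U_{|I_{n-1}}(t_{n-1})\in\V(\grid_{n-1})$, and the input grid $\grid_n$ (after \COARSEN[]) together with $\gridold=\grid_{n-1}$ meets the remaining hypotheses.

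The crux is showing $N<\infty$. Suppose not; then the monotone bounded sequence $\{t_n\}$ converges to some $t_\star\le T$ and the final step-sizes satisfy $\tau_n\to0$. Every completed step yields $\Estar[{U_{n-1}^+,U_{n-1}^-,\tau_n,\grid_n}]\le0$ by Lemma~\ref{lem:termination-tsa}, so condition~\eqref{cond:energ-cont} holds for each $n$ and Corollary~\ref{cor:uniform_bound} gives the uniform bound $\enorm{U_{n-1}^-}^2\le\norm[\Omega\times(0,T)]{f}^2+\enorm{U_0^-}^2$. Combined with $\norm[\Omega\times(t_{n-1},t_n)]{f}^2\le\norm[\Omega\times(0,T)]{f}^2$ and the lower bound of Lemma~\ref{lem:termination-tsa} (with the tolerance $\TOLst^2/C_T$ supplied in line~\ref{line:astfem-IIa}), this produces a positive constant $L$, independent of $n$, such that $\tau_n\ge\min\{\tauin_n,L\}$, where $\tauin_n$ is the step-size returned by $\CONSISTENCY[]$ before $\STADAPTATION[]$. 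Since $\tau_n\to0<L$, it follows that eventually $\tauin_n\le L$ and hence $\tau_n=\tauin_n\to0$; thus $\STADAPTATION[]$ stops executing its time-reduction branch, and the evolution of $\{t_n\}$ reduces to pure iteration of $\CONSISTENCY[]$, exactly as in \TOLFIND. At this point I would reuse verbatim Step~1 of the proof of Lemma~\ref{lem:termination-tolfind}: fixing $\ell_0,m_0$ with $\kappa_2^{\ell_0}\ge\kappa_1^{-m_0}\ge\kappa_2$ and using $\Econs[f,t_n,\kappa_2^{\ell_0-1}\tauin_n]\le\norm[\Omega\times(t_n,t_n+\kappa_2^{\ell_0-1}\tauin_n)]{f}^2\to0$, the enlarging loop of $\CONSISTENCY[]$ forces $\tauin_{n+1}\ge\tauin_n$ for all large $n$ (and in the case $t_\star=T$ forces $\tauin_n=T-t_n$), contradicting $\tauin_n\to0$. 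Hence $N<\infty$; since by construction $\tau_n\le T-t_{n-1}$, so $t_n\le T$, while the loop exits only once $t_n\ge T$, we obtain $t_N=T$.

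For the remaining assertions, $\Einit[{u_0,\gridn[0]}]\le\TOLinit^2$ was already recorded. For the consistency error, $\CONSISTENCY[]$ guarantees $\Econs[f,t_{n-1},\tauin_n]\le\tolcons^2$; since $\Econs$ is monotone non-decreasing in $\tau$ and $\STADAPTATION[]$ only reduces $\tau$, the final step-size still satisfies $\Econs[f,t_{n-1},\tau_n]\le\tolcons^2$, so Lemma~\ref{lem:termination-tolfind} yields $\sum_{n=1}^N\Econs[f,t_{n-1},\tau_n]\le\TOLcons^2$, which is~\eqref{Cons:ineq}. Finally, the three displayed inequalities are exactly the output guarantees of Lemma~\ref{lem:termination-tsa} for the call in line~\ref{line:astfem-IIa} under the identifications $t=t_{n-1}$, $\tau=\tau_n$, $U_I=U_{|I_n}$, $U_t^\pm=U_{n-1}^\pm$, $\grid=\grid_n$, $\gridold=\grid_{n-1}$. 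I expect the main obstacle to be precisely the uniform-in-$n$ lower bound on the step-sizes: $\CONSISTENCY[]$ may shrink $\tauin_n$ without any energy-based floor, whereas $\STADAPTATION[]$ supplies a floor only through Corollary~\ref{cor:uniform_bound}, and the two mechanisms must be disentangled — the resolution being that whenever the energy floor $L$ fails to bound $\tau_n$ from below, the step is governed by $\CONSISTENCY[]$ alone, where the \TOLFIND\ argument forecloses $\tau_n\to0$.
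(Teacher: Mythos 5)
Your proposal is correct and follows essentially the same route as the paper's own proof: termination of all modules via Assumption~\ref{ass:modules}, Lemma~\ref{lem:termination-consistency} and Lemma~\ref{lem:termination-tsa}; then, assuming $T$ is never reached, the combination of $\Estar[{U_{n-1}^+,U_{n-1}^-,\tau_n,\grid_n}]\le 0$ with Corollary~\ref{cor:uniform_bound} and the step-size floor from Lemma~\ref{lem:termination-tsa} to force $\tau_n=\tauin_n$ eventually, followed by the contradiction from Step~1 of the proof of Lemma~\ref{lem:termination-tolfind}. The only addition beyond the paper's argument is your explicit remark that $\Econs$ is monotone in $\tau$, so the consistency bound survives step-size reductions inside \STADAPTATION[] --- a detail the paper leaves implicit but which does not change the structure of the proof.
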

\begin{proof}
  Each loop starts with setting the time-step-size such that
  $\tau_n\leq T-t_n$, $n\in\N$. 
  Thanks to Assumption~\ref{ass:modules} for the black-box modules,
  Lemma~\ref{lem:termination-consistency} for \CONSISTENCY[], and
  Lemma~\ref{lem:termination-tsa} for \texttt{ST\_ADAPTATION},
  all modules of \TAFEM terminate and in each timestep the asserted
  properties are satisfied.

  Since we have $\Estar[{U_{n-1}^+, U_{n-1}^-,\tau_n,\grid_n}]\le0$
  for all $n$, we may conclude 
  $\enorm{U_{n-1}^-}\le \norm[\Omega\times(0,T)]{f}^2+\enorm{U_0}$
  from Lemma~\ref{cor:uniform_bound} and
  thus it follows with Lemma~\ref{lem:termination-tsa}, that
  \begin{align*}
    \tauin_n\ge \tau_n \ge  
    \min\Big\{\tauin_n,\frac{\kappa\,\tolst^2}{12 \big(
    \norm[\Omega\times(0,T)]{f}^2 + 
    \enorm{ U_0}^2 \big)}\Big\},
  \end{align*}
  where $\tauin_{n}=\CONSISTENCY[({f,t_{n-1},\tau_{n-1}},
  \tolcons)]$. Assuming that the final time is not reached
  implies $\tau_n\to 0$ as $n\to\infty$ and therefore there exists
  $n_0\in\N$, such that $\tau_n=\tauin_{n}$ for all $n\ge n_0$. 
  Now, the contradiction follows as in step~\step{1} of the proof of
  Lemma~\ref{lem:termination-tolfind}. 
  \end{proof}

Collecting the results derived above allows us to prove the main result.

\begin{thm}[Convergence into Tolerance]\label{Thm:main}
  Algorithm \TAFEM computes for any prescribed tolerance $\TOL>0$ and
  initial time-step-size $\tau_0>0$ a partition $0<t_0<\dots<t_N=T$
  with associated meshes $\{\gridn\}_{n=0,\dots,N}$, such that we have
  for the 
  corresponding approximation $\hU\in\W$ from~\eqref{eq:discreteb},
  that
  \begin{align*}
    \Wnorm{u-\hU} \le \TOL.
  \end{align*}
\end{thm}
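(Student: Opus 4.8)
The plan is to feed the a posteriori upper bound of Proposition~\ref{p:upper} with the structural information gathered in Proposition~\ref{P:termination_tafem}, and then to absorb the three resulting indicator sums into the three contributions of the tolerance splitting $\TOLinit^2+3\,\TOLcons^2+\TOLst^2=\TOL^2$ fixed in \TAFEM. First I would invoke Proposition~\ref{P:termination_tafem} to guarantee that \TAFEM{} terminates, producing a finite partition $0=t_0<\dots<t_N=T$ with meshes $\{\gridn\}$ together with, for every $n$, the properties of Lemma~\ref{lem:termination-tsa}: $\Estar[{U_{n-1}^+,U_{n-1}^-,\tau_n,\grid_n}]\le0$, the threshold $\Etime[{U_{n-1}^+,U_{n-1}^-,\tau_n,\grid_n}]\le\tolst^2$, and the two relative bounds $\Espace[{U,U_{n-1}^-,\tn,\tau_n,f_n,\grid_n}],\Ecoarse[{U_{n-1}^-,\tau_n,\grid_n}]\le\Etime[{U_{n-1}^+,U_{n-1}^-,\tau_n,\grid_n}]+\Econs[{f,t_{n-1},\tau_n}]+\tau_n\tolst$, as well as $\Einit[{u_0,\grid_0}]\le\TOLinit^2$ and the local consistency control $\Econs[{f,t_{n-1},\tau_n}]\le\tolcons^2$. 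Here $\tolst=\TOLst^2/C_T$ is the local tolerance passed to \texttt{ST\_ADAPTATION}.

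Next I would collapse the integrand of Proposition~\ref{p:upper}. Abbreviating $E_n^{\tau}:=\Etime[{U_{n-1}^+,U_{n-1}^-,\tau_n,\grid_n}]$ and $E_n^{f}:=\Econs[{f,t_{n-1},\tau_n}]$, the triangle-inequality splitting~\eqref{eq:indicators_2}, $\Etc[{U_{n-1}^+,U_{n-1}^-,\tau_n}]\le\Ecoarse[{U_{n-1}^-,\tau_n,\grid_n}]+E_n^{\tau}$, combined with the two relative bounds of Lemma~\ref{lem:termination-tsa}, gives on each interval
\[
  \Etc[{U_{n-1}^+,U_{n-1}^-,\tau_n}]+\Espace[{U,U_{n-1}^-,\tn,\tau_n,f_n,\grid_n}]+E_n^{f}\le 3\,E_n^{\tau}+3\,E_n^{f}+2\,\tau_n\tolst .
\]
Summing over $n$ and using $\sum_n\tau_n=T$, the consistency sum is controlled by Lemma~\ref{lem:termination-tolfind}: the local bounds $E_n^{f}\le\tolcons^2$ yield $\sum_n E_n^{f}\le\TOLcons^2$, so the term $3\sum_n E_n^{f}$ is already matched to its slot $3\,\TOLcons^2$. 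It then remains to prove the estimate $3\sum_n E_n^{\tau}+2\,\tolst\,T\le\TOLst^2$.

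The crux is the $L^2$ equal-distribution bound for $\sum_n E_n^{\tau}$, which is exactly the new idea motivated in Section~\ref{sec:introduction} and which I expect to be the main obstacle. Writing $a_n:=\enorm{U_{n-1}^+-\ProG[n]U_{n-1}^-}^2$, so that $E_n^{\tau}=6\,C_{c\tau}\,\tau_n a_n$, I would combine two complementary facts. Since $\Estar\le0$ for all $n$, condition~\eqref{cond:energ-cont} holds and Corollary~\ref{cor:uniform_bound} supplies the \emph{unweighted} control $\sum_n a_n\le B$ with $B:=\norm[\Omega\times(0,T)]{f}^2+\enorm{U_0}^2$; meanwhile the local threshold $E_n^{\tau}\le\tolst^2$ gives the \emph{weighted} control $\tau_n a_n\le\tolst^2/(6\,C_{c\tau})$. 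Splitting the index set according to $\tau_n\le\delta$ and $\tau_n>\delta$ for a free $\delta>0$, the first part is bounded by $\sum_{\tau_n\le\delta}\tau_n a_n\le\delta\sum_n a_n\le\delta B$, while at most $T/\delta$ intervals satisfy $\tau_n>\delta$, so $\sum_{\tau_n>\delta}\tau_n a_n\le(T/\delta)\,\tolst^2/(6\,C_{c\tau})$. Hence $\sum_n E_n^{\tau}\le 6\,C_{c\tau}\,\delta B+(T/\delta)\,\tolst^2$, and optimising over $\delta$ (the balance $\delta=\tolst\sqrt{T/(6\,C_{c\tau}B)}$) yields $\sum_n E_n^{\tau}\le 2\,\tolst\sqrt{6\,C_{c\tau}\,B\,T}$. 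The difficulty is precisely this balancing of the energy bound against the local thresholding, and checking that it reproduces the constant hardwired into $C_T$.

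Finally I would assemble the pieces. By the definition of $C_T$ in \TAFEM{} one has $6\sqrt{6\,C_{c\tau}\,T}\,B^{1/2}=C_T-2T$, so the previous estimate reads $3\sum_n E_n^{\tau}\le\tolst\,(C_T-2T)$, whence
\[
  3\sum_n E_n^{\tau}+2\,\tolst\,T\le\tolst\,(C_T-2T)+2\,\tolst\,T=\tolst\,C_T=\frac{\TOLst^2}{C_T}\,C_T=\TOLst^2 .
\]
Inserting this together with $\Einit[{u_0,\grid_0}]\le\TOLinit^2$ and $3\sum_n E_n^{f}\le3\,\TOLcons^2$ into Proposition~\ref{p:upper} gives
\[
  \Wnorm{u-\hU}^2\le\TOLinit^2+3\,\TOLcons^2+\TOLst^2=\TOL^2 ,
\]
the last equality being the tolerance splitting of \TAFEM. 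Taking square roots yields $\Wnorm{u-\hU}\le\TOL$, as claimed.
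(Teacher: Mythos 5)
Your proposal is correct and follows essentially the same route as the paper's own proof: termination via Proposition~\ref{P:termination_tafem}, the splitting~\eqref{eq:indicators_2} combined with the relative bounds of Lemma~\ref{lem:termination-tsa}, the $\delta$-splitting of $\sum_n\Etime[{U_{n-1}^+,U_{n-1}^-,\tau_n,\grid_n}]$ balanced against the energy bound of Corollary~\ref{cor:uniform_bound} (justified, as you note, by $\Est{*}{2}{\cdot}\le0$), and the matching with the hardwired constant $C_T$ before inserting everything into Proposition~\ref{p:upper}. The only difference is cosmetic bookkeeping: you absorb the consistency indicator into the per-interval estimate from the start, whereas the paper carries $\sum_n\Econs[f,t_{n-1},\tau_n]\le\TOLcons^2$ separately and adds it at the end; both yield the same accounting $\TOLinit^2+3\,\TOLcons^2+\TOLst^2=\TOL^2$.
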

\begin{proof}
  Thanks to
  Proposition~\ref{P:termination_tafem}, we have that \TAFEM
  terminates and it remains to prove the
  error bound. For the sake of brevity of the presentation, we shall
  use the abbreviations
  \begin{alignat*}{2}
    \Etime[n]&:= \Etime[{\Un[n-1]^+,\Un[n-1]^-,
      \tau_n,\grid_n}],&\qquad \Econs[n]&:= \Econs[f,t_{n-1},\tau_n],
    \\
    \Espace[n]&:=\Espace[{U,\Un[n-1]^-,t_{n-1},\tau_n, f_n,\gridn}],
    &\quad\text{and}\quad
    \Ecoarse[n]&:= \Ecoarse[{\Un[n-1]^-,\tau, \gridn}].
  \end{alignat*}
 
  The initial error satisfies $\Einit[{u_0,\gridn[0]}]\le\TOLinit^2$ by
  by Assumption~\ref{ass:modules}. Thanks to the choice of the precomputed local tolerance
  $\tolcons$,  we know from Lemma~\ref{lem:termination-tolfind} 
  that the consistency error is bounded by
  $\TOLcons$, i.e. we have \eqref{Cons:ineq}.  

  When finalizing a time-step, we also have from Lemma~\ref{P:termination_tafem} that 
  \begin{align*}
    \Etime[n] &\le \tolst^2
     \qquad
                \text{and}
                 \qquad
      \Espace[n], \Ecoarse[n]\leq
        \Etime[n]+\Econs[n] +\tau_n \tolst,
    \end{align*}
  with $\tolst=\TOLst^2/C_T$.
  Combining this with \eqref{eq:indicators_2} and \eqref{Cons:ineq},
  we conclude 
  \begin{align*}
    \sum_{n=1}^{N}\Espace[n]
    +\,\Etc[{\Un[n-1]^+},{\Un[n-1]^-,\tau_n}]
      &\le \sum_{n=1}^{N}\Espace[n]
    +\Ecoarse[n]
    +\Etime[n]
      \\
      &\le \sum_{n=1}^{N} 
      2\,\tau_n \tolst+2\,\Econs[n]+ 3\, 
      \Etime[n] 
      \\
      &\le2\,T \, \tolst + 2\,\TOLcons^2+3
      \sum_{n=1}^{N}
        \Etime[n].
  \end{align*}
  Using Corollary~\ref{cor:uniform_bound} for the last term, we get
  for any $\delta>0$, that 
    \begin{align*}
     \sum_{n=1}^{N}  
    \Etime[n]
      &= \sum_{\tau_n>\delta}
      \Etime[n] +
      \sum_{\tau_n\leq\delta} \Etime[n]
      \\
      & \leq \frac{T}{\delta} \tolst^2 + \delta \sum_{n=1}^N
      6\, C_{\tau}\,\Enorm{\Un[n-1]^+ -\Pi_{\grid_n}\Un[n-1]^-}^2 \\
      &\leq \frac{T}{\delta} \tolst^2 + \delta 6 \,C_{\tau} \left(
        \norm[\Omega\times(0,T)]{f}^2 + \enorm{ U_0}^2 \right)
    \end{align*}
  and by choosing 
  \begin{equation*}
    \delta = \left( \frac{T}{ 6 \,C_{c\tau}  \left( \norm[\Omega\times(0,T)]{f}^2 +
    \enorm{ U_0}^2 \right)}\right)^{\frac{1}{2}}  \tolst,
  \end{equation*}
  we obtain 
  \begin{align*}
    \sum_{n=1}^{N}  
    \Etime[n]
    &\leq 2\left( 6\,C_{c\tau} \,T  \left( \norm[\Omega\times(0,T)]{f}^2 +
    \enorm{ U_0}^2 \right) \right) ^\frac12 \tolst .
  \end{align*}
  Inserting this into the above estimate yields 
   \begin{multline*}
    \sum_{n=1}^{N}\Espace[n]
    +\,\Etc[{\Un[n-1]^+},{\Un[n-1]^-,\tau_n}]
      \\
      \begin{split}
        &\le \underbrace{\left( 
          6\,\sqrt{6\,C_{c\tau}\,T}
           \left( \norm[\Omega\times(0,T)]{f}^2 + \enorm{
              U_0}^2 \right)^\frac12+ 2\,T \right)}_{=C_T}\, \tolst+2\,\TOLcons^2
        \\
        &\le\TOLst^2+2\,\TOLcons^2.
      \end{split}
  \end{multline*}

  Collecting the bounds for the indicators $\E{0}$, $\E{\grid}$,
  $\E{c\tau}$, and $\E{f}$, recalling the splitting 
  \begin{displaymath}
    \TOLinit^2 + 
    3\,\TOLcons^2+ \TOLst^2  = \TOL^2,
  \end{displaymath}
  and taking into account the upper bound of
  Proposition~\ref{p:upper} proves the assertion.
\end{proof}

\begin{rem}\label{rem:custimisation2}
  In order to guarantee the main result
  (Theorem~\ref{Thm:main}) also for the modifications of
  Remark~\ref{rem:custimisation} line~\ref{TAFEM:C_T} in  \TAFEM must
  be changed to  
   \begin{algorithmic}[1]
    \setcounter{ALC@line}{4}
    \STATE compute $C_T:= 
    (1+\gamma_c+\gamma_\grid)\,2\,\sqrt{6 \,C_{c\tau} \, T}  \left( \norm[\Omega\times(0,T)]{f}^2 +
    \enorm{ U_0^-}^2 \right)^\frac12   + (\sigma_c+\sigma_\grid)\,T $.  
  \end{algorithmic}
  Moreover, the splitting
  of the tolerances in line~\ref{TAFEM:splitTOL} must be changed to 
  \begin{algorithmic}[1]
    \setcounter{ALC@line}{1}
     \STATE split tolerance $\TOL>0$ such that
    $\TOLinit^2+(1+\rho_\grid+\rho_c)\TOLcons^2+\TOLst^2=\TOL^2$.
   \end{algorithmic}
\end{rem}

\section{Numerical aspects and experiments\label{sec:numerics}}
We conclude the article by illustrating some practical aspects of the
implementation with three numerical experiments. We compare the
presented algorithm \TAFEM with the algorithm \ASTFEM introduced in
\cite{KrMoScSi:12}. 

\subsection{The implementation}
The experiments are implemented in DUNE~\cite{DUNE:16} using the 
DUNE-ACFEM (http://users.dune-project.org/projects/dune-acfem) module. The computations utilize
linear conforming finite elements on space and 
$\dGs[0]$ as time-stepping scheme.  All simulations where performed on a Intel\textregistered Core\texttrademark i7-6700HQ Processor with 64 GB RAM.

Both algorithms \TAFEM and \ASTFEM start from exactly the same initial
mesh $\gridinit$. 
The initial values are interpolated on the mesh and local refinements
are performed in order to comply with the initial tolerance. 
On the resulting meshes the needed constants are computed  (the
minimal time-step size $\tau^*$  for \ASTFEM and  $\tolcons$ from
{\TOLFIND} for \TAFEM ). 

In order to control $\est_c$ and $\est_*$, the algorithms need to
handle two meshes and corresponding finite 
element spaces at every new time-step. This is realised
exploiting the tree structure of the
refinements of macro elements as
in~\cite{KrMoScSi:12}.  
At every new time-step all elements on the mesh are marked to be
coarsened up to two times and then adapted again if necessary. 
The mentioned estimators are computed only up to constants and used
for the adaptive refinement progress.
The spatial marking relies on the equi-distribution strategy, which
marks every element with an estimator bigger than the
arithmetic mean. 

The following remark lists the tolerance splitting used by \ASTFEM.
\begin{rem}\label{R:tol-split}
  In \cite{KrMoScSi:12}, the \ASTFEM uses the tolerance splitting 
  \begin{align*}
    \TOL^2={\TOL}_0^2+T\widetilde{\TOL}_f^2+T\widetilde{\TOL}_{\grid\tau}^2+\widetilde{\TOL}_*^2.
  \end{align*}
  Thereby $\TOL_*^2$ is used to compute a minimal safeguard step-size
  $\tau_*$. 
  The method computes then an approximation $\mathcal{U}\in\W$ to~\eqref{eq:weak}, such
  that 
  \begin{align*}
    \Einit[u_0,\grid_0]\le \TOL_0^2,\qquad  \sum_{n=1}^N\Big\{\Econs[f,t_{n-1},\tau_n]\Big\}&\le T\widetilde{\TOL}_f^2
                         \intertext{and}
    \sum_{n=1}^N\Big\{
    \Etc[U_{n-1}^+,U_{n-1}^-,\tau_n]
    +
    \Espace[U,U_{n-1}^-,\tn,\tau_n,f_n,\grid_n]\Big\}
    &\le T\widetilde{\TOL}_{\grid\tau}^2+\widetilde{\TOL}_*^2.
  \end{align*}
  This motivates the relation 
  \begin{align*}
    T\widetilde{\TOL}_f^2=3{\TOL}_f^2,\quad \text{and}\quad
    T\widetilde{\TOL}_{\grid\tau}^2+\widetilde{\TOL}_*^2 =\TOL_{\grid\tau}^2
  \end{align*}
  in the examples below.
  
  For the simulations presented below we have used the following
  comparable splittings for the two methods \ASTFEM and \TAFEM
  relative to the total tolerance $\TOL$: 
  \begin{itemize}
  \item $\TOL_0^2=\frac1{10}\TOL^2$,
  \item  $\TOL_f^2=T\widetilde{\TOL}^2_f =\frac4{10} \TOL^2$,
  \item  $\TOL_{\grid\tau}^2=T\widetilde{\TOL}_{\grid\tau}^2+\widetilde{\TOL}_*^2 =\frac6{10} \TOL^2$,
   \item  $\widetilde{\TOL}_*^2 =\frac1{100} \TOL^2$.
  \end{itemize}
\end{rem}

\subsection{The experiments}
In this section, we introduce the three numerical experiments in
detail and discuss the numerical results. 
\subsubsection{Singularity in time}\label{sec:singtime}
This numerical experiment is constructed on the spatial domain
$\Omega=(0,1)^2\subset\R^2$ over the time interval $(0,T)=(0,2)$ with
homogeneous Dirichlet boundary conditions and homogeneous initial
data. The right-hand side $f$ is choosen such that the exact solution
is given by 
\begin{equation*}
u(\boldsymbol{x},t) = |t-\bar{t}|^\alpha\sin(\pi(x^2-x)t)\sin(\pi(y^2-y)t)
\end{equation*}
with parameters $\bar{t}=\frac{\pi}{3}$ and $\alpha=0.7$. The graph of
$u$ has a singularity in time at $t=\frac{\pi}{3}$. Hence, the
right-hand side contains the therm
$\operatorname{sgn}(t-\bar{t})\alpha|t-\bar{t}|^{\alpha-1}$. A direct
calculation shows that this term is $L^2$ -integrable but is not in
$H^1$. 
This particular example shows one main advantage of \TAFEM.
In fact, in contrast to \ASTFEM, \TAFEM does not require
the right hand side $f$ to have temporal derivative in $L^2$ in order
to control the consistency error $\E{f}$.

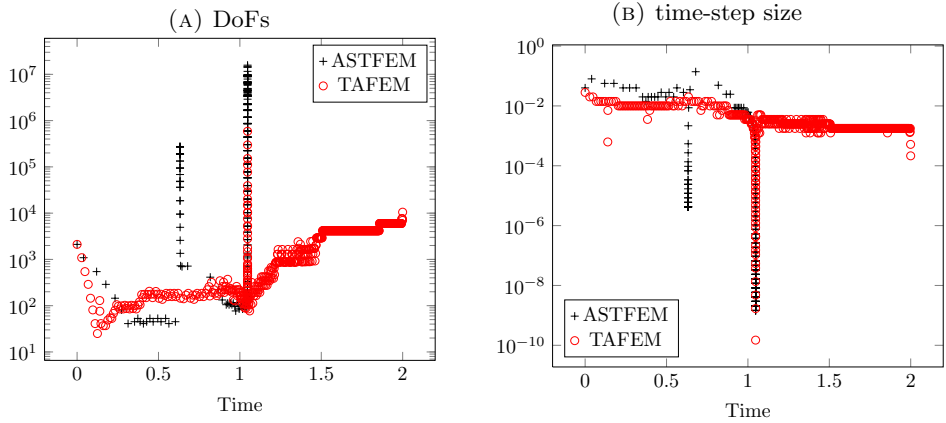
\begin{figure}[h]
\begin{subfigure}[c]{0.45\textwidth}
\subcaption{DoFs}
\begin{tikzpicture}[scale=0.75]
\begin{semilogyaxis}[xlabel={Time}]
\addplot[black, only marks, mark=+] table[ x=Time, y=Dofs ] {ts-logfile-astfem.dat};
\addlegendentry{\ASTFEM}
\addplot[red, only marks, mark=o] table[x=Time, y=Dofs ] {ts-logfile-tafem.dat};
\addlegendentry{$\TAFEM$}
\end{semilogyaxis}
\end{tikzpicture}
\end{subfigure}\qquad
\begin{subfigure}[c]{0.45\textwidth}
\subcaption{time-step size}
\begin{tikzpicture}[scale=0.75]
\begin{semilogyaxis}[xlabel={Time},legend style={legend pos=south west}]]
\addplot[black, only marks, mark=+] table[ x=Time, y=DeltaT ] {ts-logfile-astfem.dat};
\addlegendentry{\ASTFEM}
\addplot[red, only marks, mark=o] table[x=Time, y=DeltaT ] {ts-logfile-tafem.dat};
\addlegendentry{$\TAFEM$}
\end{semilogyaxis}
\end{tikzpicture}
\end{subfigure}
\caption{DoFs and time-step sizes for the singularity in time problem. }
\label{fig:ts-DoF+TSS}
\end{figure}

\begin{figure}{h}
  \centering
     \begin{tikzpicture}[scale=1]
\begin{semilogyaxis}[width = 10cm, height=8cm,xlabel={Time},legend style={legend pos=south west,font=\tiny}]
\addplot[red, solid, mark={}] table[ x=Time, y expr=\thisrow{SpaceEstimate2}+\thisrow{CT-Estimate2}+\thisrow{D-Estimate2} ] {ts-logfile-tafem.dat};
\addlegendentry{\TAFEM $\E{}^2$}
\addplot[black, dashed, thick, mark={}] table[x=Time, y expr=\thisrow{DeltaT}*(\thisrow{SpaceEstimate2}+\thisrow{CT-Estimate2}+\thisrow{D-Estimate2})] {ts-logfile-astfem.dat};
\addlegendentry{\ASTFEM $\E{}^2$}
\addplot[blue, dotted, thick, mark={}] table[x=Time, y expr=\thisrow{SpaceEstimate2}+\thisrow{CT-Estimate2}+\thisrow{D-Estimate2}] {ts-logfile-astfem.dat};
\addlegendentry{\ASTFEM $\frac1\tau \E{}^2$}
\end{semilogyaxis}
\end{tikzpicture}
  \caption{The local error estimators
    $\E{\tau}^2+\E{\grid}^2+\E{c}^2+\E{f}^2$ for \TAFEM and \ASTFEM as
    well as the sum of local $L^\infty$ indicators
    $\frac1\tau(\E{\tau}^2+\E{\grid}^2+\E{c}^2+\E{f}^2)$ used by \ASTFEM
    for the singularity in time problem.}
\label{fig:ts-Etaf+East}
\end{figure}
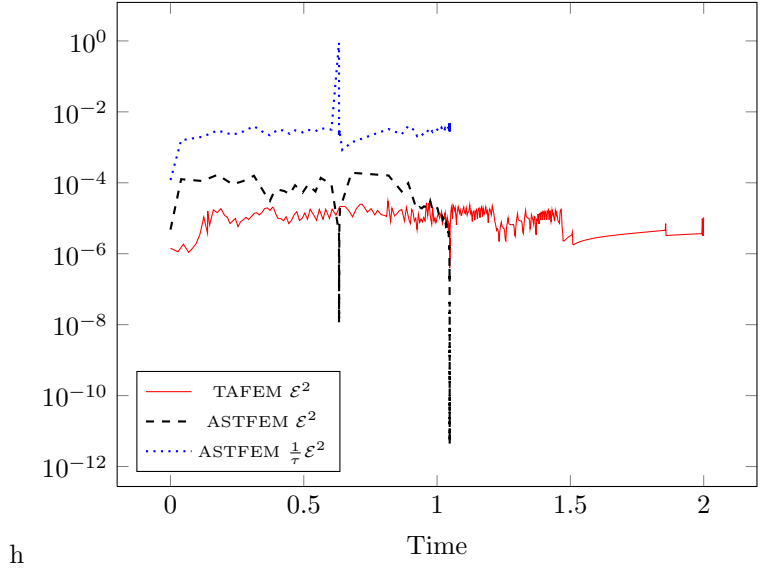

\footnotesize
\begin{figure}[h]
\begin{tabular}{ccccc}
  time  & time-step \ASTFEM &  DoFs \ASTFEM &   time-step \TAFEM & DoFs \TAFEM  \\ \hline
  1.0  & 0.00613614 & 97 & 0.00353598  & 193\\
  1.02  & 0.00433893  & 85 & 0.00353601 & 112\\
  1.03  & 0.0030681 & 97 & 0.00250034  &  109\\
  1.04  & 0.00153406 &  97 & 0.00125018   & 109\\
  1.042  & 0.00108475 & 97 & 0.00125018   & 125\\
  1.044  & 0.00108475  & 97 & 0.000884012  & 132\\
  1.045  & 0.000542376 & 157 & 0.000625091   & 128\\
  1.046 & 0.000383519 & 193 & 0.000312546  & 242\\
  1.047 & 3.3899e-05 & 713 & 7.81367e-05  & 448\\
  1.0471 & 1.19852e-05 & 2073 & 3.90684e-05  & 635\\
  1.0472 & 9.36409e-08  & 226082 & 1.7266e-06  & 3693\\
  1.0473 & $non$ & $non$ & 3.90691e-05  & 622\\
\end{tabular}
\caption{Time-steps and DoFs of \ASTFEM and \TAFEM for the singularity in time problem.}\label{T:TS-stepsize}
\end{figure}
\normalsize

\ASTFEM was killed after time-step 288 in which $14\, 163\, 460$ DoFs
are used as well as a time-step size of  $1.46338\mathrm{e}{-9}$.  As can be
observed from 
Figure~\ref{fig:ts-DoF+TSS}, \ASTFEM massively 
refines in time and space. It was killed before reaching the singularity at
$\bar{t}=\frac{\pi}{3}$, thereby accumulating
the total number of  $498\, 228\, 711$ DoFs. The reason for this
behaviour lies in the $L^\infty$ marking. Indeed,  
Figure~\ref{fig:ts-Etaf+East} shows that \ASTFEM equally distributes
the $L^\infty$ indicators thereby leading to very small local errors,
which cause the strong spatial refinement. Note
that the minimal step-size $\tau_*$ in \ASTFEM only applies when
temporal refinement is performed due to the time indicator
$\E{\tau}$,  i.e., time-step sizes below the threshold can
$\tau_*$ be chosen when required by the consistency estimator
$\est_f$, which is the case close to the singularity. Consequently,
the behaviour of \ASTFEM cannot essentially improved by a different
choice of $\TOL_*$.
In contrast, the local estimators \TAFEM appears to be quite equally distributed. 
It uses slightly larger time-steps and by far less DoFs close to the
singularity; compare with the table of Fig.~\ref{T:TS-stepsize}. It completely outperforms \ASTFEM and  
reaches the final time with a total of $2\, 947\, 080$ DoFs in 618 time-steps.

\subsubsection{Jumping singularity}
Inspired by example $5.3$ of~\cite{MoNoSi:00}, we construct an
experiment where the solution has a strong spatial singularity that
changes its position in time. In the domain $\Omega\times(0,4]$,
with $\Omega=(0,3)\times(0,3)$, we define the  elliptic operator $\L
u=-\divo{\Am\nabla u}$, where   
\begin{equation*}
  \Am(t,x) =
  \begin{cases}
    a_1\mathbb{I}\qquad & \text{if } (x-x_i)(y-y_i)\geq0\\
    a_2\mathbb{I}\qquad & \text{if } (x-x_i)(y-y_i)<0
  \end{cases}
\end{equation*}
with $a_1=161.4476387975881$, $a_2=1$, $i= \lceil t \rceil$,
$(x_1,y_1)=(1,2)$, $(x_2,y_2)=(1,1)$, $(x_3,y_3)=(2,1)$, and
$(x_4,y_4)=(2,2)$. This operator will `move' the singularity through
the points $x_i$.  
Let $u$ be the function
\begin{equation*}
  u(x,t) = \sum_{i=1}^4 s_i(t)\ r_i^\gamma\ \mu(\theta_i)
\end{equation*}
where
\begin{equation*}
  s_i(t) =
  \begin{cases}
    (t-(i-1))^2(t-i)^2\quad &\text{if } i-1\leq t \leq i\\
    0\qquad & \text{otherwise}
  \end{cases}
\end{equation*}
and
\begin{equation*}
  \mu(\theta) =
  \begin{cases}
    \cos((\frac{\pi}{2}-\sigma)\gamma)\cdot\cos((\theta-\frac{\pi}{2}+\rho)\gamma) \quad&  \text{if } 0\leq\theta<\frac{1}{2}\pi\\
    \cos(\rho\gamma)\cdot\cos((\theta-\pi+\sigma)\gamma) &  \text{if } \frac{1}{2}\pi\leq\theta<\pi\\
    \cos(\sigma\gamma)\cdot\cos((\theta-\pi-\rho)\gamma) &  \text{if } \pi\leq\theta<\frac{3}{2}\pi\\
    \cos((\frac{\pi}{2}-\rho)\gamma)\cdot\cos((\theta-\frac{3\pi}{2}-\sigma)\gamma) &  \text{if } \frac{3}{2}\pi\leq\theta<2\pi\\
  \end{cases}
\end{equation*}
with $\gamma=0.1$, $\rho=\frac{\pi}{4}$, $\sigma=-14.92256510455152$, $x-x_i=r_i\cos(\theta_i)$ and $y-y_i=r_i\sin(\theta_i)$. 
It is easy to check that $u$ satisfies
\begin{equation*}
  \partial_t u(x,t) +\L u(x,t) = \sum_{i=1}^4 r_i^\gamma \mu(\theta_i)\ \partial_t s_i(t)\ .
\end{equation*}

 Based on the ideas presented in  Remark~\ref{R:tol-split} we compare 
\TAFEM and \ASTFEM with the same  tolerance
$\TOL=0.007$.

\begin{figure}
\begin{subfigure}[c]{0.45\textwidth}
\subcaption{DoF}
\begin{tikzpicture}[scale=0.75]
\begin{semilogyaxis}[xlabel={Time}]
\addplot[black, dashed, thick, mark={}] table[ x=Time, y=Dofs ] {js-logfile-astfem.dat};
\addlegendentry{\ASTFEM}
\addplot[red, solid, mark={}] table[x=Time, y=Dofs ] {js-logfile-tafem.dat};
\addlegendentry{$\TAFEM$}
\end{semilogyaxis}
\end{tikzpicture}
\end{subfigure}\qquad
\begin{subfigure}[c]{0.45\textwidth}
\subcaption{time-step size}
\begin{tikzpicture}[scale=0.75]
\begin{semilogyaxis}[xlabel={Time}]
\addplot[black, dashed, thick, mark={}] table[ x=Time, y=DeltaT ] {js-logfile-astfem.dat};
\addlegendentry{\ASTFEM}
\addplot[red, solid, mark={}] table[x=Time, y=DeltaT ] {js-logfile-tafem.dat};
\addlegendentry{$\TAFEM$}
\end{semilogyaxis}
\end{tikzpicture}
\end{subfigure}
\caption{DoFs and time-step sizes for the jumping singularity
  problem.}
\label{fig:js-DoF+TSS}
\end{figure}
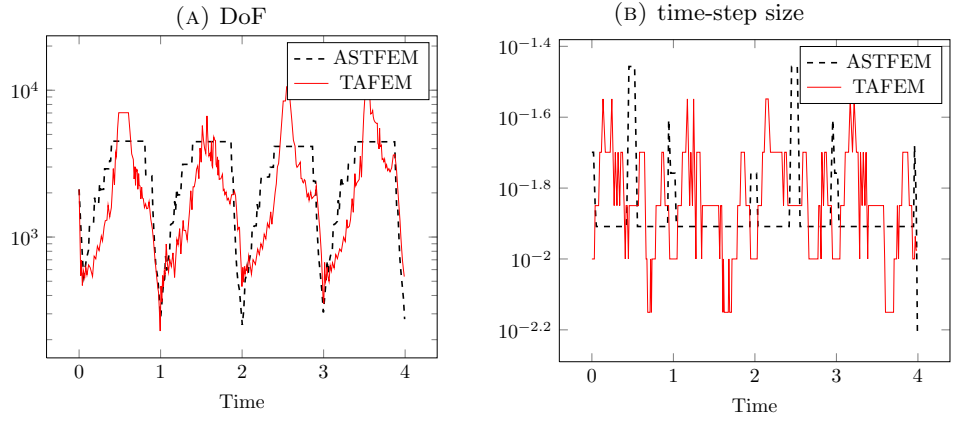

\begin{figure}
\begin{subfigure}[c]{0.22\textwidth}
\subcaption{$t\approx 0.5$}
\includegraphics[width=1.0\textwidth]{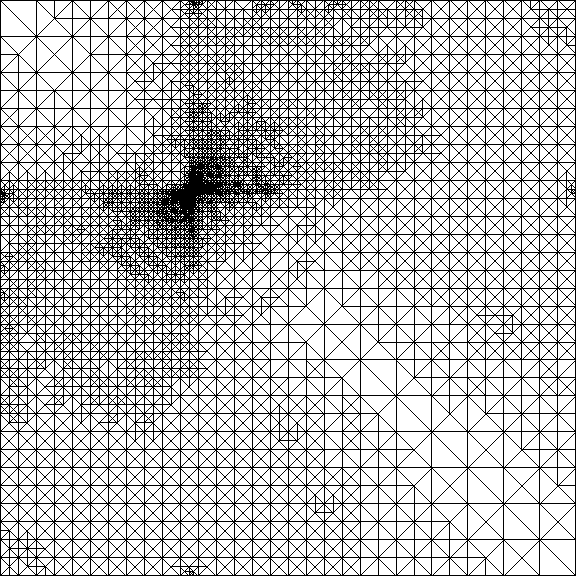}
\end{subfigure}
\quad
\begin{subfigure}[c]{0.22\textwidth}
\subcaption{$t\approx 1.0$}
\includegraphics[width=1.0\textwidth]{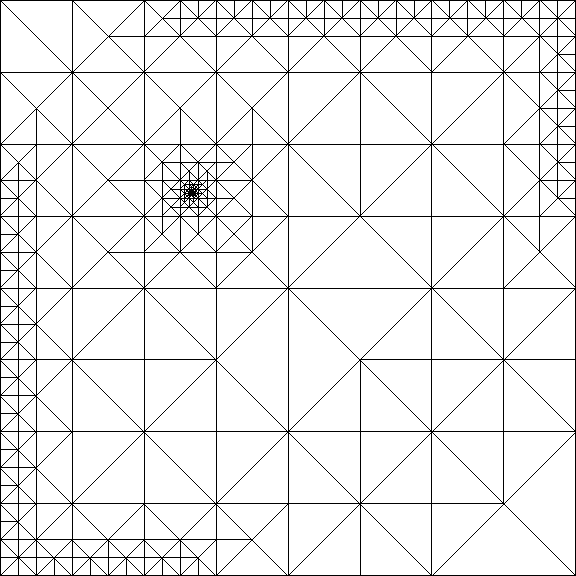}
\end{subfigure}
\quad
\begin{subfigure}[c]{0.22\textwidth}
\subcaption{$t\approx 1.5$}
\includegraphics[width=1.0\textwidth]{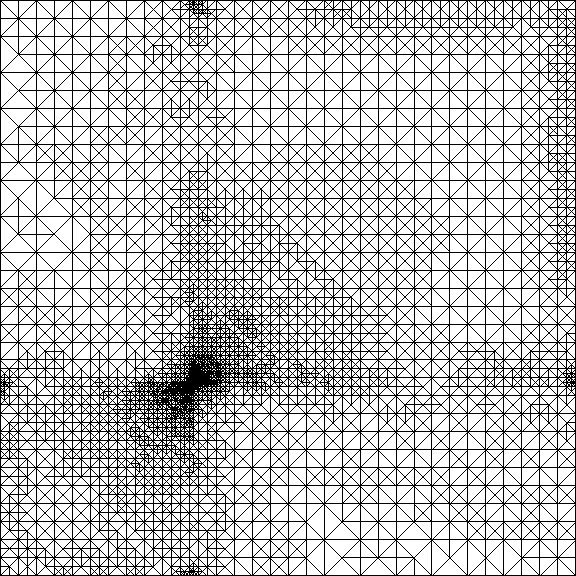}
\end{subfigure}
\quad
\begin{subfigure}[c]{0.22\textwidth}
\subcaption{$t\approx 2.0$}
\includegraphics[width=1.0\textwidth]{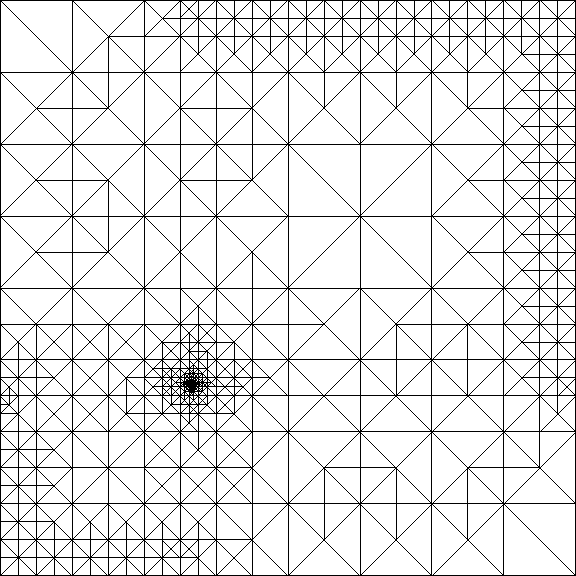}
\end{subfigure}

\begin{subfigure}[c]{0.22\textwidth}
\subcaption{$t\approx 2.5$}
\includegraphics[width=1.0\textwidth]{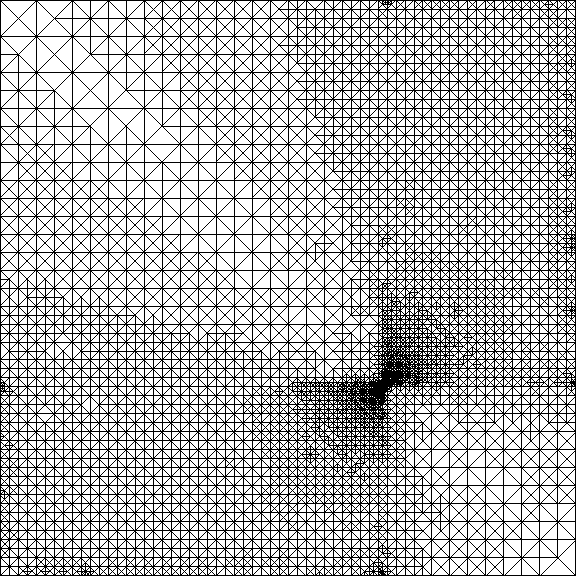}
\end{subfigure}
\quad
\begin{subfigure}[c]{0.22\textwidth}
\subcaption{$t\approx 3.0$}
\includegraphics[width=1.0\textwidth]{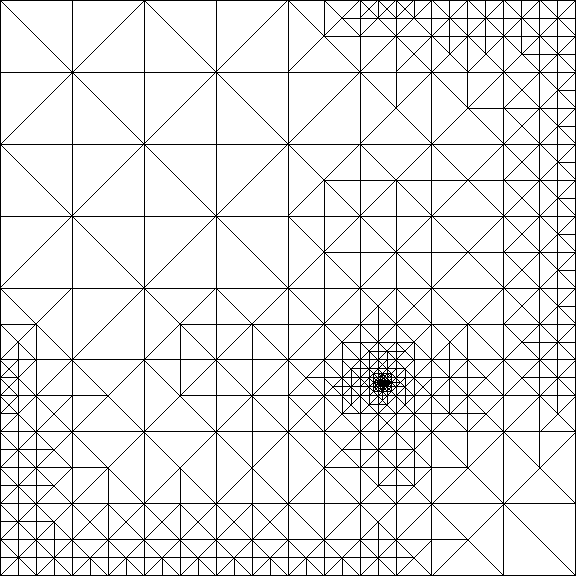}
\end{subfigure}
\quad
\begin{subfigure}[c]{0.22\textwidth}
\subcaption{$t\approx 3.5$}
\includegraphics[width=1.0\textwidth]{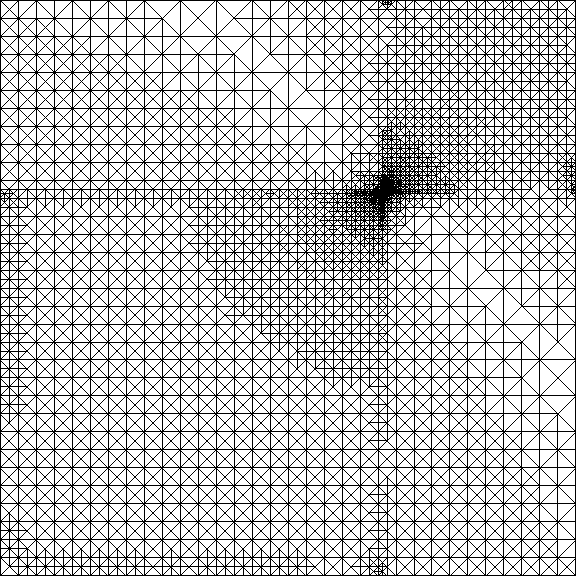}
\end{subfigure}
\quad
\begin{subfigure}[c]{0.22\textwidth}
\subcaption{$t\approx 4.0$}
\includegraphics[width=1.0\textwidth]{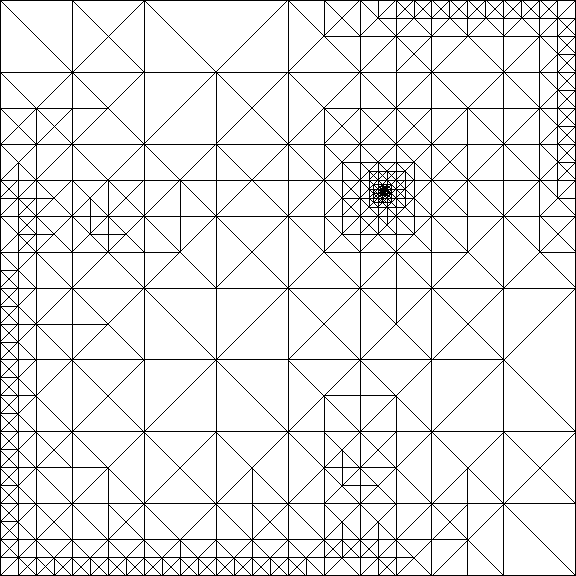}
\end{subfigure}
\caption{Adaptive grids for the jumping singularity problem.}
\label{fig:adaptMesh}
\end{figure}

\ASTFEM makes excessive use of the nonstandard exit, i.e., 
the time-step sizes equal minimal
time-step size $\tau_\ast = 0.0123477$ for 276 of a total of 302
time-steps,
and  uses a
total of 893771 Dofs. The $L^2-H^1$-error is $0.0546689$, the  $L^2-L^2$-error is  $0.0355061$ and the total computation time was 413.67 seconds.

The \TAFEM uses  a
total of 786789 Dofs in 291 time-steps. The
$L^2(0,4,H^1(\Omega))$-error is $0.0552438$, the
$L^2(0,4,L^2(\Omega))$-error is  $0.034989$  and the total computation
time was 546.113 seconds (including \TOLFIND).  
The adaptive meshes generated by \TAFEM are displayed in Figure~\ref{fig:adaptMesh}.
We see that the spatial adaptivity captures the position of the
singularity by local refinement and coarsens the region when the
singularity has passed by. 
By having a look on Fig.~\ref{fig:js-DoF+TSS} we see, that \TAFEM
makes more use of the spatial and temporal adaptivity  and 
achieves a similar result with slightly less effort.

The advantages of \TAFEM come fully into their own in the presence of
singularities in time (see Section~\ref{sec:singtime}). For regular
(in time) problems, \TAFEM is expected to perform
similar to \ASTFEM up to the disadvantage that, at the beginning, the module {\TOLFIND}
needs several adaptive iterations over 
the time span, whereas the computation for the minimal time-step size
in \ASTFEM only iterates once over the time. This is reflected in the
comparable computing times for the jumping singularity problem.

\subsubsection{Rough initial data} We conclude with an example
inspired on the numerical experiment~5.3.2  
in~\cite{KrMoScSi:12} with homogeneous Dirichlet boundary
conditions and homogeneous right-hand side $f\equiv0$. As initial data we choose a checkerboard
pattern over $\Omega=(0,1)^2$ where $u_0\equiv-1$ on $\Omega_1=(\frac{1}{3},\frac{2}{3})\times
\left(  (0,\frac{1}{3})\cup(\frac{2}{3},1) \right)\ \cup \ \left(  (0,\frac{1}{3})\cup(\frac{2}{3},1) \right)
\times (\frac{1}{3},\frac{2}{3})$, $u_0\equiv1$ on 
$\Omega\setminus\Omega_1$ and $u_0\equiv0$ on $\partial\Omega$. 
Starting with an initial mesh with only 5 DoFs, the approximation of
$u_0$ uses Lagrange interpolation and refines 
the mesh until $\|U_0-u_0\|_\Omega^2\leq\TOLinit^2 = 10^{-2}$
is fulfilled. Starting \ASTFEM and \TAFEM 
with a tolerance of $\TOL=10^{-1}$ and running to the final
time $T=1$, we get the following results: \ASTFEM needs 811 time-steps, a total sum of $436\, 199\, 377$ DoFs, with an estimated total error
of 0.0230905, and a  total computation time of 81466.4 seconds. 
The \ASTFEM makes  use of the
nonstandard exit for the first 270 time-steps, with
minimal time-step size of $\tau_\ast=7.77573\mathrm{e}{-7}$, the small
size of the time-steps in the beginning is also accompanied by extreme
spatial refinements contributing to the large total number of
DoFs. This is due to the 
$L^\infty$-strategy that aims in an equal distributing of the time-indicators
$\frac1\tau \E{\tau}^2$ rather then $\E{\tau}^2$. In order to highlight this effect close to
the initial time, we have used a log scale for the time in
Figures~\ref{fig:ts-Ect-comp} and~\ref{fig:cb-DoF+TSS}.  
The \TAFEM only needs 117 time-steps and a total of $3\, 762\, 503$ DoFs
resulting in an estimated total error of 0.039855. It is about $20$
times faster with a  total computation time of 3903.76 seconds (including \TOLFIND). 
 \TAFEM refines the mesh initially and then almost steadily coarsens in
 time and space (see Figures~\ref{fig:cb-DoF+TSS} and~\ref{fig:adaptMesh2}~(E-H)). 
Figure~\ref{fig:ts-Ect-comp} shows that the time indicators
$\E{\tau}^2$ are nearly equally distributed. 
Both algorithms reduce the spatial resolution once the singular behaviour of the solution is
reduced; see Figures~\ref{fig:cb-DoF+TSS} and~\ref{fig:adaptMesh2}.

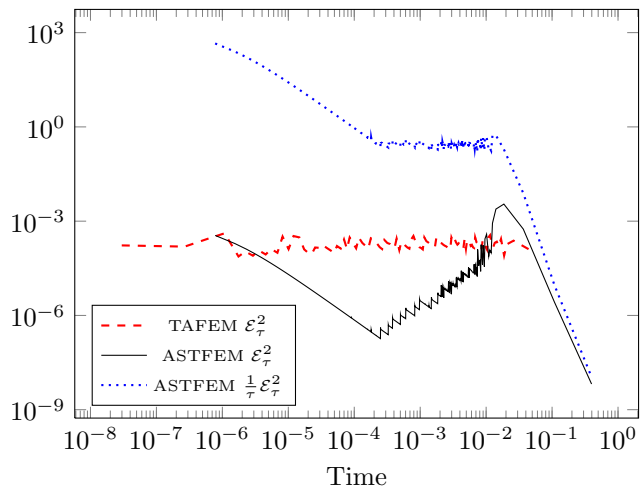
\begin{figure}
  \centering
     \begin{tikzpicture}
\begin{loglogaxis}[width = 9cm, height=7cm,xlabel={Time},legend
  style={legend pos=south west,font=\tiny}]
\addplot[red, dashed, thick,  mark={}] table[ x=Time, y expr=\thisrow{CT-Estimate2} ] {cb-logfile-tafem.dat};
\addlegendentry{\TAFEM $\E{\tau}^2$}
\addplot[black, solid, mark={}] table[x=Time, y expr=\thisrow{DeltaT}*\thisrow{CT-Estimate2}] {cb-logfile-astfem.dat};
\addlegendentry{\ASTFEM $\E{\tau}^2$}
\addplot[blue, dotted, thick, mark={}] table[x=Time, y expr=\thisrow{CT-Estimate2}] {cb-logfile-astfem.dat};
\addlegendentry{\ASTFEM $\frac1\tau \E{\tau}^2$}
\end{loglogaxis}
\end{tikzpicture}
  \caption{The local time indicator
    $\E{\tau}^2$ for \TAFEM and \ASTFEM as
    well as the local $L^\infty$ indicators
    $\frac1\tau\E{\tau}^2$ used by \ASTFEM
    for the rough
    initial data problem.}
\label{fig:ts-Ect-comp}
\end{figure}

\begin{figure}
\begin{subfigure}[c]{0.45\textwidth}
\subcaption{DoF}
\begin{tikzpicture}[scale=0.75]
\begin{loglogaxis}[xlabel={Time},legend style={legend pos=south west,font=\tiny}]
\addplot[black, solid, mark={}] table[ x=Time, y=Dofs ] {cb-logfile-astfem.dat};
\addlegendentry{\ASTFEM}
\addplot[red, dashed, thick, mark={}] table[x=Time, y=Dofs ] {cb-logfile-tafem.dat};
\addlegendentry{$\TAFEM$}
\end{loglogaxis}
\end{tikzpicture}
\end{subfigure}\qquad
\begin{subfigure}[c]{0.45\textwidth}
\subcaption{time-step size}
\begin{tikzpicture}[scale=0.75]
\begin{loglogaxis}[xlabel={Time},legend style={legend pos=north west,font=\tiny}]
\addplot[black, solid, mark={}] table[ x=Time, y=DeltaT ] {cb-logfile-astfem.dat};
\addlegendentry{\ASTFEM}
\addplot[red, dashed, thick, mark={}] table[x=Time, y=DeltaT ] {cb-logfile-tafem.dat};
\addlegendentry{$\TAFEM$}
\end{loglogaxis}
\end{tikzpicture}
\end{subfigure}
\caption{DoFs and time-step sizes for the rough initial data problem.}
\label{fig:cb-DoF+TSS}
\end{figure}
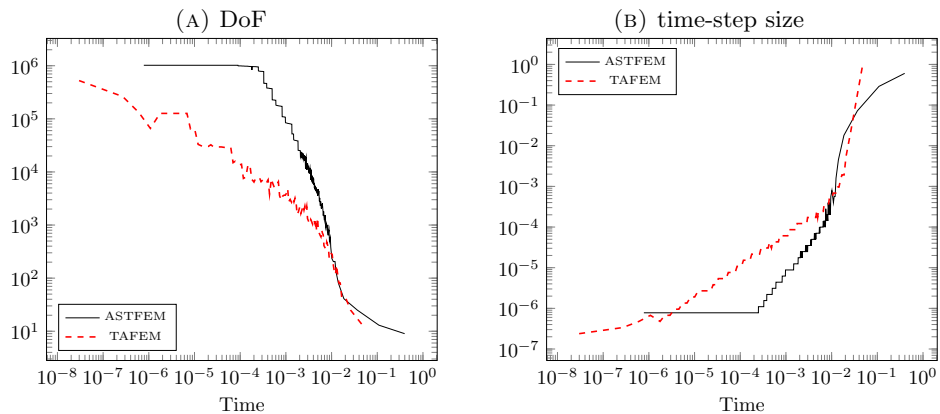
\begin{figure}
\begin{subfigure}[c]{0.22\textwidth}
\subcaption{$t\approx 0$}
\includegraphics[width=1.0\textwidth]{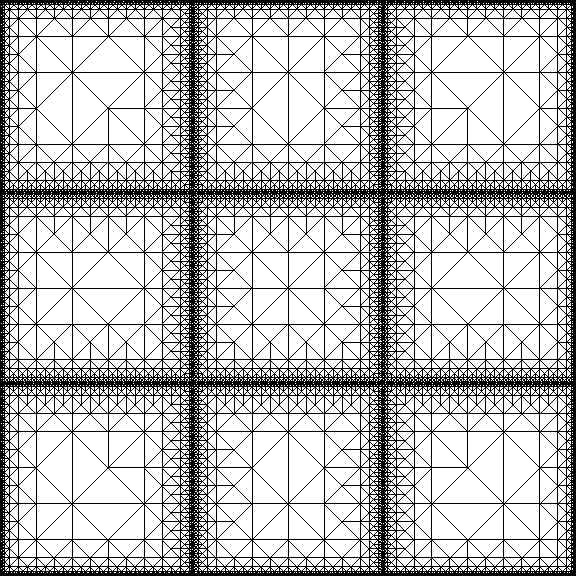}
\end{subfigure}
\quad
\begin{subfigure}[c]{0.22\textwidth}
\subcaption{$t\approx 0.0015$}
\includegraphics[width=1.0\textwidth]{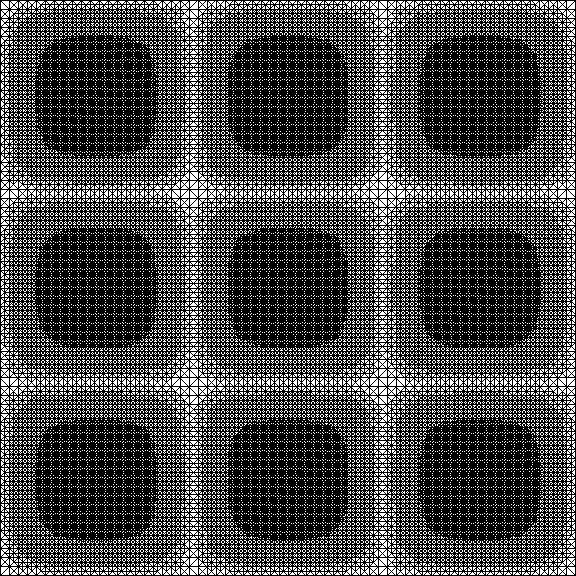}
\end{subfigure}
\quad
\begin{subfigure}[c]{0.22\textwidth}
\subcaption{$t\approx 0.0040$}
\includegraphics[width=1.0\textwidth]{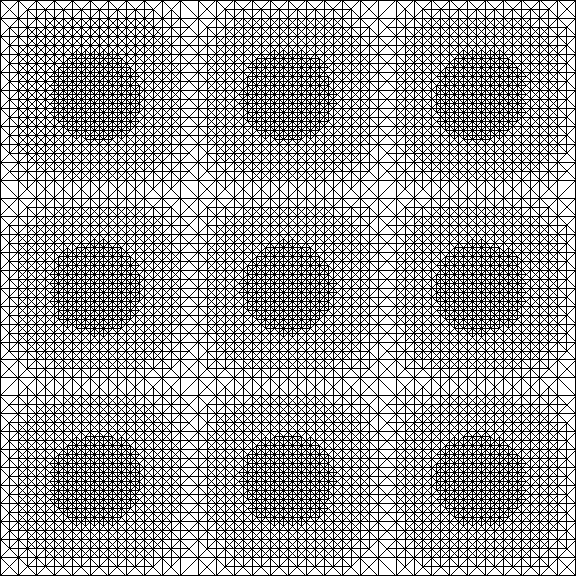}
\end{subfigure}
\quad
\begin{subfigure}[c]{0.22\textwidth}
\subcaption{$t\approx 0.040$}
\includegraphics[width=1.0\textwidth]{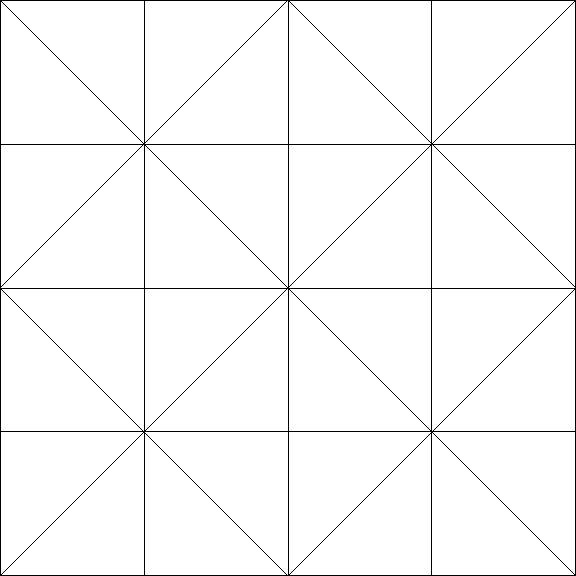}
\end{subfigure}
\begin{subfigure}[c]{0.22\textwidth}
\subcaption{$t\approx 0$}
\includegraphics[width=1.0\textwidth]{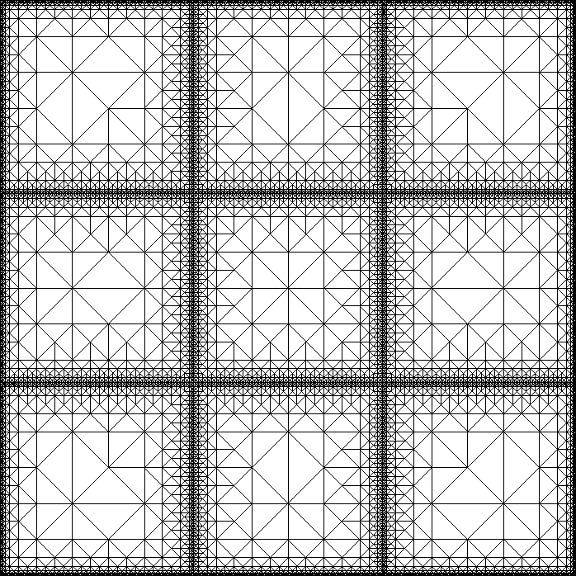}
\end{subfigure}
\quad
\begin{subfigure}[c]{0.22\textwidth}
\subcaption{$t\approx 0.00025$}
\includegraphics[width=1.0\textwidth]{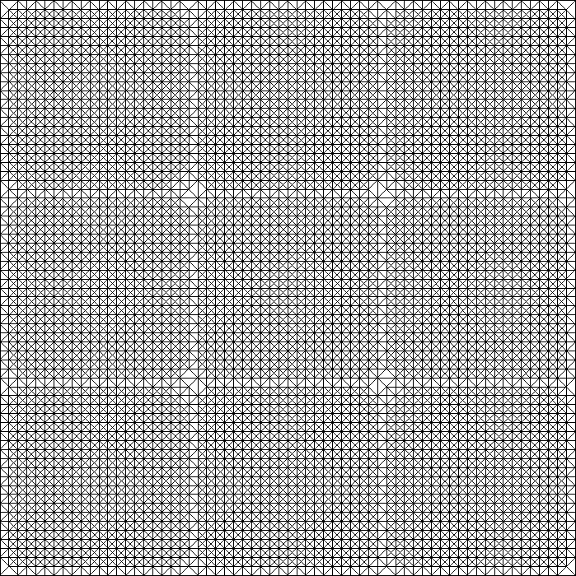}
\end{subfigure}
\quad
\begin{subfigure}[c]{0.22\textwidth}
\subcaption{$t\approx 0.0025$}
\includegraphics[width=1.0\textwidth]{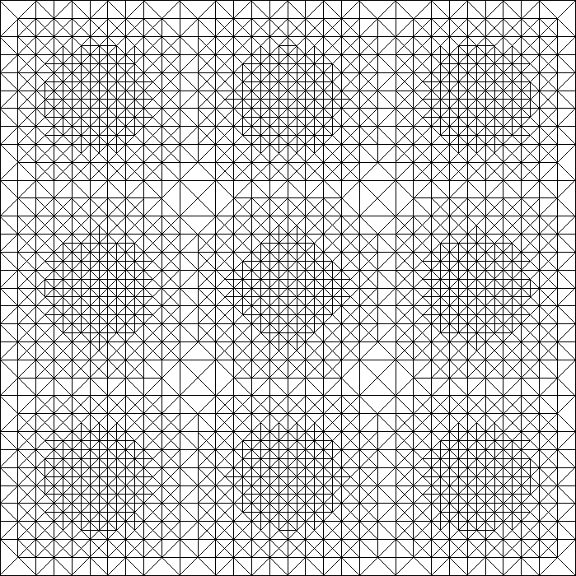}
\end{subfigure}
\quad
\begin{subfigure}[c]{0.22\textwidth}
\subcaption{$t\approx 0.047$}
\includegraphics[width=1.0\textwidth]{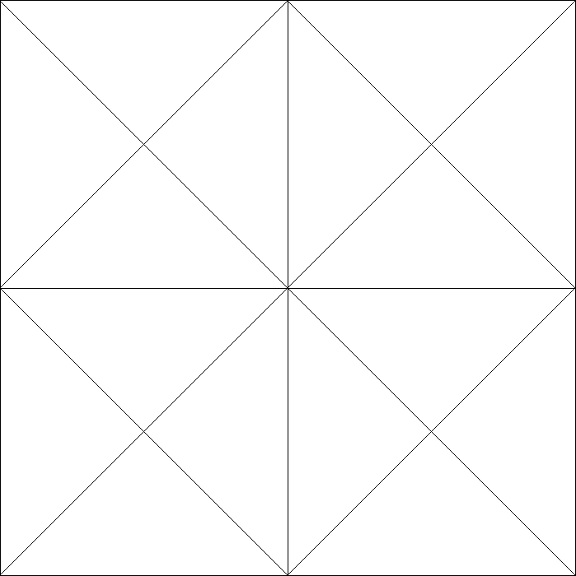}
\end{subfigure}
\caption{Adapted meshes generated with \ASTFEM (A-D) and \TAFEM (E-H) for the rough initial data problem.}
\label{fig:adaptMesh2}
\end{figure}

\newcommand{\etalchar}[1]{$^{#1}$}
\providecommand{\bysame}{\leavevmode\hbox to3em{\hrulefill}\thinspace}
\providecommand{\MR}{\relax\ifhmode\unskip\space\fi MR }
\providecommand{\MRhref}[2]{%
  \href{http://www.ams.org/mathscinet-getitem?mr=#1}{#2}
}
\providecommand{\href}[2]{#2}

\end{document}